\documentclass[11pt]{article}
\usepackage[totalwidth=13.0cm,totalheight=20.0cm]{geometry}
\usepackage{latexsym,amsthm,amsmath,amssymb,url}
\usepackage[ruled, linesnumbered]{algorithm2e}

\usepackage{tikz,authblk}
\usetikzlibrary{decorations.pathreplacing}

\newcommand{\epX}{\varepsilon}

\newcommand{\GG}[1]{{#1}}

\newcommand{\2}{\vspace{0.15cm}}

\newtheorem{theorem}{Theorem}[section]
\newtheorem{lemma}[theorem]{Lemma}
\newtheorem{corollary}[theorem]{Corollary}

\newtheorem{proposition}[theorem]{Proposition}

\newtheorem{conjecture}[theorem]{Conjecture}
\newtheorem{openProblem}[theorem]{Open Problem}

\begin{document}

\title{Lower Bounds for Maximum Weighted Cut\footnote{\GG{This is a post--publication (SIAM J. Discrete Math. 37(2) 2023) version of this paper, which was produced after we learned about some interesting results on Maximum Weighted Cut, which we were not aware of prior to the publication. }}}
\author[1]{Gregory Gutin}
\author[2,3]{Anders Yeo}
\affil[1]{Department of Computer Science, Royal Holloway, University of London, TW20 0EX, Egham, Surrey, UK, g.gutin@rhul.ac.uk}
\affil[2]{Center for Combinatorics and LPMC, Nankai University, Tianjin 300071, P.R. China}
\affil[3]{IMADA, University of Southern Denmark, Campusvej 55, 5230 Odense, Denmark, andersyeo@gmail.com} 
\affil[4]{Department of Mathematics, University of Johannesburg, Auckland Park, 2006 South Africa}

\date{}
\maketitle

\begin{abstract}
\noindent 
While there have been many results on lower bounds for Max Cut in unweighted graphs,  \GG{there are only few results for lower bounds for Max Cut in weighted graphs}. In this paper, 
we launch an extensive study of lower bounds for Max Cut in weighted graphs. We introduce a new approach for obtaining lower bounds for  Weighted Max Cut. Using it, Probabilistic Method, Vizing's chromatic index theorem,  and other tools,
we obtain several lower bounds for arbitrary weighted graphs, weighted graphs of bounded girth and triangle-free weighted graphs. We pose conjectures and open questions. 
\end{abstract}

\pagestyle{plain}

\section{Introduction}\label{sec:intro}

In this paper $G=(V(G),E(G),w)$ will denote a connected weighted graph with weight function $w:\ E(G) \to \mathbb{R}_+,$ where $\mathbb{R}_+$ is the set of non-negative reals. 
Let $A$ and $B$ be a partition of $V(G).$ Then the {\em cut} $(A,B)$ of $G$ is the bipartite subgraph of $G$ induced by the edges between $A$ and $B$. 
The {\sc Maximum Weighted Cut} problem (MWC) is a well-known {\sf NP}-hard optimization problem on graphs \cite{Karp72}, where given a weighted graph $G$, the aim is to find the maximum weight 
of a cut of $G$. This weight will be denoted by ${\rm mac}(G).$ 

Lower bounds for ${\rm mac}(G)$ are of interest e.g. for designing heuristics and branch-and-bound algorithms for computing ${\rm mac}(G).$
There are many publications where lower bounds on ${\rm mac}(G)$ have been studied. 
However, almost all of them are either for the unweighted case i.e. the weight of every edge equals 1 (see e.g. \cite{Alon96,AloKriSud05,BolSco02,CKLMST20,PolTuz86,Shearer92})
(In what follows, the weight of every edge is an unweighted graph will be equal to 1) or for graphs with integral weights (see e.g. \cite{AlonH98,BolSco02}). 

As far as we know, \GG{there are only three non-trivial lower bounds for the general weighted case: (a) Poljak and Turz{\'{\i}}k \cite{PolTuz86} proved that ${\rm mac}(G)\ge w(G)/2+w(T_{\min})/4,$ where $T_{\min}$ is a minimum weight spanning tree of $G$, (b) Haglin and Venkatesan \cite{HaglinVenk1991}  showed that ${\rm mac}(G)\ge (\frac{1}{2} + \frac{1}{2n})w(G)/2,$ where $n=|V(G)|$,
(c) He, Zhang and Zhang \cite{HeZZ2010} proved that ${\rm mac}(G)\ge (\frac{1}{2} + \frac{1}{400\sqrt{m}})w(G),$ where $m=|E(G)|$.
}
Note that here and in the rest of the paper, for a subgraph $H$ of $G$, $w(H)=\sum_{e\in E(H)}w(e).$ 

In Section \ref{sec:gb}, we introduce a generic lower bound for  ${\rm mac}(G)$ and show that the Poljak-Turz{\'{\i}}k bound can be easily obtained from the generic bound. We prove that unfortunately the lower bound is {\sf NP}-hard to compute. However, the bound can be used to obtain  other lower bounds which are computable in polynomial time, see  Theorems \ref{dfs},  \ref{girth}, \ref{tfree}, \ref{mainProb}, Lemma \ref{mainMatch}, Proposition \ref{match} \GG{and Corollary \ref{HVineq}}.

In Section \ref{sec:ag}, we prove that the Poljak-Turz{\'{\i}}k bound can be improved by replacing a minimum weight spanning tree by a 
DFS tree (i.e. a tree that can be obtained by using a depth-first search algorithm): 
${\rm mac}(G)\ge w(G)/2+w(D)/4,$ where $D$ is a DFS tree of $G.$
Theorem \ref{notANYtree} shows that we cannot replace $D$ in the new bound (called below the {\em DFS bound}) by an arbitrary spanning tree. 
 The DFS tree bound is stronger then the Poljak-Turz{\'{\i}}k bound because while the Poljak-Turz{\'{\i}}k bound requires the spanning tree to be of minimum weight, for the DFS bound we can use an arbitrary DFS tree. We also prove that unfortunately replacing an arbitrary DFS tree with a DFS tree of maximum weight  would make the bound no longer computable in polynomial time unless ${\sf P}={\sf NP}.$ The last result holds even for triangle-tree tree graphs studied later in the paper. We complete Section \ref{sec:ag} by observing another new bound: ${\rm mac}(G)\ge (w(G)+w(M))/2,$ where $M$ is a maximum weight matching of $G.$ 
 
In Section \ref{sec:boundedgirth}, we study graphs of bounded girth. The girth of {a} graph is the length of its shortest cycle. We show that if the girth $g$ of $G$ is even then {${\rm mac}(G)\ge \frac{w(G)}{2} + \frac{g-1}{2g} w(D_{\max})$, where $D_{\max}$ is a maximum weight DFS tree of $G$.} This bound can be extended to the case where the girth is odd by replacing $g$ with $g-1.$ We also prove that when $G$ is  triangle-free then the Poljak-Turz{\'{\i}}k bound can be improved as follows: ${\rm mac}(G)\ge w(G)/2+w(T_{\max})/4,$ where $T_{\max}$ is a maximum weight spanning tree. This is in sharp contrast with Theorem \ref{notANYtree}, which shows that the last bound does not hold  for arbitrary graphs. 
{Note that $w(T_{\max})$ can be computed in polynomial time while it is {\sf NP}-hard to compute $w(D_{\max}).$
We complete the section by a conjecture that ${\rm mac}(G)\ge \frac{w(G)}{2} +\frac{3}{8} \cdot w(T)$ for a triangle-free graph $G$ and  a spanning tree $T$ of $G.$ 

Section \ref{sec:conj} is devoted to triangle-free graphs $G$ with bounded maximum degree $\Delta(G).$ In Subsection \ref{sec:3} we study triangle-free graphs $G$ with $\Delta(G)\le 3.$ Inspired by the result of Bondy and Locke \cite{BoLO86} that a triangle-free graph $G$ with $\Delta(G) \leq 3$ has a  bipartite subgraph with at least $\frac{4}{5}|E(G)|$ edges, we conjecture that ${\rm mac}(G)\ge 4w(G)/5$ for a weighted triangle-free graph $G$ with $\Delta(G) \leq 3$ (see Conjecture~\ref{MAINconjII}). 
Theorem \ref{mainProb} proved in Section \ref{sec:proofs} shows that ${\rm mac}(G) \geq \frac{8}{11} \cdot w(G)$ for a weighted triangle-free graph $G$ with $\Delta(G) \leq 3.$  Theorem \ref{mainProb} allows us to prove Theorem \ref{mainProbTree} (also proved in Section \ref{sec:proofs}) which states that ${\rm mac}(G) \geq \frac{w(G)}{2} + 0.3193 \cdot w(T)$ for a triangle-free graph $G$ with $\Delta(G) \leq 3$ and a spanning tree $T$ of $G.$ We show that Conjecture~\ref{MAINconjII} implies the conjecture of Section \ref{sec:boundedgirth} for triangle-free graphs of maximum degree at most 3 as well as Conjecture~\ref{5cycles}, which states that every triangle-free graph $G$ with $\Delta(G)\le 3$ has an edge set $E'$ such that every 5-cycle of $G$ contains exactly one edge from $E'.$ Thus, if Conjecture~\ref{MAINconjII}  holds, it implies a somewhat unexpected structural result for unweighted graphs.

Subsection \ref{sec:Delta} is devoted to triangle-free graphs $G$ with maximum degree bounded by arbitrary $\Delta.$ The main results of this section are Theorems \ref{Shearer} and \ref{th:Y}, which give different bounds of the type ${\rm mac}(G)\ge a_{\Delta}\cdot w(G),$ where $a_{\Delta}$ depends only on $\Delta.$ The proof of Theorem \ref{Shearer} easily follows from results of Shearer \cite{Shearer92}.
The bound of Theorem \ref{th:Y} is stronger than that of Theorem \ref{Shearer} if and only if $\Delta\le 16.$

We conclude the paper in Section \ref{sec:c}.

Our proofs rely in particular on the Probabilistic Method and Vizing's chromatic index theorem.

\section{Generic Bound}\label{sec:gb}

The following theorem is a generic bound, which is used in the next section to obtain new lower bounds for ${\rm mac}(G)$. 
These new lower bounds immediately imply a well-known lower bound of Poljak and Turz{\'{\i}}k \cite{PolTuz86}. 

Let ${\cal B}(G)$ denote the set of bipartite subgraphs $R$ of $G$ such that every connected component of $R$ is an induced subgraph of $G.$ Every graph in ${\cal B}(G)$ is called a $\cal B$-{\em subgraph}  of $G.$

\begin{theorem} \label{gen}
If $R\in {\cal B}(G)$, then ${\rm mac}(G)\ge (w(G)+w(R))/2.$ 
\end{theorem}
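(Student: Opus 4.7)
The plan is a direct application of the probabilistic method. First I would observe that we may assume $V(R)=V(G)$: any vertex of $G$ missing from $R$ can be added to $R$ as an isolated vertex, which is trivially a bipartite induced subgraph on its own, so membership in $\mathcal{B}(G)$ and the value of $w(R)$ are unaffected. Let $C_1,\dots,C_t$ be the connected components of $R$. Since $R$ is bipartite, each $C_i$ has a bipartition $(A_i,B_i)$. I would then construct a random cut $(X,Y)$ of $G$ by flipping $t$ independent fair coins: for each $i$, with probability $1/2$ place $A_i\subseteq X$ and $B_i\subseteq Y$, and with probability $1/2$ swap these assignments.

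The heart of the proof is a two-case analysis of the probability that a given edge $e=uv\in E(G)$ is cut. If $e\in E(R)$, then $u$ and $v$ lie in the same component $C_i$ on opposite sides of $(A_i,B_i)$, so $u$ and $v$ always end up on opposite sides of $(X,Y)$; hence $e$ is cut with probability $1$. If $e\notin E(R)$, then the assumption that every component of $R$ is an \emph{induced} subgraph of $G$ forces $u$ and $v$ to lie in different components $C_i\neq C_j$: otherwise $uv$ would have been an edge of the induced subgraph $C_i$ and hence of $R$. The coin flips for $C_i$ and $C_j$ are independent, so $e$ is cut with probability exactly $1/2$.

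By linearity of expectation, the expected weight of $(X,Y)$ equals
$$w(R)\cdot 1 + (w(G)-w(R))\cdot\tfrac{1}{2} \;=\; \frac{w(G)+w(R)}{2},$$
and therefore some outcome achieves a cut of at least this weight, giving ${\rm mac}(G)\ge (w(G)+w(R))/2$. There is no real obstacle here; the only subtle point is the non-$R$ case, and it is exactly the induced-component hypothesis in the definition of $\mathcal{B}(G)$ that does the work, guaranteeing the two relevant coin flips are independent. Without that hypothesis one could have an edge $uv$ of $G$ with both endpoints in a single component of $R$ but not in $E(R)$, which could be forced to be uncut by the bipartition and break the bound.
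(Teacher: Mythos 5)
Your proposal is correct and follows essentially the same argument as the paper: independent random orientation of each component's bipartition, edges of $R$ cut with probability $1$, and edges outside $R$ cut with probability $1/2$ because the induced-component hypothesis forces their endpoints into distinct components. Your explicit handling of vertices outside $V(R)$ and your remark on why the induced hypothesis is needed are minor elaborations of the same proof.
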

\begin{proof}
Let $R_1,R_2,\dots ,R_{\ell}$ be connected components of $R$ and let $X_i,Y_i$ be partite sets of $R_i$, $i\in [\ell].$ For each $i\in [\ell],$ randomly and uniformly assign $X_i$ color 1 or 2 and $Y_i$ the opposite color. 
{Note that this is a proper coloring of $R.$}
Let $A$ be all vertices of color 1 and let $B$ be all vertices of color 2.  Now every edge in $R$ {deterministically} lies in the cut induced by $(A,B)$ and every edge not in $R$ lies in the cut induced by $(A,B)$ with probability 1/2.  Therefore the average weight of the cut $(A,B)$ is  $w(R) + w(E(G)-E(R))/2 = (w(G) +w(R))/2.$ Thus, ${\rm mac}(G)\ge (w(G) +w(R))/2.$ 
\end{proof}
Using the well-known derandomization method of conditional probabilities \cite[Section 15.1]{AloSpe}, given $R\in {\cal B}(G)$, in polynomial time we can find a cut of $G$ of weight at least $(w(G) +w(R))/2.$ Note that, in the definition of ${\cal B}(G)$,  the requirement that every connected component of $R$ is an induced subgraph of $G$, is necessary as otherwise the term $w(E(G)-E(R))/2$ in the 
average weight of the cut $(A,B)$ is incorrect. 

Let $r_{\max}$ be the maximal weight of a $\cal B$-subgraph of $G$. By Theorem \ref{gen}, ${\rm mac}(G)\ge (w(G) + r_{\max})/2.$ Unfortunately,
it is {\sf NP}-hard to compute $r_{\max}$, which follows from the next theorem.

\begin{theorem} \label{NPbs}
Let $H$ be an unweighted graph. It is {\sf NP}-hard to compute the maximum number of edges in a $\cal B$-subgraph of $H.$
\end{theorem}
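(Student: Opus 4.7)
The plan is to reduce from a known \textsf{NP}-hard problem via a gadget construction. First, I would reformulate the problem: since connected components of a bipartite graph are themselves bipartite, and by definition each component of an $\mathcal{B}$-subgraph is an induced subgraph of $H$, any $R \in \mathcal{B}(H)$ corresponds to a partition $V(H) = V_1 \sqcup V_2 \sqcup \cdots$ with each $H[V_i]$ bipartite, satisfying $|E(R)| = \sum_i |E(H[V_i])|$. Equivalently, computing the maximum number of edges in a $\mathcal{B}$-subgraph of $H$ amounts to finding a partition of $V(H)$ into bipartite-inducing blocks that minimizes the number of edges running between distinct blocks. This clean combinatorial reformulation is where I would start.

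A natural source problem for the reduction is \textsc{Max Cut}, which is \textsf{NP}-hard for unweighted graphs even on cubic graphs. Given an instance $F$ of \textsc{Max Cut}, I would construct an auxiliary graph $H$ by replacing each edge $uv \in E(F)$ with a small edge-gadget, for example a copy of $K_4$ on $u$, $v$, and two fresh vertices $p_{uv}, q_{uv}$. The key property I would try to enforce is that the local $\mathcal{B}$-subgraph choice inside each gadget offers a discrete option corresponding to whether the endpoints $u, v$ lie in the same part or in different parts of a global $2$-coloring of $V(F)$. Once integrated across all gadgets, the total internal-edge count of the $\mathcal{B}$-subgraph of $H$ should be an affine function of the cut size of $F$, so that maximizing one maximizes the other.

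The main obstacle will be enforcing \emph{global consistency} of the local gadget choices. Since each vertex $u \in V(F)$ appears in several gadgets (one per incident $F$-edge), the independent local choices must be forced to agree on a single $2$-coloring of $V(F)$. This requires a combinatorial consistency lemma showing that in an optimal $\mathcal{B}$-subgraph of $H$ the gadgets' choices are simultaneously realizable only when they arise from a single global bipartition of $V(F)$, with inconsistent configurations incurring a strictly larger edge-count loss. A further subtlety is that $r_{\max}$ is generally strictly larger than the weight of a maximum induced bipartite subgraph (for instance $r_{\max}(K_4)=2$ while no induced bipartite subgraph of $K_4$ has more than one edge), so the gadget must be tuned to prevent the extra partitioning flexibility from producing alternative optima that bypass the cut structure. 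Calibrating gadget sizes, ruling out such ``cheating'' partitions by case analysis, and verifying the final affine relation between optimum $\mathcal{B}$-subgraph weight and ${\rm mac}(F)$ is the bulk of the technical work; once in place, the claimed \textsf{NP}-hardness follows directly.
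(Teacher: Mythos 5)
Your reformulation of the problem as a partition of $V(H)$ into blocks each inducing a bipartite subgraph, maximizing the number of internal edges, is correct and a sensible starting point. But what follows is a plan rather than a proof, and the one concrete ingredient you commit to --- the $K_4$ edge gadget --- does not work. In any admissible partition no block may contain three of the four gadget vertices $u,v,p_{uv},q_{uv}$ (they would induce a triangle), so the gadget contributes at most two internal edges, and this maximum of two is attained by \emph{every} perfect pairing of the four vertices: $\{u,v\},\{p_{uv},q_{uv}\}$ just as well as $\{u,p_{uv}\},\{v,q_{uv}\}$. The gadget's contribution is therefore the same whether $u$ and $v$ land in the same block or in different blocks, so the intended affine relation to ${\rm mac}(F)$ cannot materialize. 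Beyond that, the two steps you yourself flag as the bulk of the work --- the consistency lemma forcing the many blocks of an optimal partition to collapse to a single $2$-coloring of $V(F)$, and the exclusion of ``cheating'' partitions --- are exactly the content of the proof and are left entirely open; since $\cal B$-subgraphs may have arbitrarily many components, ``same side versus opposite side'' is not even well defined without substantial extra machinery.

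The paper's proof avoids all of this with a much simpler reduction, from \textsc{Independent Set} rather than \textsc{Max Cut}. Given an $n$-vertex instance $H$, add an independent set $Q$ of $n$ new vertices joined completely to $V(H)$. An independent set $I$ of size $k$ in $H$ yields the induced complete bipartite component on $I\cup Q$, a $\cal B$-subgraph with $kn$ edges; conversely, since every vertex of $Q$ is adjacent to every vertex of $H$, no partite set of a $\cal B$-subgraph can meet both $Q$ and $V(H)$, and a counting argument ($|X|\cdot|Y|\ge kn$ with $|Y|\le n$) forces a partite set of size at least $k$ inside $V(H)$, i.e.\ an independent set of size $k$. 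The lesson is to reduce from a problem whose witness is a single induced structure, which makes the global-consistency issue disappear entirely.
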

\begin{proof}
The proof is by reduction from the {\sc Independent Set} problem. In this problem, given a graph $F$ and a natural number $k$,  we are to decide whether $F$ contains an independent set of size at least $k.$ It is well-known that {\sc Independent Set} is {\sf NP}-complete.

{Let $F$ be an instance of the {\sc Independent Set} problem where we want to determine if $F$ has an independent set of size $k$.
Let $n$ be the number of vertices in $F$ and let $Q$ be a set of $n^2$ vertices outside of $F$. 
Construct a new graph $H$ by adding all edges between $F$ and $Q$.
We will show that $H$ has a $\cal B$-subgraph with at least $kn^2$ edges if and only if $F$ contains an independent set of size at least $k$. Indeed, 
if $F$ contains an independent set $I$ of size at least $k$, then $H$ has a $\cal B$-subgraph with at least $kn^2$ edges, as the subgraph induced by $I \cup Q$ is such a subgraph.

Conversely, assume that $H$ has a $\cal B$-subgraph, $B$, with at least $kn^2$ edges. As at most ${n \choose 2} < n^2$ edges in $E(B)$ belong to $E(F)$ 
we must have at least $(k-1)n^2 +1$ edges in $E(B)$ belonging to the cut $(V(F),Q)$.
Now let $C_1,C_2,\ldots,C_l$ be the connected components of $B$ that contain vertices from $Q$ 
and let $X_i = V(F) \cap V(C_i)$ for all $i \in [l]$. Note that for all $i \in [l]$, $X_i$ is an independent set in $F$, as if some edge $uv$ belonged to $F[C_i]$ then $u$, $v$ and any vertex from $Q \cap V(C_i)$ would form a $3$-cycle in $B$. Let $x^{\max} = \max\{|X_i| \; | \; i \in [l] \}$ and note that no vertex in $Q$ is incident with more than $x^{\max}$ edges in $E(B)$. As there are at least $(k-1)n^2 +1$ edges in $E(B)$ belonging to the cut $(V(F),Q)$, this implies that $x^{\max}>k-1$ which implies that $F$ has an independent set
of size at least $k$.
This completes the proof.}
\end{proof}

The following is a simple corollary of Theorem \ref{gen} as a matching in $G$ is a $\cal B$-{\em subgraph}  of $G.$
\begin{proposition}\label{match}
Let $M$ be a maximum weight matching of $G$. Then ${\rm mac}(G)\ge (w(G)+w(M))/2.$ 
\end{proposition}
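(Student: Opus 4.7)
My plan is to derive Proposition~\ref{match} as an immediate corollary of Theorem~\ref{gen}, by showing that any matching of $G$ qualifies as a $\cal B$-subgraph. The first step is to take a maximum weight matching $M$ and view it as the subgraph $R$ of $G$ with edge set $M$ (taking, say, only the vertices covered by $M$; isolated vertices do not affect what follows). The connected components of $R$ are then precisely the individual edges of $M$, each isomorphic to $K_2$.

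Next, I would verify the two conditions for membership in $\cal B(G)$. Each component $K_2$ is trivially bipartite, with its two endpoints as the color classes. Moreover, because $G$ is a simple graph there is at most one edge between any pair of vertices, so the subgraph of $G$ induced on the endpoints of any matching edge $uv$ is exactly that edge; hence every component of $R$ is an induced subgraph of $G$. Together, these observations give $R\in\cal B(G)$.

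Finally, applying Theorem~\ref{gen} to $R$ yields ${\rm mac}(G)\ge (w(G)+w(R))/2 = (w(G)+w(M))/2$, which is precisely the claim. There is essentially no obstacle here: the whole argument is unpacking definitions, and the only point worth a moment's care is the ``induced component'' clause in the definition of $\cal B(G)$, which is automatic for matchings since each component has only two vertices.
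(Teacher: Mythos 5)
Your proposal is correct and matches the paper's argument exactly: the paper also derives Proposition~\ref{match} as an immediate corollary of Theorem~\ref{gen}, observing that a matching is a $\cal B$-subgraph of $G$. Your extra care with the ``induced component'' clause is sound, since each component is a single edge on two vertices.
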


The bound of Proposition \ref{match} is tight. Indeed, let $n$ be an even positive integer and let $K_n$ be unweighted. Then  clearly ${\rm mac}(K_n)=n^2/4.$ Also, $(w(K_n)+w(M))/2=\frac{1}{2}{n\choose 2}+n/4=n^2/4.$

It is not hard to construct examples of weighted graphs for which the bound of Proposition \ref{match} is larger than the Poljak-Turz{\'{\i}}k bound; e.g. consider a weighted graph with a spanning tree of 
weight zero and at least one edge of positive weight. 

\GG{Proposition \ref{match} allows us to easily prove the weighted version of the lower bound of Haglin and Venkatesan \cite{HaglinVenk1991}.}

\GG{
\begin{corollary}\label{HVineq}
Let $n=|V(G)|$. Then ${\rm mac}(G)\ge (\frac{1}{2}+\frac{1}{2n})w(G).$ 
\end{corollary}
\begin{proof}
Let $G^*$ be a complete graph obtained from $G$ by adding edges of weight zero between non-adjacent vertices in $G$. Then the average weight of an edge in $G^*$ is $2w(G)/(n(n-1))$. Thus, the expectation of the weight of a maximal matching in $G^*$ (with at least $(n-1)/2$ edges) is at least $w(G)/n$. Choose a maximal matching of weight at least $w(G)/n$ and delete from it all zero-weight edges added in the process of constructing $G^*$. We obtain a matching $M$ of $G$ of weight at least $w(G)/n$.
Now the bound of Proposition \ref{match} implies ${\rm mac}(G)\ge (\frac{1}{2}+\frac{1}{2n})w(G).$
\end{proof}
}

\GG{Haglin and Venkatesan \cite{HaglinVenk1991} conjectured that deciding whether for an unweighted $G$, ${\rm mac}(G)\ge (\frac{1}{2}+\frac{1}{n})m$ is  {\sf NP}-hard, where $m=|E(G)|.$}

\section{New Bounds for Arbitrary {Connected} Graphs}\label{sec:ag}

{Throughout this section, $G$ is connected.}
A {\em DFS tree} is a tree constructed by Depth First Search \cite{CLRS09}. 

\begin{theorem}\label{dfs}
If $D$ is a DFS tree of $G$ then ${\rm mac}(G)\ge w(G)/2+w(D)/4.$
\end{theorem}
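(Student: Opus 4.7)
The plan is to derive the bound from the generic bound of Theorem~\ref{gen}: it suffices to exhibit a $\mathcal{B}$-subgraph $R$ of $G$ with $w(R)\ge w(D)/2$, since then ${\rm mac}(G)\ge (w(G)+w(R))/2 \ge w(G)/2+w(D)/4$.

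To build $R$, I would root $D$ at the vertex from which the DFS started and partition $E(D)$ by the parity of the depth of the parent endpoint of each tree edge: let $E_1$ consist of the edges whose parent has even depth and $E_2$ those whose parent has odd depth. Since this is a partition of $E(D)$, at least one of $w(E_1),w(E_2)$ is $\ge w(D)/2$; take $R$ to be the spanning subgraph of $G$ whose edge set is whichever of $E_1,E_2$ is heavier. The connected components of $R$ are then stars of the form $\{v\}\cup \mathrm{children}_D(v)$, as $v$ ranges over the vertices of the relevant depth parity (with isolated vertices counted as trivial components). Each such star is bipartite, so $R$ is bipartite.

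The crucial step, and the only place the DFS hypothesis is genuinely used rather than merely treeness, is to show that every component of $R$ is an \emph{induced} subgraph of $G$. A star component $\{v\}\cup \mathrm{children}_D(v)$ fails to be induced only if $G$ contains an edge between two siblings in $D$, i.e.\ between two children $u,u'$ of a common parent $v$. But that cannot occur in a DFS tree of an undirected graph: assuming DFS visited $u$ before $u'$, the still-unvisited vertex $u'$ would be reached via the edge $uu'$ during the exploration of the subtree rooted at $u$ and would be placed inside that subtree, contradicting $u'$ being a child of $v$. Hence no star component contains any edge of $G$ beyond its tree edges, and so $R\in \mathcal{B}(G)$.

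Applying Theorem~\ref{gen} to this $R$ then yields ${\rm mac}(G)\ge (w(G)+w(R))/2\ge w(G)/2+w(D)/4$. The main obstacle is articulating the no-sibling-edge property of undirected DFS cleanly; everything else is a short accounting of edge weights.
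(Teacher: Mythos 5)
Your proof is correct and follows essentially the same route as the paper: both decompose $E(D)$ by the parity of the depth of the parent endpoint into two edge sets whose components are stars, use the no-cross-edge property of DFS (in your phrasing, that siblings cannot be adjacent) to conclude these are $\mathcal{B}$-subgraphs, and apply Theorem~\ref{gen}. The only cosmetic difference is that you keep the heavier of the two sets while the paper averages the two resulting inequalities; these are equivalent.
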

\begin{proof}
Let $D$ be rooted at vertex $u.$ Let $L_i$ be the set of vertices of $D$ at distance $i$ from $u$ in $D$.
Let $H_i$ be the subgraph of $D$ induced by the set of edges of $D$ 
between vertices $L_i$ and $L_{i+1}.$ {Note that each $L_j$ is an independent set of $G.$
Since $D$ is a DFS tree, $G$ has no cross edges with respect to $D$} i.e. edges $xy$ such that $x$ is not a descendent of $y$ and  $y$ is not a descendent of $x$ {in $D$} \cite{CLRS09}. Hence, each $H_j$ is an induced bipartite subgraph of $G$.
Let $G_1$ be the disjoint union of graphs $H_i$ with odd $i$ and $G_2$ the disjoint union of graphs $H_i$ with even $i.$
{Since $G_j$ ($j\in \{0,1\}$) is a disjoint union of graphs from ${\cal B}(G)$, we have $G_0,G_1\in {\cal B}(G).$}
Hence, by Theorem \ref{gen}, ${\rm mac}(G)\ge (w(G)+w(G_j))/2$ for $j=0,1.$ These bounds and $w(G_1)+w(G_2)=w(D)$ imply ${\rm mac}(G)\ge w(G)/2+w(D)/4.$
\end{proof}

Note that the above bound immediately implies the following corollary, by Poljak and Turz{\'{\i}}k (as $w(D) \geq w(T_{\min})$ when $D$ is a DFS tree and 
$T_{\min}$ is a minimum weight spanning tree of $G$).

\begin{corollary} \label{PT}
Let $T_{\min}$ be a minimum weight spanning tree of $G.$ Then ${\rm mac}(G)\ge w(G)/2+w(T_{\min})/4.$
\end{corollary}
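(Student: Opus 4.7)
The proof is essentially immediate from Theorem \ref{dfs}, and the parenthetical hint in the sentence preceding the corollary already spells out the key observation. Here is how I would organize the plan.

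First, I would invoke Theorem \ref{dfs}: construct any DFS tree $D$ of the connected weighted graph $G$ (starting from an arbitrary root), which is guaranteed to exist since $G$ is connected. By Theorem \ref{dfs}, this yields the inequality ${\rm mac}(G) \ge w(G)/2 + w(D)/4$.

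Second, I would compare $w(D)$ with $w(T_{\min})$. Since every DFS tree is by construction a spanning tree of $G$, and $T_{\min}$ is a spanning tree of minimum total weight among all spanning trees of $G$, we immediately get $w(D) \ge w(T_{\min})$. Substituting this into the bound from Theorem \ref{dfs} gives ${\rm mac}(G) \ge w(G)/2 + w(T_{\min})/4$, which is the desired inequality.

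There is no real obstacle here: the corollary is a one-line consequence of Theorem \ref{dfs} together with the defining minimality property of $T_{\min}$. The only thing worth emphasizing is that we do not need $D$ to equal $T_{\min}$ (and in general it will not be); the bound is monotone in the weight of the tree used, so any spanning tree obtained by DFS suffices to recover the Poljak--Turzík bound. I would keep the proof to two or three sentences exactly mirroring the parenthetical remark already in the text.
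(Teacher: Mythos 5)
Your proposal is correct and is exactly the paper's argument: apply Theorem~\ref{dfs} to an arbitrary DFS tree $D$ and use $w(D)\ge w(T_{\min})$, which holds since $D$ is a spanning tree and $T_{\min}$ has minimum weight among spanning trees. Nothing is missing.
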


As some DFS trees may have much larger weight than a minimum weight spanning tree of $G$, 
the bound of Theorem \ref{dfs} is, in general, stronger than that of Poljak and Turz{\'{\i}}k. 
The following theorem implies that $D$ cannot be replaced by an arbitrary spanning tree $T$ in Theorem~\ref{dfs}.

\begin{theorem}\label{notANYtree}
Let $\epX{}>0$ be arbitrary.  There exists (infinitely many) edge-weighted graphs $G$ with a spanning tree 
$T$ such that
${\rm mac}(G) < w(G)/2+ \epX{} w(T).$
\end{theorem}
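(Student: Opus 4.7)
The plan is, given $\varepsilon>0$, to exhibit an infinite family $\{G_k\}_{k\geq k_0}$ of weighted graphs with spanning trees $T_k$ satisfying the required strict inequality. I would take $T_k$ to be a \emph{star} --- a tree maximally far from being a DFS tree, since any two leaves are siblings under every rooting and so any non-tree edge between two leaves is a cross edge --- paired with a light clique on the leaves whose weight is tuned so that the maximum cut of $G_k$ cannot exploit the heavy star to a significant extent.

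Concretely, I would set $G_k$ on vertex set $\{c,\ell_1,\ldots,\ell_{k^2}\}$, giving each star edge $c\ell_i$ weight $1$ and each leaf-leaf edge $\ell_i\ell_j$ ($1\leq i<j\leq k^2$) weight $1/k$, and take $T_k=\{c\ell_i : i\in[k^2]\}$, so $w(T_k)=k^2$. To compute ${\rm mac}(G_k)$, I would observe that by swapping sides if necessary any cut may be assumed to place $c$ on one side together with $a\in\{0,\ldots,k^2\}$ leaves; the cut weight then equals
\[
(k^2 - a)\cdot 1 + a(k^2-a)\cdot\tfrac{1}{k} \;=\; (k^2-a)\!\left(1+\tfrac{a}{k}\right),
\]
a downward quadratic in $a$ maximized at $a^\ast=(k^2-k)/2$ (an integer, since $k^2-k$ is even), giving ${\rm mac}(G_k) = k(k+1)^2/4$.

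Combining this with $w(G_k) = k^2 + \binom{k^2}{2}/k = k^2 + (k^3-k)/2$, a short algebraic check simplifies ${\rm mac}(G_k)-w(G_k)/2$ to $k/2$, so
\[
\frac{{\rm mac}(G_k)-w(G_k)/2}{w(T_k)} \;=\; \frac{1}{2k},
\]
which is $<\varepsilon$ as soon as $k>1/(2\varepsilon)$, yielding infinitely many examples as $k$ ranges over sufficiently large integers.

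The main obstacle will be guessing the right weight scale. Treating the leaf-leaf weight as a free parameter $w$ and optimizing the cut formula over $a$, the excess reduces to $\tfrac{k^2 w}{4} + \tfrac{1}{4w}$, which by AM--GM is minimized exactly at $w=1/k$, with value $k/2$. The resulting $\Theta(1/\sqrt{|V|})$ ratio is precisely what forces the bound to fail for any $\varepsilon$; any coarser choice of $w$ would leave the ratio bounded below by a positive constant, and selecting the number of leaves to be a perfect square $k^2$ is what makes $a^\ast$ an integer so that the formula for ${\rm mac}(G_k)$ is attained exactly.
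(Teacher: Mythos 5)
Your construction is correct and is essentially the paper's own: up to rescaling all weights by $k$, your graph is the complete graph $K_{k^2+1}$ with a heavy star at one vertex and unit weights elsewhere, which is exactly the paper's example with $W=k$ and $l=k^2$ (and your threshold $k>1/(2\epX)$ matches the paper's condition $l>W^2/(4W\epX-1)$ at these parameters). The only difference is that you compute ${\rm mac}(G_k)$ exactly by symmetry, whereas the paper only upper-bounds it by splitting the heavy vertex into an independent set; both analyses are valid.
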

\begin{proof}
Let $W$ be any positive integer strictly larger than $\frac{1}{4\epX{}}.$
Let $l$ be any integer such that:
\[
 l > \frac{W^2}{4W\epX{} -1}.
\]
Let $G = K_{l+1}$ and let $x \in V(G)$ be arbitrary. Let $w:E(G) \rightarrow \mathbb{N}$ be a weight function
such that $w(e)=W$ if $e$ is an edge incident with $x$ and let $w(e)=1$ otherwise. Let $T$ be the spanning star
$K_{1,l}$ with $x$ as the root. Note that $w(T) = Wl$.
Let $\theta = w(G)/2 + \epX{} \; w(T)$ and note that the following holds.
\[
\theta  =  \frac{1}{2} {l \choose 2} + Wl \left( \frac{1}{2} + \epX{} \right)   =  \frac{l(l-1)}{4} + Wl \left( \frac{1}{2} + \epX{} \right)  
\]
We will now bound ${\rm mac}(G)$ from above. Let $(A,B)$ be any partition of $V(G)$ and without loss of generality assume that $x \in A$.
Let $G'$ be the graph obtained from $G$ by replacing $x$ with an independent set, $X$, containing $W$ vertices, such that
$N(v) = V(G) \setminus \{x\}$  for all $v \in X$. Let all edge-weights of $G'$ be one. Note that the weight of the cut
$(X \cup A \setminus \{x\},B)$ in $G'$ is the same as the weight of the cut $(A,B)$ in $G$ and as $(A,B)$ is an arbitrary
cut in $G$ we must have ${\rm mac}(G') \geq {\rm mac}(G)$. As the maximum number of edges in a cut in a graph on $W+l$ vertices is at most
$((W+l)/2)^2$ the following must hold.
\[
{\rm mac}(G) \leq {\rm mac}(G') \leq \left( \frac{W+l}{2} \right)^2 = \frac{(W + l)^2}{4}
\]
Recall that  $l > \frac{W^2}{4W\epX{} -1}$, which implies that $l(4W\epX{} -1) > W^2$.
Adding $2Wl+l^2$ to both sides gives us the following:
\[
l^2 + 4lW \left( \frac{1}{2} + \epX{} \right) - l > 2Wl + W^2 + l^2
\]
Dividing both sides by $4$ and recalling our bounds for $\theta$ and ${\rm mac}(G)$ implies the following:
\[
{\rm mac}(G) \leq \frac{2Wl + W^2 + l^2}{4} <  \frac{l(l-1)}{4} + lW \left( \frac{1}{2} + \epX{} \right) = \theta 
\]
\end{proof}

Unfortunately, it is {\sf NP}-hard to compute the maximum weight of a DFS tree in a weighted graph even in the class of {\em triangle-free} graphs, i.e. graphs that do not contains $K_3$ as a subgraph.

\begin{theorem}\label{NPdfs}
It is {\sf NP}-hard to compute the maximum weight of a DFS tree in a weighted triangle-free graph.
\end{theorem}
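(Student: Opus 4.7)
The plan is to reduce from Hamiltonian Path in balanced bipartite graphs, an ${\sf NP}$-complete problem: it inherits hardness from Hamiltonian Cycle on bipartite graphs (whose instances are necessarily balanced), since deleting any edge of a Hamiltonian cycle yields a Hamiltonian path, and Hamiltonian Path with specified endpoints reduces to the unspecified version by appending two pendants while preserving balance. Given a bipartite graph $H$ with parts $A,B$ satisfying $|A|=|B|=n/2$, I would construct a triangle-free weighted graph $G$ as follows: adjoin two new vertices $s,t$; add every edge $sv$ with $v\in A$ and every edge $tv$ with $v\in B$; weight each edge of $E(H)$ by $M$ (any fixed $M\geq 2$) and every new edge by $1$. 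Since $A$ and $B$ are independent in $H$ and $s,t$ are non-adjacent, $G$ is triangle-free. The maximum spanning tree of $G$ has weight exactly $(n-1)M+2$, because any spanning tree contains at least one edge incident with $s$ and one with $t$ and can include at most $n-1$ edges of $E(H)$. The goal is to show that a DFS tree of $G$ achieves this weight if and only if $H$ has a Hamiltonian path.

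For the forward direction, given a Hamiltonian path $v_1v_2\cdots v_n$ of $H$ with $v_1\in A$ and $v_n\in B$ (one of the two alternating orderings), the sequence $s\,v_1\,v_2\,\cdots\,v_n\,t$ is a spanning path of $G$ that can be produced by DFS started at $s$ by choosing neighbors in this order. Because the tree is a path, every non-tree edge joins two vertices in ancestor/descendant relation, so it is a valid DFS tree of weight $(n-1)M+2$.

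The reverse direction is the main obstacle. Suppose $T$ is a DFS tree of $G$ with $w(T)\geq(n-1)M+2$. Then $T$ attains the maximum spanning-tree weight, so $T$ contains $n-1$ edges of $E(H)$ together with exactly one edge $sv_s$ ($v_s\in A$) and one edge $tv_t$ ($v_t\in B$). Hence each of $s,t$ has tree-degree $1$, so is either a leaf or the root of $T$. For every $v\in A\setminus\{v_s\}$ the non-tree edge $sv$ must be a back edge, which forces $v$ to lie on the root-to-$v_s$ tree-path $P_A$; symmetrically every $v\in B$ lies on the root-to-$v_t$ tree-path $P_B$. Restricted to $V(H)$, both $P_A$ and $P_B$ alternate between $A$ and $B$, so their $A$- and $B$-counts are tied to their lengths. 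The crux is a case analysis on (i) whether the root of $T$ is $s$, $t$, or a vertex of $H$, and (ii) whether $v_s$ is an ancestor of $v_t$, vice versa, or they lie in disjoint subtrees of a common ancestor $w$. In each configuration the alternation along tree-paths combined with the balance $|A|=|B|$ forces $P_A\cup P_B$ to contain all of $V(H)$ and to trace a Hamiltonian path of $H$; any deviation from that structure would leave some $A$- or $B$-vertex outside the required tree-path, contradicting the back-edge requirement for the corresponding non-tree $sv$ or $tv$ edge. Since the decision problem reduces to the optimization one, this completes the reduction and establishes ${\sf NP}$-hardness.
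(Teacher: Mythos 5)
Your reduction is correct, and it takes a genuinely different route from the paper's. The paper reduces from the Hamilton $(x,y)$-path problem in triangle-free graphs: it attaches a private vertex $z_i$ to each $v_i$ plus an apex $z$, gives the edges $z_iv_i$ ($i<n$) weight $1$ and everything else weight $2$, so that a maximum-weight DFS tree cannot let any $v_j$ ($j<n$) become a dead end inside $V(G)$; this forces the tree restricted to $V(G)$ to be a Hamilton path, and the root is controlled by doubling the whole construction. You instead reduce from Hamiltonian Path in balanced bipartite graphs, attach two apexes $s,t$ to the two sides with cheap edges, and use the Tr\'emaux (back-edge) characterization of DFS trees: the non-tree edges $sv$ and $tv$ must join comparable vertices, which pins all of $A$ and all of $B$ onto root-to-leaf paths, and alternation plus $|A|=|B|$ then yields the Hamiltonian path; the root is handled by case analysis rather than doubling. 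I checked that your case analysis closes: at least one of $s,t$ is a non-root leaf, say $t$, so all of $B$ lies on the root-to-$t$ path, whose interior consists of $H$-edges and hence alternates; if that path enters $H$ at a vertex of $A$ (root $=s$ or root in $A$) the balance count already makes it Hamiltonian, and if the root is in $B$ the symmetric argument at $s$ finishes. Two small points to tidy in a full write-up: the claim that every $v\in A\setminus\{v_s\}$ must lie on the root-to-$v_s$ path is valid only when $s$ is not the root (if $s$ is the root, all of $V(G)$ are descendants of $s$ and nothing is forced there---the condition at $t$ then does the work), and the ${\sf NP}$-hardness of Hamiltonian Path in balanced bipartite graphs is better phrased as the Turing reduction ``$G$ has a Hamiltonian cycle iff some edge $uv$ admits a $(u,v)$-Hamiltonian path,'' followed by your balance-preserving pendant trick, rather than the one-way remark about deleting a cycle edge. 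In exchange for relying on bipartite structure globally, your construction avoids both the doubling step and the need for a source problem with designated endpoints.
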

\begin{proof}
We will reduce from the Hamilton $(x,y)$-path problem proved to be {\sf NP}-complete in \cite{ItPaSz82} for triangle-free graphs (it is {\sf NP}-complete  already for grid graphs). 
In this problem, given a triangle-free graph $G$ and two vertices $x,y$ of $G$, we are to decide whether $G$ has a Hamilton path with end-vertices $x$ and $y.$ 
Given such a graph $G$ and $x,y \in V(G)$ we create a new weighted graph $G'$ as follows {(see Figure  \ref{Thm34fig})}.

Let $V(G)=\{v_1,v_2,\ldots,v_n\}$ and let $V(G') = V(G) \cup Z \cup \{z\}$, where $Z=\{z_1,z_2,\ldots,z_n\}$.
Let $E(G')=E(G) \cup \{ z_i v_i, z z_i \; | \; i=1,2,\ldots,n \}$. Note that $G'$ is triangle-free.
Assume that $v_n=y$ and let the weight of all edges $z_i v_i$ be 1 for $i=1,2,\ldots,n-1$ and let 
the weight of all other edges in $G'$ be 2. Note that the maximum weight of a spanning tree in $G'$ is at most $2(|V(G')|-1).$

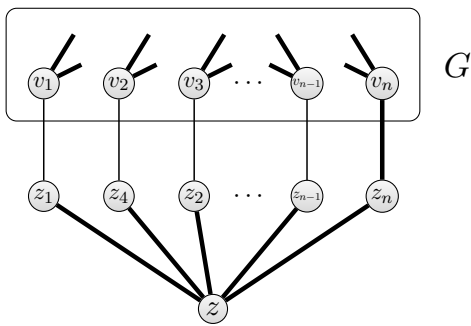
\begin{figure}[hbtp]
\begin{center}
\tikzstyle{vertexDOT}=[scale=0.25,circle,draw,fill]
\tikzstyle{vertexY}=[circle,draw, top color=gray!5, bottom color=gray!30, minimum size=11pt, scale=0.8, inner sep=0.99pt]
\tikzstyle{vertexZ}=[circle,draw, top color=gray!5, bottom color=gray!30, minimum size=11pt, scale=0.5, inner sep=0.49pt]
\tikzstyle{vertexW}=[circle,draw, top color=gray!5, bottom color=gray!30, minimum size=11pt, scale=1.0, inner sep=1.1pt]

\begin{tikzpicture}[scale=0.5]
\node (v1) at (1,7) [vertexY] {$v_1$};
\node (v2) at (3,7) [vertexY] {$v_2$};
\node (v3) at (5,7) [vertexY] {$v_3$};
\node (v4) at (8,7) [vertexZ] {$v_{n-1}$};
\node (v5) at (10,7) [vertexY] {$v_n$};
\draw [rounded corners] (0,6) rectangle (11,9);   

\node (z1) at (1,4) [vertexY] {$z_1$};
\node (z2) at (3,4) [vertexY] {$z_4$};
\node (z3) at (5,4) [vertexY] {$z_2$};
\node (z4) at (8,4) [vertexZ] {$z_{n-1}$};
\node (z5) at (10,4) [vertexY] {$z_n$};

\node (z) at (5.5,1) [vertexW] {$z$};

\draw [line width=0.06cm] (v1) -- (2,7.5);
\draw [line width=0.06cm] (v1) -- (1.8,8.3);
\draw [line width=0.06cm] (v2) -- (4,7.5);
\draw [line width=0.06cm] (v2) -- (3.8,8.3);
\draw [line width=0.06cm] (v3) -- (6,7.5);
\draw [line width=0.06cm] (v3) -- (5.8,8.3);

\draw [line width=0.06cm] (v4) -- (7,7.5);
\draw [line width=0.06cm] (v4) -- (7.2,8.3);
\draw [line width=0.06cm] (v5) -- (9,7.5);
\draw [line width=0.06cm] (v5) -- (9.2,8.3);

\draw [line width=0.02cm] (v1) -- (z1);
\draw [line width=0.02cm] (v2) -- (z2);
\draw [line width=0.02cm] (v3) -- (z3);
\draw [line width=0.02cm] (v4) -- (z4);
\draw [line width=0.06cm] (v5) -- (z5);

\draw [line width=0.06cm] (z) -- (z1);
\draw [line width=0.06cm] (z) -- (z2);
\draw [line width=0.06cm] (z) -- (z3);
\draw [line width=0.06cm] (z) -- (z4);
\draw [line width=0.06cm] (z) -- (z5);
  \node [scale=1.2] at (12,7.5) {$G$};
  \node [scale=0.9] at (6.5,7) {$\cdots$};
  \node [scale=0.9] at (6.5,4) {$\cdots$};
\end{tikzpicture}
\end{center}
\caption{The graph $G'$ in Theorem~\ref{NPdfs}. The thick edges have weight 2 and the thin edges have weight 1.}
\label{Thm34fig}
\end{figure}

We first show that there is a DFS tree rooted at $x$ of weight $2(|V(G')|-1)$ in $G'$ if and only if $G$ contains a Hamilton $(x,y)$-path.
Assume that $G$ contains a Hamilton $(x,y)$-path, $P$. Then $E(P)$ together with the edge $v_n z_n$ and all edges from $z$ to $Z$ form 
a DFS tree rooted at $x$ where all edges have weight 2.

Conversely assume that there is a DFS tree, {$T'$}, in $G'$, rooted in $x$, 
of weight $2(|V(G')|-1)$. This implies that all edges in {$T'$} have weight 2. As no edge of weight 1 is used 
{ in $T'$, the only edge between $T=T'[V(G)]$ and $T'-V(G)$ is $yz_n$. 
Thus, $T$ is a tree. Note that $T$ does not have  a vertex $v_j$ with $j<n$ as a leaf since otherwise {$T'$ would have edge $v_jz_j$, which is impossible}. 
Thus, $T$ is just a Hamilton path of $G$ from $x$ to $y.$}

This shows that there is a DFS tree rooted at $x$ of weight $2(|V(G')|-1)$ if and only if $G$ contains a Hamilton $(x,y)$-path.

Now create the graph $G^*$ by taking two copies, $G_1'$ and $G_2'$, of $G'$ and adding an edge of weight 2 between the copy of vertex $x$ in $G_1'$ and the copy of vertex $x$ in $G_1''.$
Note that $G^*$ is triangle-free.
 If there is a Hamilton $(x,y)$-path in $G$, then as shown above, there is a DFS tree in $G'$ rooted at $x$, 
where all edges have weight 2, which implies that there is DFS tree in $G^*$ where all edges have weight 2 (rooted in one of 
the copies of $x$). 

Conversely assume that there is a DFS tree, $T^*$, in $G^*$ where all edges have weight 2. Without loss 
of generality, assume the root of $T^*$ lies in $G_1'$. Then $T^*[V(G_2')]$ is a DFS tree in $G_2'$ rooted at its copy of $x$, which by the above implies that
there is a Hamilton $(x,y)$-path in $G$. 

Therefore, if we can decide whether there is a DFS-tree in $G^*$ (which is triangle-free) only containing edges of weight 2, then we can decide whether $G$ has a Hamilton $(x,y)$-path. This completes our proof.
\end{proof}



\section{Bounded Girth Families of {Connected} Graphs}\label{sec:boundedgirth}

{Throughout this section, $G$ is connected.}
The {\em girth} of $G$ is the minimum number of edges in a cycle of $G$. The {\em depth} of a rooted tree is the maximum number of edges in a path from the root to a leaf. 

\begin{theorem}\label{girth}
Let $k$ be a positive even integer. If the girth of $G$ is at least $k$, then ${\rm mac}(G)\ge \frac{w(G)}{2} + \frac{k-1}{2k} w(D)$, for every DFS tree $D$ in $G$.
\end{theorem}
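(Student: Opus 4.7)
The plan is to generalize the argument for Theorem \ref{dfs} by producing $k$ different $\mathcal{B}$-subgraphs of $G$ derived from $D$ and then averaging the resulting bounds from Theorem \ref{gen}. Root $D$ arbitrarily, and say a tree edge between $L_i$ and $L_{i+1}$ has \emph{level $i$}. For each $j \in \{0,1,\ldots,k-1\}$ let $E_j$ be the set of tree edges at levels $\equiv j \pmod{k}$ and set $F_j = D \setminus E_j$; every connected component of $F_j$ is a subtree of $D$ spanning at most $k$ consecutive levels. Let $B_j$ be the set of back edges of the DFS (edges of $G\setminus D$) whose two endpoints lie in a common component of $F_j$, and put $R_j = F_j \cup B_j$.

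The key step is to verify that $R_j \in \mathcal{B}(G)$. For bipartiteness I would use the 2-coloring of $V(G)$ by the parity of the level. Every tree edge crosses this coloring. For a back edge $uv \in B_j$ with $u \in L_p$ an ancestor of $v \in L_q$, the girth condition gives $q - p + 1 \ge k$, while the fact that $u$ and $v$ lie in the same component of $F_j$ forces $q - p \le k - 1$ (otherwise the tree path from $u$ to $v$ contains $k$ consecutive levels, one of which is $\equiv j \pmod{k}$, and the corresponding edge cuts the path). Hence $q - p = k - 1$, which is odd because $k$ is even, so $u$ and $v$ sit in opposite parity classes and the back edge also crosses the 2-coloring. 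For the induced-subgraph requirement, I would use that in an undirected DFS tree every edge of $G$ is either a tree edge or a back edge; any tree edge in $E_j$ separates components of $F_j$, so its endpoints lie in different components; and every back edge between two vertices of a single component of $F_j$ belongs to $B_j$ by definition.

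Once $R_j \in \mathcal{B}(G)$ is established, Theorem \ref{gen} yields ${\rm mac}(G) \ge (w(G) + w(R_j))/2$ for each $j$. Using $w(R_j) \ge w(F_j) = w(D) - w(E_j)$ and $\sum_{j=0}^{k-1} w(E_j) = w(D)$, averaging the $k$ bounds gives
\[
{\rm mac}(G) \;\ge\; \frac{1}{k} \sum_{j=0}^{k-1} \frac{w(G) + w(R_j)}{2} \;\ge\; \frac{w(G)}{2} + \frac{1}{2k}\sum_{j=0}^{k-1}\bigl(w(D)-w(E_j)\bigr) \;=\; \frac{w(G)}{2} + \frac{k-1}{2k}\, w(D),
\]
as required.

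The main obstacle is the bipartiteness verification for $R_j$: one must simultaneously pin down the maximum level-span of a component of $F_j$ (at most $k-1$, via the cut pattern) and the minimum level-span of an internal back edge (at least $k-1$, via the girth), forcing equality at $k-1$, and then invoke the evenness of $k$ to turn $k-1$ into an odd number compatible with the level-parity 2-coloring. This is also the step that breaks down for odd $k$, which is consistent with the paper's remark that the bound degrades to the $g-1$ version in the odd-girth case.
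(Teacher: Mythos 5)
Your proposal is correct and follows essentially the same route as the paper: the same decomposition into $k$ subgraphs obtained by deleting the tree edges at levels $\equiv j \pmod{k}$ and re-adding the back edges internal to the resulting components, followed by the same averaging over $j$. Your level-parity argument for bipartiteness (girth forces level span $\ge k-1$, component structure forces $\le k-1$, and $k-1$ is odd) is just a more explicit version of the paper's observation that each component is a tree of depth at most $k-1$ plus possible leaf-to-root edges.
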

\begin{proof}
Let $G$ be any graph with girth at least $k$ and let $D$ be any DFS-tree in $G$. Let $r$ denote the root of $D$ and 
let $L_i$ be the set of vertices of $D$ reached from $r$ by a path with $i$ edges. Note that $L_0 = \{r\}$.

For $j=0,1,\ldots,k-1$, let $G_j$ be the subgraph induced by the edges of $T$ minus those between $L_i$ and $L_{i+1}$ for
all $i = j$ (mod $k$). Also add to $G_j$ the edges of $G-V(D)$ linking vertices in the same connected component of $G_j$ 
{(see Figure~\ref{Thm41fig} for an illustration of the $G_i$'s).}  
Since $D$ is a DFS tree, $D$ has no cross edges i.e. edges $xy$ such that $x$ is not a descendent of $y$ and  $y$ is not a descendent of $x$ \cite{CLRS09}.
As the girth of $G$ is at least $k,$ every connected component of $G_j$ consists of a tree of depth at most $k-1$ plus possibly some 
edges from the leaves to its root (if the leaves are at distance $k-1$ from the root). Since every connected component of $G_j$ is an induced subgraph of $G$ and $k$ is even, we note that  
every connected component of $G_j$ belongs to ${\cal B}(G)$ and and by Theorem~\ref{gen}, ${\rm mac}(G)\ge (w(G)+w(G_j))/2$ for every $j=0,1,\ldots,k-1.$

\begin{figure}[hbtp]
\begin{center}
\begin{tabular}{|c||c|c|c|c|} \hline
\tikzstyle{vertexDOT}=[scale=0.25,circle,draw,fill]
\tikzstyle{vertexY}=[circle,draw, top color=gray!5, bottom color=gray!30, minimum size=11pt, scale=0.8, inner sep=0.99pt]
\tikzstyle{vertexZ}=[circle,draw, top color=gray!5, bottom color=gray!30, minimum size=11pt, scale=0.5, inner sep=0.49pt]
\tikzstyle{vertexW}=[circle,draw, top color=gray!5, bottom color=gray!30, minimum size=11pt, scale=1.0, inner sep=1.1pt]

\begin{tikzpicture}[scale=0.45]
\node (v1) at (2,11) [vertexY] {$v_1$};
\node (v2) at (2,9) [vertexY] {$v_2$};
\node (v3) at (1,7) [vertexY] {$v_3$};
\node (v4) at (1,5) [vertexY] {$v_4$};
\node (v5) at (1,3) [vertexY] {$v_5$};
\node (v6) at (1,1) [vertexY] {$v_6$};
\node (v7) at (2.2,3) [vertexY] {$v_7$};
\node (v8) at (3,7) [vertexY] {$v_8$}; \node (v9) at (3,5) [vertexY] {$v_9$};

\draw [line width=0.06cm] (v1) -- (v2);
\draw [line width=0.06cm] (v2) -- (v3);
\draw [line width=0.06cm] (v3) -- (v4);
\draw [line width=0.06cm] (v4) -- (v5);
\draw [line width=0.06cm] (v5) -- (v6);
\draw [line width=0.06cm] (v4) -- (v7);
\draw [line width=0.06cm] (v2) -- (v8);
\draw [line width=0.06cm] (v8) -- (v9);

\draw [line width=0.01cm] (v2) to [out=210, in=120]  (v6);
\draw [line width=0.01cm] (v2) -- (v7);
\draw [line width=0.01cm] (v1) to [out=300, in=50]  (v9);

\node [scale=1.2] at (2,-0.5) {(a)};
\node [scale=0.9] at (2,11.7) {\mbox{ }};
\end{tikzpicture}
&
\tikzstyle{vertexDOT}=[scale=0.25,circle,draw,fill]
\tikzstyle{vertexY}=[circle,draw, top color=gray!5, bottom color=gray!30, minimum size=11pt, scale=0.8, inner sep=0.99pt]
\tikzstyle{vertexZ}=[circle,draw, top color=gray!5, bottom color=gray!30, minimum size=11pt, scale=0.5, inner sep=0.49pt]
\tikzstyle{vertexW}=[circle,draw, top color=gray!5, bottom color=gray!30, minimum size=11pt, scale=1.0, inner sep=1.1pt]

\begin{tikzpicture}[scale=0.45]
\node (v1) at (2,11) [vertexY] {$v_1$};
\node (v2) at (2,9) [vertexY] {$v_2$};
\node (v3) at (1,7) [vertexY] {$v_3$};
\node (v4) at (1,5) [vertexY] {$v_4$};
\node (v5) at (1,3) [vertexY] {$v_5$};
\node (v6) at (1,1) [vertexY] {$v_6$};
\node (v7) at (2.2,3) [vertexY] {$v_7$};
\node (v8) at (3,7) [vertexY] {$v_8$}; \node (v9) at (3,5) [vertexY] {$v_9$};

\draw [line width=0.06cm] (v2) -- (v3);
\draw [line width=0.06cm] (v3) -- (v4);
\draw [line width=0.06cm] (v4) -- (v5);
\draw [line width=0.06cm] (v4) -- (v7);
\draw [line width=0.06cm] (v2) -- (v8);
\draw [line width=0.06cm] (v8) -- (v9);

\draw [line width=0.01cm] (v2) -- (v7);

\node [scale=1.2] at (2,-0.5) {$G_0$};
\node [scale=0.9] at (2,11.7) {\mbox{ }};
\end{tikzpicture}
&
\tikzstyle{vertexDOT}=[scale=0.25,circle,draw,fill]
\tikzstyle{vertexY}=[circle,draw, top color=gray!5, bottom color=gray!30, minimum size=11pt, scale=0.8, inner sep=0.99pt]
\tikzstyle{vertexZ}=[circle,draw, top color=gray!5, bottom color=gray!30, minimum size=11pt, scale=0.5, inner sep=0.49pt]
\tikzstyle{vertexW}=[circle,draw, top color=gray!5, bottom color=gray!30, minimum size=11pt, scale=1.0, inner sep=1.1pt]

\begin{tikzpicture}[scale=0.45]
\node (v1) at (2,11) [vertexY] {$v_1$};
\node (v2) at (2,9) [vertexY] {$v_2$};
\node (v3) at (1,7) [vertexY] {$v_3$};
\node (v4) at (1,5) [vertexY] {$v_4$};
\node (v5) at (1,3) [vertexY] {$v_5$};
\node (v6) at (1,1) [vertexY] {$v_6$};
\node (v7) at (2.2,3) [vertexY] {$v_7$};
\node (v8) at (3,7) [vertexY] {$v_8$}; \node (v9) at (3,5) [vertexY] {$v_9$};

\draw [line width=0.06cm] (v1) -- (v2);
\draw [line width=0.06cm] (v3) -- (v4);
\draw [line width=0.06cm] (v4) -- (v5);
\draw [line width=0.06cm] (v5) -- (v6);
\draw [line width=0.06cm] (v4) -- (v7);
\draw [line width=0.06cm] (v8) -- (v9);


\node [scale=1.2] at (2,-0.5) {$G_1$};
\node [scale=0.9] at (2,11.7) {\mbox{ }};
\end{tikzpicture}
&
\tikzstyle{vertexDOT}=[scale=0.25,circle,draw,fill]
\tikzstyle{vertexY}=[circle,draw, top color=gray!5, bottom color=gray!30, minimum size=11pt, scale=0.8, inner sep=0.99pt]
\tikzstyle{vertexZ}=[circle,draw, top color=gray!5, bottom color=gray!30, minimum size=11pt, scale=0.5, inner sep=0.49pt]
\tikzstyle{vertexW}=[circle,draw, top color=gray!5, bottom color=gray!30, minimum size=11pt, scale=1.0, inner sep=1.1pt]

\begin{tikzpicture}[scale=0.45]
\node (v1) at (2,11) [vertexY] {$v_1$};
\node (v2) at (2,9) [vertexY] {$v_2$};
\node (v3) at (1,7) [vertexY] {$v_3$};
\node (v4) at (1,5) [vertexY] {$v_4$};
\node (v5) at (1,3) [vertexY] {$v_5$};
\node (v6) at (1,1) [vertexY] {$v_6$};
\node (v7) at (2.2,3) [vertexY] {$v_7$};
\node (v8) at (3,7) [vertexY] {$v_8$}; \node (v9) at (3,5) [vertexY] {$v_9$};

\draw [line width=0.06cm] (v1) -- (v2);
\draw [line width=0.06cm] (v2) -- (v3);
\draw [line width=0.06cm] (v4) -- (v5);
\draw [line width=0.06cm] (v5) -- (v6);
\draw [line width=0.06cm] (v4) -- (v7);
\draw [line width=0.06cm] (v2) -- (v8);


\node [scale=1.2] at (2,-0.5) {$G_2$};
\node [scale=0.9] at (2,11.7) {\mbox{ }};
\end{tikzpicture}
& 
\tikzstyle{vertexDOT}=[scale=0.25,circle,draw,fill]
\tikzstyle{vertexY}=[circle,draw, top color=gray!5, bottom color=gray!30, minimum size=11pt, scale=0.8, inner sep=0.99pt]
\tikzstyle{vertexZ}=[circle,draw, top color=gray!5, bottom color=gray!30, minimum size=11pt, scale=0.5, inner sep=0.49pt]
\tikzstyle{vertexW}=[circle,draw, top color=gray!5, bottom color=gray!30, minimum size=11pt, scale=1.0, inner sep=1.1pt]

\begin{tikzpicture}[scale=0.45]
\node (v1) at (2,11) [vertexY] {$v_1$};
\node (v2) at (2,9) [vertexY] {$v_2$};
\node (v3) at (1,7) [vertexY] {$v_3$};
\node (v4) at (1,5) [vertexY] {$v_4$};
\node (v5) at (1,3) [vertexY] {$v_5$};
\node (v6) at (1,1) [vertexY] {$v_6$};
\node (v7) at (2.2,3) [vertexY] {$v_7$};
\node (v8) at (3,7) [vertexY] {$v_8$}; \node (v9) at (3,5) [vertexY] {$v_9$};

\draw [line width=0.06cm] (v1) -- (v2);
\draw [line width=0.06cm] (v2) -- (v3);
\draw [line width=0.06cm] (v3) -- (v4);
\draw [line width=0.06cm] (v5) -- (v6);
\draw [line width=0.06cm] (v2) -- (v8);
\draw [line width=0.06cm] (v8) -- (v9);

\draw [line width=0.01cm] (v1) to [out=300, in=50]  (v9);

\node [scale=1.2] at (2,-0.5) {$G_3$};
\node [scale=0.9] at (2,11.7) {\mbox{ }};
\end{tikzpicture}
\\ \hline
\end{tabular}
\end{center}
\caption{We illustrate the proof of Theorem~\ref{girth} with the example shown in (a), where we see a DFS tree (thick edges) of a graph $G$.
The remaining graphs depict $G_0$, $G_1$, $G_2$ and $G_3$, respectively, where $k=4$.}
\label{Thm41fig}
\end{figure}
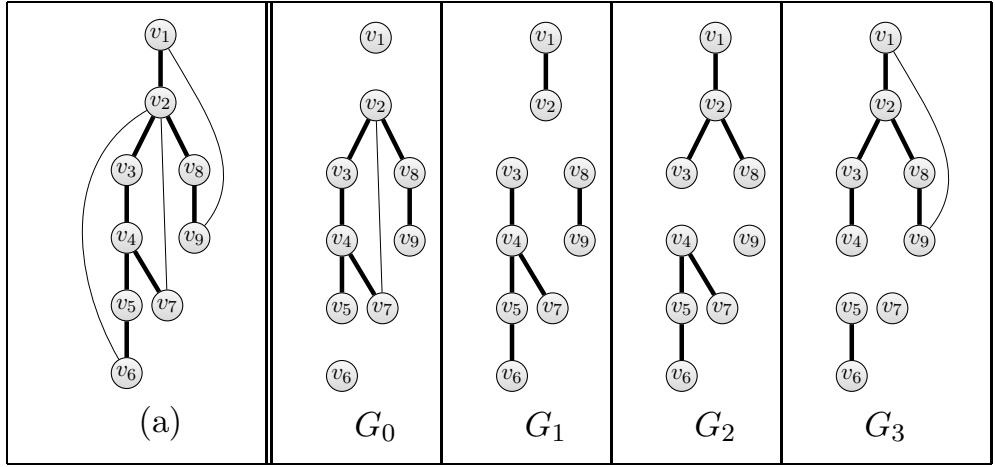

As every edge of $T$ belongs to $k-1$ of the $k$ subgraphs $G_j$'s, we note that summing the equations ${\rm mac}(G)\ge (w(G)+w(G_j))/2$ for all
$j=0,1,\ldots,k-1$ gives us the following:
\[
k  \cdot {\rm mac}(G)\ge k \; \frac{w(G)}{2} + \sum_{j=0}^{k-1} \frac{w(G_j)}{2} = k \; \frac{w(G)}{2} + (k-1) \frac{w(T)}{2} 
\]
Dividing the above inequality by $k$ gives us the desired bound.
\end{proof}


{To see that the lower bound of Theorem~\ref{girth} is tight, consider}
the unweighted cycle $C_{k+1},$ where $k$ is even. Then ${\rm mac}(C_{k+1})=k$ {and
the lower bound of Theorem \ref{girth} equals
\[
\frac{k+1}{2} + \frac{k-1}{2k} k = k.
\]
} 

Note that in Theorem \ref{girth}, $k$ is assumed to be even. When the girth $g$ is odd  we can 
use Theorem~\ref{girth} with $k=g-1$. This implies the following:

\begin{corollary}\label{girthODD}
Let $g$ be a positive odd integer. If the girth of $G$ is at least $g$, 
then ${\rm mac}(G)\ge \frac{w(G)}{2} + \frac{g-2}{2(g-1)} w(D)$, for every DFS tree $D$ in $G$.
\end{corollary}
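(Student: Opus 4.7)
The plan is to derive this odd-girth bound as an immediate specialization of Theorem~\ref{girth}, following the hint in the sentence immediately preceding the corollary. First I would set $k := g-1$. Since $g$ is odd and we may assume $g\ge 3$ (otherwise the claim is vacuous, as $G$ has no cycles of length $\le 2$), $k$ is a positive even integer, so $k$ is a legitimate choice for the parameter in Theorem~\ref{girth}. Next I would observe that the hypothesis of Theorem~\ref{girth} is satisfied for this $k$: a graph with girth at least $g$ automatically has girth at least $g-1 = k$, since $g-1 < g$.

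With the hypothesis verified, I would simply apply Theorem~\ref{girth} with $k = g-1$ to any DFS tree $D$ of $G$, obtaining
\[
{\rm mac}(G) \;\ge\; \frac{w(G)}{2} + \frac{k-1}{2k}\, w(D) \;=\; \frac{w(G)}{2} + \frac{g-2}{2(g-1)}\, w(D),
\]
which is precisely the stated bound. There is no genuine obstacle here; the corollary is a one-line substitution into the already-proved theorem. The only conceptual point worth noting is that for odd girth we "lose a layer" by rounding down from $g$ to $g-1$, which is why the fraction becomes $\frac{g-2}{2(g-1)}$ rather than $\frac{g-1}{2g}$; whether this loss is necessary is a separate question that the corollary does not address.
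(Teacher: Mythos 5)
Your proposal is correct and matches the paper's own derivation exactly: the paper likewise obtains the corollary by applying Theorem~\ref{girth} with $k=g-1$, which is even when $g$ is odd and is still a lower bound on the girth. Nothing further is needed.
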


{Corollary~\ref{girthODD} is tight for the cycle $C_g$ as the lower bounds for $C_g$ and ${\rm mac}(C_g)$ are both equal to $g-1$ in this case.}

\2

Recall that a graph of girth at least 4 is called {\em triangle-free}.

By Theorem~\ref{girth} for a triangle-free graph $G$, we have ${\rm mac}(G)\ge \frac{w(G)}{2} + \frac{3}{8} w(D),$ 
where $D$ is a DFS tree. However, by Theorem \ref{NPdfs} finding a DFS tree of maximum weight is {\sf NP}-hard.

The lower bound in the next theorem is of interest as it 
 implies that for a maximum weight spanning tree $T_{\max}$ of a triangle-free graph of $G,$ ${\rm mac}(G)\ge w(G)/2 + w(T_{\max})/4,$ 
which is stronger than the Poljak-Turz{\'{\i}}k lower bound. 

\begin{theorem}\label{tfree}
Let $G$ be a triangle-free graph and let $T$ be a spanning tree of $G$. Then ${\rm mac}(G)\ge w(G)/2 + w(T)/4.$
\end{theorem}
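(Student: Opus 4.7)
The plan is to mimic the proof of Theorem~\ref{dfs}, but with the arbitrary spanning tree $T$ playing the role of the DFS tree. Root $T$ at an arbitrary vertex $r$ and let $L_i$ be the set of vertices at distance $i$ from $r$ in $T$. For each $i\ge 0$, let $H_i$ be the subgraph of $T$ consisting of the edges between $L_i$ and $L_{i+1}$; equivalently, $H_i$ is the disjoint union, taken over $v\in L_i$, of the star $S_v$ whose edges are those of $T$ from $v$ to its children in $T$. Let $G_1$ be the union of those $H_i$ with $i$ odd and $G_2$ the union with $i$ even, so that $w(G_1)+w(G_2)=w(T)$.

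The main claim is that $G_1,G_2\in{\cal B}(G)$. Each connected component of $G_j$ is a single star $S_v$: vertex sets of distinct stars in $G_j$ are disjoint because children of distinct $T$-vertices are distinct, and centers/children live in levels of different parities across consecutive odd (or even) $i$'s. So it suffices to show that every $S_v$ is an induced subgraph of $G$. The only candidate missing edges would be edges of $G$ between two $T$-children $c,c'$ of $v$; but $v$ is already adjacent in $G$ to both $c$ and $c'$, so any such edge would create the triangle $vcc'$, contradicting triangle-freeness of $G$. Hence $S_v$ is induced in $G$, so $G_1,G_2\in{\cal B}(G)$. Applying Theorem~\ref{gen} to $G_1$ and $G_2$ and adding the resulting inequalities ${\rm mac}(G)\ge (w(G)+w(G_j))/2$ for $j=1,2$, then using $w(G_1)+w(G_2)=w(T)$, gives ${\rm mac}(G)\ge w(G)/2+w(T)/4$.

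I do not expect a serious obstacle. The whole argument reduces to the observation that, unlike in Theorem~\ref{dfs} where the DFS property (absence of cross edges) was required to force the level components to be induced in $G$, here cross edges are harmless: the only way a component $S_v$ could fail to be induced is via a sibling edge, which is precisely what triangle-freeness forbids. This perspective also clarifies why the triangle-free hypothesis is essential, in view of Theorem~\ref{notANYtree}: as soon as $G$ may contain triangles, an arbitrary spanning tree can produce stars with adjacent siblings, breaking the induced-component condition needed to invoke Theorem~\ref{gen}.
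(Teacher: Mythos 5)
Your proof is correct and is essentially the paper's own argument: the paper also levels an arbitrary rooting of $T$, splits the level-edges into odd and even layers, and notes that triangle-freeness forces the children of each node to form an independent set, so each star component is induced and Theorem~\ref{gen} applies. Your extra remark about why cross edges are harmless here (in contrast to Theorem~\ref{dfs}) is a correct and useful clarification but not a different method.
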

\begin{proof}
Let $T$ be rooted at vertex $u$.
Let $L_i$ the  set of vertices of $T$ at distance $i$ from $u$ in $D$.
Let $H_i$ be the subgraph of $T$ induced by the set of edges of $T$
between vertices $L_i$ and $L_{i+1}.$ {Since $G$ is triangle-free, the children of any node in $T$ form an independent set.
Thus, every $H_i$ is a disjoint union of stars.}
Let $G_0$ be the disjoint union of graphs $H_i$ with even $i$ and $G_1$ the disjoint union of graphs $H_i$ with odd $i.$
Thus, $G_0,G_1\in {\cal B}(G).$ Hence, by Theorem \ref{gen}, ${\rm mac}(G)\ge (w(G)+w(G_j))/2$ for $j=0,1.$ 
These bounds and $w(G_0)+w(G_1)=w(T)$ imply ${\rm mac}(G)\ge w(G)/2+w(T)/4.$
\end{proof}

Note that in Theorem~\ref{tfree}, $T$ can be any spanning tree. By Theorem~\ref{notANYtree} no similar bound holds if
we drop the condition that $G$ is triangle-free. {In fact, for every $\epX > 0$ there exist graphs $G$ with spanning tree $T$ such that ${\rm mac}(G) < w(G)/2 + \epX \cdot w(T)$.}

%

Define $\theta$ to be the largest value  such that ${\rm mac}(G) \ge \frac{w(G)}{2} + \theta \cdot w(T)$
holds for all spanning trees $T$ in a triangle-free graph $G$.

\begin{proposition}\label{boundsI}
$\frac{1}{4} \leq \theta \leq \frac{3}{8}$
\end{proposition}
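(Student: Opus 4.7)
The proposal is to prove the two inequalities separately; both should be short.

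For the lower bound $\theta \geq \frac{1}{4}$, I would simply cite Theorem~\ref{tfree}: for any triangle-free graph $G$ and any spanning tree $T$, we have ${\rm mac}(G) \geq \frac{w(G)}{2} + \frac{1}{4} w(T)$. By definition of $\theta$ as the largest such constant valid across all triangle-free graphs and spanning trees, this immediately gives $\theta \geq \frac{1}{4}$.

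For the upper bound $\theta \leq \frac{3}{8}$, I need to exhibit a single triangle-free graph $G$ and a spanning tree $T$ such that the inequality ${\rm mac}(G) \geq \frac{w(G)}{2} + \theta \cdot w(T)$ forces $\theta \leq \frac{3}{8}$. The natural small candidate is the 5-cycle $C_5$ with unit weights: it is triangle-free (its girth is $5$), it has $w(C_5) = 5$, and any spanning tree is a $P_5$ with $w(T) = 4$. Since $C_5$ is not bipartite, any cut must exclude at least one edge, and removing any single edge yields a bipartite $P_5$, so ${\rm mac}(C_5) = 4$. Substituting into ${\rm mac}(G) \geq \frac{w(G)}{2} + \theta \cdot w(T)$ gives $4 \geq \frac{5}{2} + 4\theta$, hence $\theta \leq \frac{3}{8}$.

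There is no real obstacle here — both bounds are essentially immediate once the right example ($C_5$) is identified for the upper bound. The main thing to double-check is that the example is genuinely triangle-free (it is, girth $5 > 3$) and that ${\rm mac}(C_5) = 4$ (standard: an odd cycle of length $2k+1$ has max cut $2k$). The lower bound is a one-line invocation of the preceding theorem.
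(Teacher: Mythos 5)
Your proof is correct and matches the paper's argument exactly: the lower bound follows from Theorem~\ref{tfree}, and the upper bound comes from the unit-weight $C_5$ with $w(C_5)=5$, $w(T)=4$, ${\rm mac}(C_5)=4$. Nothing further is needed.
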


\begin{proof}
Theorem~\ref{tfree} implies that $\theta \geq 1/4$.
Now consider the cycle $C_5$ with weight 1 on all edges. Then $w(C_5)=5$, $w(T)=4$ for all spanning trees $T$ and ${\rm mac}(C_5)=4$,
which implies that $\theta \leq 3/8$. 
\end{proof}

We think that determining the optimal value of $\theta$ is an interesting open problem.  In fact, we guess that $\theta=3/8$.

\begin{conjecture}\label{MAINconjI}
Let $G$ be triangle-free and let $T$ be a spanning tree of $G$. Then ${\rm mac}(G)\ge \frac{w(G)}{2} + \frac{3w(T)}{8}.$
\end{conjecture}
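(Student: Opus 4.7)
My plan is to generalize the level-averaging argument of Theorem~\ref{girth} (with $k=4$) from DFS trees to an arbitrary spanning tree $T$. Root $T$ at a vertex $r$, let $L_i$ denote the set of vertices at $T$-depth $i$, and for each $j \in \{0,1,2,3\}$ let $T_j$ be the forest obtained from $T$ by deleting every tree edge between $L_i$ and $L_{i+1}$ with $i \equiv j \pmod 4$. Every tree edge lies in exactly three of the four $T_j$'s, so $\sum_{j=0}^3 w(T_j) = 3\,w(T)$. If for each $j$ I can exhibit a ${\cal B}$-subgraph $R_j \in {\cal B}(G)$ containing $T_j$, then averaging the four inequalities ${\rm mac}(G) \ge (w(G)+w(R_j))/2$ supplied by Theorem~\ref{gen} gives
\[
{\rm mac}(G) \;\ge\; \frac{w(G)}{2} + \frac{1}{8}\sum_{j=0}^{3} w(R_j) \;\ge\; \frac{w(G)}{2} + \frac{3\,w(T)}{8},
\]
which is exactly the conjectured inequality.

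The crux is therefore to extend each $T_j$ to an element of ${\cal B}(G)$. Each component of $T_j$ is a subtree of $T$ spanning at most four consecutive $T$-levels and admits the natural bipartition by level parity. In the DFS setting of Theorem~\ref{girth}, the girth-$4$ condition forces every non-tree edge with both endpoints inside a depth-$3$ component to be a back edge of length at least $3$, which necessarily joins vertices of different level parity; this is precisely what makes the DFS argument succeed. For a general spanning tree, however, non-tree edges of $G$ can behave as cross edges that either join two same-parity vertices inside a single component of $T_j$ (destroying bipartiteness) or connect distinct components of $T_j$ in a way that forces a non-induced merger. This is the main obstacle.

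I plan to attack this obstacle by replacing ``depth modulo $4$'' with a more flexible $4$-coloring $c : V(G) \to \mathbb{Z}/4\mathbb{Z}$ that still assigns consecutive values along each tree edge (so the $3/4$ tree-edge coverage is preserved) but is chosen locally so that every non-tree edge inside a single component joins vertices of opposite parity. Existence of such a coloring should follow from triangle-freeness via a rotation/discharging argument along short $T$-paths between the endpoints of offending cross edges; alternatively one might induct on the number of such cross edges, removing one at a time while tracking the deficit $w(G)/2 + 3w(T)/8 - {\rm mac}(G)$. The hardest case will be the tight configurations in which $G$ locally resembles the extremal example $C_5$ of Proposition~\ref{boundsI}; there no rotation leaves any slack and a direct structural argument seems necessary, closely linked to Conjecture~\ref{5cycles}, which posits a transversal of every $5$-cycle by a single edge set.
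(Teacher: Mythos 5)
The statement you are trying to prove is Conjecture~\ref{MAINconjI}, which the paper explicitly leaves open: the authors only establish the weaker bound ${\rm mac}(G)\ge w(G)/2+w(T)/4$ (Theorem~\ref{tfree}) and the bracketing $\frac14\le\theta\le\frac38$ (Proposition~\ref{boundsI}), and they state that determining whether $\theta=3/8$ is an open problem. So there is no proof in the paper to compare against, and your text is, by your own framing, a plan rather than a proof.

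The gap in the plan is exactly the step you flag as ``the crux,'' and it is not a technicality that a routine discharging argument will dispatch. The naive level-mod-$4$ decomposition already fails on the extremal example: take $G=C_5=r\,u\,a\,b\,v\,r$ with the spanning tree $T=\{ru,rv,ua,vb\}$ rooted at $r$. Then $L_0=\{r\}$, $L_1=\{u,v\}$, $L_2=\{a,b\}$, and the forests $T_2$ and $T_3$ delete no tree edges at all, so each equals $T$ itself; but $T$ is spanning and connected, its unique component induces all of $C_5$, which is odd, so $T_2,T_3\notin{\cal B}(G)$ and Theorem~\ref{gen} cannot be applied to them. The non-tree edge $ab$ joins two vertices of the same level parity inside one component, which is precisely the cross-edge obstruction you describe. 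Since $C_5$ is tight for the conjecture ($4=\frac52+\frac38\cdot 4$), any repair must recover the full weight $3w(T)$ across the four ${\cal B}$-subgraphs with zero slack, and you give no construction of the modified coloring $c$ that achieves this, only the assertion that one ``should follow'' from triangle-freeness. Your own remark that the tight configurations are ``closely linked to Conjecture~\ref{5cycles}'' is telling: the paper shows (Proposition~\ref{Implies5cycles}) that even the weaker Conjecture~\ref{MAINconjII} already implies that structural statement, so the missing step in your plan is at least as hard as another open conjecture. As it stands, the proposal reduces the conjecture to an unproved claim rather than proving it.
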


\section{ Triangle-free Graphs with Bounded Maximum Degree}\label{sec:conj}

Another interesting problem is to determine what happens to $\theta$ if we restrict ourselves to fixed maximum degrees.
That is, we let $\theta_{\Delta}$ be defined as the largest number for which the following holds:
If $G$ is triangle-free graph with $\Delta(G) \leq \Delta$ then ${\rm mac}(G)\ge \frac{w(G)}{2} + \theta_{\Delta} \cdot w(T)$.

It is not difficult to prove that $\theta_1=1/2$ (as if $\Delta(G) \leq 1$ then $G$ is bipartite) and
$\theta_2=3/8$ (due to $C_5$). This implies the following, by Proposition~\ref{boundsI}.

\begin{proposition}\label{boundsII}
$0.375 = \frac{3}{8} = \theta_2 \geq \theta_3 \geq \theta_4 \geq \ldots \geq \theta \geq \frac{1}{4} = 0.25$
\end{proposition}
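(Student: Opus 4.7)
The plan is to establish the four parts of the chain separately: $\theta \geq 1/4$ is already supplied by Proposition~\ref{boundsI}; the monotonicity $\theta_2 \geq \theta_3 \geq \theta_4 \geq \cdots$ and the inequalities $\theta_\Delta \geq \theta$ are a class-inclusion argument; and the equality $\theta_2 = 3/8$ requires a small case analysis over connected graphs with $\Delta \leq 2$.

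First I would argue the monotonicity by inclusion. Let $\mathcal{F}_\Delta$ denote the class of pairs $(G,T)$ where $G$ is a weighted triangle-free graph with $\Delta(G) \leq \Delta$ and $T$ is a spanning tree of $G$, and let $\mathcal{F}$ be the analogous class without the degree restriction. Clearly $\mathcal{F}_\Delta \subseteq \mathcal{F}_{\Delta+1} \subseteq \mathcal{F}$, so any constant $c$ for which ${\rm mac}(G) \geq w(G)/2 + c\cdot w(T)$ holds uniformly over the larger class also holds over the smaller one; taking the supremum of such $c$ in each class immediately gives $\theta_{\Delta+1} \leq \theta_\Delta$ and $\theta \leq \theta_\Delta$ for every $\Delta \geq 2$. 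Combined with Proposition~\ref{boundsI}, this already yields the right-hand tail $\theta_3 \geq \theta_4 \geq \cdots \geq \theta \geq 1/4$.

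The main work is the equality $\theta_2 = 3/8$. The upper bound $\theta_2 \leq 3/8$ is witnessed by the unweighted $C_5$, which has $\Delta = 2$: this is exactly the example used in the proof of Proposition~\ref{boundsI}. For the lower bound $\theta_2 \geq 3/8$, I would use the fact that a connected graph with $\Delta \leq 2$ is either a path or a cycle. Paths and even cycles are bipartite, so ${\rm mac}(G) = w(G)$, and the required inequality reduces to $w(T) \leq (4/3)\, w(G)$, which holds because $w(T) \leq w(G)$. For an odd cycle $C_n$ with $n \geq 5$, we have ${\rm mac}(C_n) = w(C_n) - m$, where $m$ is the minimum edge weight; and if $e$ is the unique edge of $C_n$ not in $T$, then $w(T) = w(C_n) - w(e)$. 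After multiplying by $8$, the inequality ${\rm mac}(C_n) \geq w(C_n)/2 + (3/8)\, w(T)$ simplifies to
\[
 w(C_n) + 3\, w(e) \;\geq\; 8m,
\]
which follows from $w(C_n) \geq n m \geq 5 m$ and $w(e) \geq m$.

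The main ``obstacle,'' such as it is, lies in the odd-cycle case: one must ensure the bound holds for \emph{every} spanning tree, not merely the maximum-weight one. The algebraic reduction above is monotone in $w(e)$, so the worst case is $w(e) = m$, and the inequality becomes tight precisely for the unweighted $C_5$ (where $w(C_n) + 3m = 5 + 3 = 8 = 8m$); this confirms both that $3/8$ is the correct constant and that the bound is tight exactly at the example used for the upper bound.
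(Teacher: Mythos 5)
Your proof is correct and takes essentially the same route as the paper, which merely asserts that $\theta_2=3/8$ ``due to $C_5$'' and then invokes Proposition~\ref{boundsI} together with the (implicit) class-inclusion monotonicity. Your case analysis of paths, even cycles, and odd cycles --- reducing the odd-cycle case to $w(C_n)+3w(e)\geq 8m$ --- correctly supplies the details the paper leaves to the reader.
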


In the rest of this section, we will first study triangle-free graphs with maximum degree at most 3 (Subsection \ref{sec:3}) and then those of arbitrary bounded maximum degree (Subsection \ref{sec:Delta}).

\subsection{Triangle-free subcubic graphs} \label{sec:3}

{A graph is {\em subcubic} if its maximum degree is at most 3.}
Triangle-free subcubic graphs have been widely studied, see e.g. \cite{BoLO86,HopkinsS82,Yannakakis78}.
 We will discuss such graphs next, before moving to the more general case of triangle-free graphs of bounded maximum degree.

The following result is well-known.

\begin{theorem}\label{knownI} \cite{BoLO86}
If $G$ is triangle-free subcubic graph then there exists a bipartite subgraph of $G$
containing at least $\frac{4}{5}|E(G)|$ edges.
\end{theorem}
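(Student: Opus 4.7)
The plan is a maximum-cut argument refined by a local-exchange / charging analysis. Let $(A,B)$ be a maximum cut of $G$, let $C$ be the set of cross edges (which will be our bipartite subgraph), and let $I = E(G) \setminus C$ be the \emph{internal} edges. The goal reduces to showing $|I| \leq |E(G)|/5$, equivalently $|C| \geq 4|E(G)|/5$.

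The first step is a standard single-vertex flip observation: maximality of $(A,B)$ forces each vertex $v$ to have at least $\lceil d(v)/2 \rceil$ cross edges, hence at most $\lfloor d(v)/2 \rfloor \leq 1$ internal edges (since $d(v) \leq 3$). Consequently $I$ is a matching. The second step exploits triangle-freeness: for each $uv \in I$ with, say, $u,v \in A$, the remaining neighbors of $u$ and of $v$ all lie in $B$ (by the matching property), and any coincidence among them would create a triangle with $uv$. Thus each internal edge $e = uv$ sits inside a \emph{cherry} on (up to) six vertices $u,v,u_1,u_2,v_1,v_2$ and five edges $e, uu_1, uu_2, vv_1, vv_2$, which we collect into a set $E_e$.

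The third and crucial step is the charging. Since $I$ is a matching, each edge $f \in E(G)$ lies in at most two of the sets $E_e$ (once per endpoint of $f$ that happens to be in an internal edge). If both endpoints of every $e \in I$ have degree $3$, this already gives $5|I| \leq \sum_e |E_e| \leq |I| + 2|C|$, which only yields the weaker bound $|I| \leq |E(G)|/3$. To sharpen from $1/3$ to $1/5$ we invoke a second exchange inequality: for an internal edge $uv \subseteq A$ and a neighbor $u_1 \in B$ which itself lies in an internal edge $u_1 w \subseteq B$, the cut obtained by flipping the pair $\{u,u_1\}$ cannot exceed $|C|$. Writing out the resulting change-of-cut carefully, and using triangle-freeness to ensure the local structure is a genuine short cycle, one shows that a cross edge $uu_1$ can be charged to two different $E_e$'s only in very restricted configurations, all of which force a $C_5$-like local pattern whose contribution to the counting has already been paid for. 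Aggregating these savings tightens the inequality to $\sum_e |E_e| \leq \tfrac{5}{4}|C|$, and hence $|I| \leq |E(G)|/5$.

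The main obstacle is precisely the refinement from $1/3$ to $1/5$: the na\"ive charging is off by a constant factor, so the savings from the pair-flip inequality must be extracted \emph{uniformly} across the graph, including the worst case where many internal edges cluster and share cross-neighbors. The fact that $C_5$ (with all edges of weight $1$) attains $\mathrm{mac}(C_5)/|E(C_5)| = 4/5$ shows that the analysis must be essentially tight, and that equality is forced exactly when the graph looks locally like disjoint $5$-cycles -- a useful sanity check for calibrating the charging constants.
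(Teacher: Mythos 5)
The paper does not prove this statement; it is quoted directly from Bondy and Locke \cite{BoLO86}, whose original proof is a substantial piece of work, so the only question is whether your argument stands on its own. It does not: the decisive step is asserted rather than proved. Your first two steps are fine (maximality of the cut forces the set $I$ of internal edges to be a matching, and triangle-freeness makes each cherry $E_e$ consist of five distinct edges when both endpoints of $e$ have degree $3$), and the naive double count correctly yields only $|I|\le |E(G)|/3$, i.e.\ $|C|\ge \frac{2}{3}|E(G)|$ --- which is the easy bound the paper itself records (by a $3$-coloring argument) just after Conjecture~\ref{MAINconjII}. The entire content of the theorem is hidden in your sentence that a cross edge ``can be charged to two different $E_e$'s only in very restricted configurations, all of which force a $C_5$-like local pattern whose contribution to the counting has already been paid for.'' Nothing in the proposal establishes this. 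To see what is actually required, note that in the cubic case $\sum_e|E_e|=5|I|$, the total charge placed on cut edges is $4|I|=2D+S$ where $D$ and $S$ count cut edges charged twice and once, and $|C|=D+S+Z$ with $Z$ the number of uncharged cut edges; your target $\sum_e|E_e|\le\frac{5}{4}|C|$ is then equivalent to $D\le Z$, i.e.\ cut edges with both endpoints saturated by the matching $I$ must be (weakly) outnumbered by cut edges with neither endpoint saturated. That is a genuine structural theorem about maximum cuts of triangle-free subcubic graphs --- it holds with equality for the Petersen graph --- and you have not shown that local optimality under one- and two-vertex flips forces it; Bondy and Locke's argument is considerably more global and proceeds through a delicate case analysis.

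In short, the skeleton (maximum cut, matching structure of $I$, cherry charging) is a sensible start and correctly locates the difficulty, but the proposal stops exactly where the proof has to begin: as written it delivers $\frac{2}{3}|E(G)|$, not $\frac{4}{5}|E(G)|$.
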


We conjecture that the above theorem can be extended to the weighted case as follows.

\begin{conjecture}\label{MAINconjII}
Let $G$ be a triangle-free subcubic graph, then ${\rm mac}(G) \ge \frac{4}{5} w(G)$.
\end{conjecture}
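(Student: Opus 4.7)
The plan is to mimic Bondy and Locke's proof of the unweighted $\tfrac{4}{5}$-bound (Theorem~\ref{knownI}) in the weighted setting. Take a maximum-weight cut $(A,B)$ of $G$ and argue by contradiction: if $w(A,B) < \tfrac{4}{5}w(G)$, then the set $F := E(G)\setminus E(A,B)$ of monochromatic edges has weight $w(F) > \tfrac{1}{5}w(G)$. The central tool is the collection of non-improvement inequalities implied by maximality: for every $S \subseteq A$ (and similarly $S\subseteq B$), switching $S$ to the opposite side cannot increase the cut, giving a linear inequality on the edge weights around $S$. Single-vertex switches yield $i(v)\le o(v)$, where $i(v)$ and $o(v)$ are the total weights of edges from $v$ to its own side and to the opposite side, respectively; switches of larger sets containing the endpoints of a monochromatic edge together with some of their neighbors give sharper local constraints.

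The next step is to extract, around each monochromatic edge $uv$, a quantitative local slack. Because $G$ is subcubic and triangle-free, $u$ and $v$ together have at most four further neighbors, all distinct from each other and from $\{u,v\}$, so the switching inequalities for sets inside $N[\{u,v\}]$ reduce to a small linear system in the weights of the (at most five) edges incident to $\{u,v\}$. This system should force the cut edges incident to $\{u,v\}$ to carry enough surplus weight to charge against $uv$. I would then design a discharging scheme that initially places charge $w(e)$ on each edge and transfers the surplus from cut edges onto the monochromatic edges they stabilise, using subcubicity to bound how many monochromatic edges can call on a single cut edge, and aim for $w(E(A,B))\ge 4\,w(F)$, i.e.\ $w(F)\le \tfrac{1}{5}w(G)$, contradicting the assumption.

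The main obstacle is that the extremal examples $C_5$ and the Petersen graph attain $\tfrac{4}{5}$ only globally: in an optimum cut of $C_5$ a single unit of slack must be shared around a whole $5$-cycle, and no purely local redistribution around one monochromatic edge can account for it. This is precisely the phenomenon captured by the authors' $5$-cycle conjecture on an edge set meeting every $5$-cycle in exactly one edge. A natural route is therefore to prove that conjecture first and use its edge set $E'$ as the combinatorial backbone of the discharging, charging each monochromatic edge to the designated edge of the $5$-cycle containing it, converting the global averaging problem into a local one. An alternative is an LP-duality attack: write the switching inequalities as the primal constraints of a weighted-cut LP and exhibit an explicit dual feasible solution of value $\tfrac{4}{5}w(G)$ that averages correctly over the $5$-cycles of $G$.
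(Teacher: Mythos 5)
This statement is posed as an open conjecture in the paper (Conjecture~\ref{MAINconjII}); the authors do not prove it, and the best they achieve is the weaker bound ${\rm mac}(G)\ge \frac{8}{11}w(G)$ of Theorem~\ref{mainProb}, obtained by a rather different and substantially more involved argument. Your text is a research plan, not a proof: you yourself identify the decisive obstacle (the $C_5$ and Petersen extremal examples, where the $\frac{1}{5}$ of slack is distributed globally around odd cycles and cannot be charged locally to a single monochromatic edge) and then propose two unexecuted ways around it. Neither route closes the gap. The first route asks to first prove Conjecture~\ref{5cycles}; but the paper only establishes the implication in the opposite direction (Proposition~\ref{Implies5cycles} shows that Conjecture~\ref{MAINconjII} implies Conjecture~\ref{5cycles}), and even granting Conjecture~\ref{5cycles}, an edge set meeting every $5$-cycle exactly once says nothing about longer odd cycles or about where the weight sits, so it does not obviously drive a discharging argument to the weighted $\frac{4}{5}$ bound. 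The second route (exhibit a dual-feasible solution of the switching LP of value $\frac{4}{5}w(G)$) is a restatement of the problem, not a solution.

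More fundamentally, the opening step of ``mimicking Bondy--Locke'' is exactly what fails in the weighted setting. The unweighted proof leans on cardinality arguments: a single-vertex switch gives that at most one of the three edges at $v$ is monochromatic, and larger switches around a monochromatic edge exploit that the incident cut edges are \emph{many} relative to the \emph{one} bad edge. With arbitrary nonnegative weights, the inequality $i(v)\le o(v)$ only says the monochromatic weight at $v$ is at most the cut weight at $v$, which by itself yields nothing better than the trivial $w(A,B)\ge \frac{1}{2}w(G)$; the local linear system around a monochromatic edge $uv$ does not force the surplus you need, precisely because a heavy edge $uv$ can be balanced by comparably heavy cut edges without any quantifiable local slack. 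This is why the authors resort to an entirely different machinery (a $3$-coloring, the auxiliary digraph $D^*$, Lemma~\ref{girth2} and Lemma~\ref{mainMatch}) and still only reach $\frac{8}{11}$. As it stands, your proposal does not prove the statement.
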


Note that Theorem~\ref{knownI} implies that Conjecture~\ref{MAINconjII} holds in the case when all weights equal 1.
{As a support of Conjecture \ref{MAINconjII}, one can easily
show that if G is a  triangle-free subcubic graph, then ${\rm mac}(G) \ge \frac{2}{3} w(G)$, as follows.}
As $G$ is triangle-free with $\Delta(G) \leq 3$ it is known that $G$ is $3$-colorable, 
by Brook's Theorem.  Let $V_1,V_2,V_3$ be the three color classes in a proper $3$-coloring of $G$ and
assume that without loss of generality $w(V_1,V_2)$  is the maximum value in $\{w(V_1,V_2),w(V_1,V_3),w(V_2,V_3)\},$
where $w(V_i,V_j)$ is the total weight of all edges between $V_i$ and $V_j$. For each vertex $v \in V_3$ add it to 
$V_1$ if $w(v,V_2) \geq w(v,V_1)$ and otherwise add it to $V_2$. This results in a bipartition $(V_1,V_2)$ with 
weight at least $\frac{2}{3} w(G)$.

{In Section \ref{sec:proofs}, we will provide a proof of the following theorem, which approaches Conjecture~\ref{MAINconjII} even more.}

\begin{theorem}\label{mainProb}
Let $G$ be an edge-weighted triangle-free subcubic graph. 
Then ${\rm mac}(G) \geq \frac{8}{11} \cdot w(G)$.
\end{theorem}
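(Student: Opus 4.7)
The plan is to combine Vizing's chromatic-index theorem with a probabilistic application of the generic bound of Theorem~\ref{gen}. Since $\Delta(G)\le 3$, Vizing's theorem furnishes a proper edge-coloring of $G$ with at most four colors, giving a partition $E(G)=M_1\cup M_2\cup M_3\cup M_4$ into matchings (some possibly empty). For each of the $\binom{4}{2}=6$ pairs $\{i,j\}$, let $H_{ij}:=(V(G),M_i\cup M_j)$. Each $H_{ij}$ has maximum degree $2$, and because every cycle of $H_{ij}$ alternates colors $i$ and $j$ it has even length; hence $H_{ij}$ is bipartite. The obstruction to applying Theorem~\ref{gen} to $H_{ij}$ directly is that a component of $H_{ij}$ need not be induced in $G$: an edge $e\in M_k$ with $k\notin\{i,j\}$ may join two vertices inside the same component.

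The approach is to extract from each $H_{ij}$ a $\mathcal{B}$-subgraph $R_{ij}\subseteq H_{ij}$ by deleting $H_{ij}$-edges near each such chord so that the components of $R_{ij}$ become induced in $G$. Applying Theorem~\ref{gen} to $R_{ij}$ and summing the six resulting inequalities $2\,\mathrm{mac}(G)\ge w(G)+w(R_{ij})$ yields
\[
12\,\mathrm{mac}(G)\ \ge\ 6\,w(G)+\sum_{i<j}w(R_{ij}).
\]
Because each edge of $G$ lies in exactly three of the six subgraphs $H_{ij}$, we have $\sum_{i<j}w(H_{ij})=3\,w(G)$, so the target bound $\mathrm{mac}(G)\ge\tfrac{8}{11}w(G)$ reduces to establishing the total-loss inequality
\[
\sum_{i<j}\bigl(w(H_{ij})-w(R_{ij})\bigr)\ \le\ \tfrac{3}{11}\,w(G).
\]
The probabilistic ingredient enters here: one randomizes the choice of deleted $H_{ij}$-edge near each chord (uniformly over the symmetric local options) and charges the expected deletion weight back, edge by edge, to each chord $e\in M_k$ in terms of $w(e)$.

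The main obstacle is controlling the loss when several chords cluster within a single component of $H_{ij}$, e.g.\ around $5$-cycles of $G$ (precisely the configurations behind Conjecture~\ref{5cycles}) and around $K_{2,3}$-like subgraphs, where a naive deletion scheme overshoots the $\tfrac{3}{11}w(G)$ budget and only yields the weaker constant $\tfrac{3}{4}$ one would obtain if no deletions were ever needed. The delicate step is therefore a bounded-configuration case analysis: the triangle-free subcubic hypothesis must be used to limit both the number of chords a single $H_{ij}$-component can carry and the number of pairs $\{i,j\}$ in which a given edge can be counted as a deletion, and an LP- or charging-style argument then has to show that even the worst local configuration meets the $\tfrac{3}{11}$ budget, yielding the constant $\tfrac{8}{11}$.
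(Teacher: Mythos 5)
Your proposal is a framework, not a proof: everything hinges on the total-loss inequality $\sum_{i<j}\bigl(w(H_{ij})-w(R_{ij})\bigr)\le \tfrac{3}{11}w(G)$, and you explicitly defer its proof to an unspecified ``LP- or charging-style'' case analysis. That step is exactly where all the difficulty of the theorem lives, and the easy estimates fall far short of the budget. Each edge $e\in M_k$ can be a chord in up to three of the six graphs $H_{ij}$, and each chord closing an odd cycle forces the deletion of one edge from a path of length at least $4$ joining its endpoints; since the weights are arbitrary, the deleted edge can be much heavier than the chord, so you cannot charge deletions to chords, and charging them to the spanned subpaths only gives a loss of order $\tfrac14\sum_{i<j}w(H_{ij})=\tfrac34 w(G)$ in the worst case --- an order of magnitude over the $\tfrac{3}{11}w(G)$ budget. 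To close this gap you would need to exploit structural constraints on how $5$-cycles and chords can cluster in a triangle-free subcubic graph relative to a Vizing edge-coloring, and nothing in the proposal indicates how to do that; as written, the argument only yields the trivial ``no-deletion'' constant $\tfrac34$ in the chord-free case and nothing quantitative otherwise.

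For comparison, the paper takes a genuinely different route that avoids this loss-accounting problem. It first reduces to the $3$-regular case, takes a proper \emph{vertex} $3$-coloring (Brooks), and builds a functional digraph $D^*$ by sending each vertex $v$ to its unique neighbour $s(v)$ whose colour appears once in $N(v)$. This splits $E(G)$ into classes $A_0,A_1,A_2$ and, crucially, guarantees that every cycle of the graph $G^*$ of $s$-edges (and of $G^*+e$ for $e\in A_0$) has length divisible by $3$, hence every odd cycle there has length at least $9$; Lemma~\ref{girth2} with $k=4$ then gives coefficient $\tfrac78$ on $A_1\cup A_2$ with no deletion losses to control, Lemma~\ref{mainMatch} handles the matching $A_2$, and a convex combination of three such inequalities produces $\tfrac{8}{11}$. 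If you want to salvage your edge-coloring scheme, you would need an analogous structural lemma forcing the chord-induced odd cycles to be long and sparse, which is not available for an arbitrary Vizing colouring.
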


\2

{Theorem~\ref{mainProb} will be used to prove the following theorem in Section \ref{sec:proofs}.}

\begin{theorem}\label{mainProbTree}
Let $G$ be an edge-weighted triangle-free subcubic graph
and let $T$ be an arbitrary spanning tree in $G$.
Then ${\rm mac}(G) \geq \frac{w(G)}{2} + 0.3193 \cdot w(T)$.
\end{theorem}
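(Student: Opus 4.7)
My plan is to combine Theorem~\ref{mainProb} with a biased random coloring based on the natural bipartition of the spanning tree $T$. Since $T$ is bipartite, I root it at an arbitrary vertex and let $(A_T, B_T)$ denote the induced bipartition of $V(G)$. For each parameter $p \in [0, 1/2]$, I analyze the random 2-coloring obtained by flipping each vertex's color (relative to $(A_T, B_T)$) independently with probability $p$. A direct computation shows that each tree edge remains in the resulting cut with probability $(1-p)^2 + p^2 = 1 - 2p(1-p)$, while each non-tree edge is in the cut with probability at least $2p(1-p)$ (the worst case being an edge within one of $A_T$ or $B_T$). Writing $w(G) = w(T) + w_F + w_I$, where $w_F$ and $w_I$ denote the weights of non-tree edges between and within the bipartition classes, the pessimistic choice $w_F = 0$ yields the parameterized family of lower bounds
\[
{\rm mac}(G) \geq (1-2p)^2\, w(T) + 2p(1-p)\, w(G).
\]

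Next, I will combine this family with Theorem~\ref{mainProb}'s guarantee ${\rm mac}(G) \geq (8/11)\, w(G)$ and with Theorem~\ref{tfree}'s bound ${\rm mac}(G) \geq w(G)/2 + w(T)/4$ by taking appropriate convex combinations. For each instance $(G, T)$ the optimal $p$ and the mixing weights depend on the ratio $r = w(T)/w(G)$: for small $r$, Theorem~\ref{mainProb} dominates, while for large $r$ the tree-bipartition-based bounds take over. Setting up this multi-parameter optimization produces a uniform bound of the form $w(G)/2 + \theta\, w(T)$, with $\theta$ determined by the worst-case ratio at which the contributing bounds balance.

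The main obstacle will be to carry out the optimization tightly enough to obtain the stated $\theta = 0.3193$. A straightforward three-way convex combination of the bounds above yields only $\theta = 5/16 = 0.3125$, so some additional sharpening is required. A natural candidate is to apply Theorem~\ref{mainProb} to the non-tree subgraph $G - E(T)$ (itself triangle-free and subcubic) in order to lower bound $w_F$, improving on the pessimistic $w_F = 0$ assumption used in the perturbation step. The numerical constant $0.3193$ should then emerge as the solution to the resulting balance equation between the Theorem~\ref{mainProb} bound and the sharpened perturbation bound, matched at the critical ratio $w(T)/w(G) \approx 0.73$.
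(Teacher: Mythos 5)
There is a genuine gap. Your vertex-flipping scheme relative to the tree bipartition gives, as you correctly compute, the family ${\rm mac}(G)\ge (1-2q)\,w(T)+q\,w(G)$ with $q=2p(1-p)\in[0,\tfrac12]$, and the best convex combination of this family with Theorems~\ref{mainProb} and \ref{tfree} is exactly $\theta=5/16$ (the relevant points $(q,1-2q)$ lie on the segment from $(0,1)$ to $(\tfrac12,0)$, and the upper hull with $(\tfrac{8}{11},0)$ evaluated at the constraint value $\tfrac12$ gives $5/16$; the point $(\tfrac12,\tfrac14)$ from Theorem~\ref{tfree} lies strictly below that hull). The proposed repair does not close this gap: applying Theorem~\ref{mainProb} to $G-E(T)$ bounds ${\rm mac}(G-E(T))$ with respect to \emph{some} cut, but says nothing about $w_F$, the weight of non-tree edges crossing the \emph{fixed} bipartition of $T$. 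Indeed $w_F$ can be $0$ -- take $C_5$ with $T$ a Hamiltonian path: the unique non-tree edge joins two vertices in the same class of $T$'s bipartition -- so the pessimistic case $w_F=0$ cannot be excluded and your method is genuinely capped at $5/16<0.3193$. The underlying problem is that your perturbation step never uses triangle-freeness.

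The paper's proof uses a different randomization (Lemma~\ref{lemmaProb}): sample each edge of $T$ independently with probability $p$, properly $2$-color each component of the resulting forest, and assign the two colors to each component uniformly and independently. Tree edges are then cut with probability $\tfrac{1+p}{2}$, while a non-tree edge $e$ can fail to behave like a fair coin only if its entire fundamental cycle $C_e-e$ survives the sampling \emph{and} $C_e$ is odd; since $G$ is triangle-free, $|E(C_e)|\ge 5$ in that case, so this bad event has probability at most $p^{4}$ and non-tree edges are cut with probability at least $\tfrac{1-p^{4}}{2}$. This is the step where triangle-freeness buys the improvement: at $p=0.85$ one gets ${\rm mac}(G)\ge 0.925\,w(T)+0.23899\ldots\,(w(G)-w(T))$, far better on non-tree edges than your $2p'(1-p')$ at comparable tree-edge guarantee, and a convex combination with Theorem~\ref{mainProb} then yields $0.3193$. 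To fix your argument you would need to replace the independent vertex flips by a coupling that keeps long stretches of $T$ intact, which is essentially what Lemma~\ref{lemmaProb} does.
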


The above theorem implies the following corollary.

\begin{corollary}
$0.3193 \leq \theta_3 \leq \frac{3}{8} = 0.375$. 
\end{corollary}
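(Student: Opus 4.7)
The plan is to combine Theorem~\ref{mainProb} with the tree-bipartition bound ${\rm mac}(G)\ge w(T)$ via an interpolation parameter whose optimization yields the constant $0.3193$.

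First, I would record the two baseline bounds. Theorem~\ref{mainProb} gives ${\rm mac}(G) \ge \frac{8}{11} w(G)$; using $w(G)\ge w(T)$, this rewrites as ${\rm mac}(G) \ge \frac{w(G)}{2} + \frac{5}{22} w(T)$, a statement of the desired shape but with the weaker constant $\frac{5}{22}\approx 0.227$. Since $T$ is bipartite, its $2$-coloring cuts every tree edge, giving ${\rm mac}(G) \ge w(T)$, a bound that is strong when $\rho := w(T)/w(G)$ is close to $1$. Parametrizing by $\rho$, I would observe that the pointwise maximum of these two bounds is tight at the crossover $\rho = 8/11$, where both evaluate to $\frac{8}{11} w(G)$, yielding a $w(T)$-coefficient of only $5/16 = 0.3125$ — just below $0.3193$.

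To close the small remaining gap I would use Theorem~\ref{mainProb} more actively than as a black box on $(G, w)$. One concrete route is to apply Theorem~\ref{mainProb} to a weight-perturbed version of $G$ — for instance, $w_\mu(e) = (1+\mu)\, w(e)$ for $e\in E(T)$ and $w_\mu(e)=w(e)$ otherwise — and, via the identity $w(C) = w_\mu(C) - \mu\, w(C\cap T)$ together with Theorem~\ref{tfree}'s bound $\frac{w(G)}{2}+\frac{w(T)}{4}$, to derive a family of lower bounds parametrized by $\mu$. Optimizing $\mu$ to balance the gain when $w(C\cap T)$ is large against the loss when $w(C\cap T)$ is small should cover the interval of $\rho$ around $8/11$ left uncovered by the two baseline bounds, and the specific decimal $0.3193$ would emerge as the numerical value of the algebraic quantity produced by this one-parameter optimization.

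The hard part will be exactly the gap between $5/16$ and $0.3193$: any argument that uses Theorems~\ref{mainProb} and~\ref{tfree} purely as black boxes (together with $w(T) \le w(G)$ and the tree-bipartition bound) provably stops at $5/16$, as can be checked by a short LP. Pushing further requires either applying Theorem~\ref{mainProb} to a carefully chosen modified weight function as above, or reopening its proof to extract information about how the $\frac{8}{11}$ guarantee distributes across individual edges — information that can then be matched against the tree. Identifying the right perturbation and verifying that the resulting case analysis over $\rho$ really achieves $0.3193$ is where most of the technical work is expected to sit.
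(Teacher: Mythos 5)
Your reduction of the statement to a bound of the form ${\rm mac}(G)\ge \frac{w(G)}{2}+0.3193\,w(T)$ is the right target (this is Theorem~\ref{mainProbTree}), and your LP observation is correct: using Theorem~\ref{mainProb} and ${\rm mac}(G)\ge w(T)$ purely as black boxes cannot beat $5/16$. But the device you propose to get past $5/16$ --- applying Theorem~\ref{mainProb} to a reweighted graph $w_\mu$ --- does not work. Writing $C$ for the cut guaranteed on $(G,w_\mu)$, you get $w(C)=w_\mu(C)-\mu\,w(C\cap T)\ge \frac{8}{11}\bigl(w(G)+\mu w(T)\bigr)-\mu\,w(C\cap T)$; for $\mu>0$ the only available control is $w(C\cap T)\le w(T)$, which yields $\frac{8}{11}w(G)-\frac{3\mu}{11}w(T)$, strictly worse than the unperturbed bound, and for $\mu<0$ the only safe estimate is $w(C\cap T)\ge 0$, yielding $\frac{8}{11}w(G)+\frac{8\mu}{11}w(T)$, again worse. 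A purely multiplicative guarantee gives no handle on how the cut weight splits between $T$ and $E(G)\setminus E(T)$, so the perturbation family collapses back onto the same LP and the gap remains.

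The paper's actual second ingredient is Lemma~\ref{lemmaProb}, a separate probabilistic bound rather than any reuse of Theorem~\ref{mainProb}: keep each edge of $T$ independently with probability $p$, two-color the resulting forest components uniformly at random, and note that a tree edge is cut with probability $\frac{1+p}{2}$, while a non-tree edge $e$ can fail to be cut with probability better than $\frac12$ only if the entire odd cycle of $T+e$ (of length at least $5$ by triangle-freeness) survives the sampling, an event of probability at most $p^{4}$. This gives ${\rm mac}(G)\ge \frac{1+p}{2}\,w(T)+\frac{1-p^{4}}{2}\bigl(w(G)-w(T)\bigr)$; with $p=0.85$ this is $0.925\,w(T)+0.23899\ldots\,(w(G)-w(T))$, and the convex combination of weight $0.46545$ on this with weight $0.53455$ on $\frac{8}{11}w(G)$ gives exactly $\frac{w(G)}{2}+0.3193\,w(T)$. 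Without this lemma, or some equivalent edge-level refinement, your argument provably stops at $5/16$, so the key idea is missing. Finally, the corollary also asserts $\theta_3\le \frac38$, which your proposal does not address; it is immediate from Proposition~\ref{boundsII}, since $C_5$ is triangle-free and subcubic and forces $\theta_3\le\theta_2=\frac38$.
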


Note that  
if Conjecture \ref{MAINconjI} holds for triangle-free subcubic graphs, then $\theta_3=3/8$.


\begin{proposition}\label{ImpliesC}
Conjecture~\ref{MAINconjII} implies Conjecture~\ref{MAINconjI} for triangle-free subcubic graphs.
\end{proposition}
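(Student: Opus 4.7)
The plan is to deduce the bound of Conjecture~\ref{MAINconjI} by combining two independent lower bounds on $\mathrm{mac}(G)$ and then doing a short case analysis: one bound will come for free from the tree structure, and the other will be exactly the hypothesis Conjecture~\ref{MAINconjII}.

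First I would observe the trivial bound $\mathrm{mac}(G)\ge w(T)$. Since $T$ is a spanning tree and every tree is bipartite, let $(P,Q)$ be the bipartition of $V(T)=V(G)$. Every edge of $T$ crosses this bipartition, so the cut $(P,Q)$ of $G$ has weight at least $w(T)$, giving $\mathrm{mac}(G)\ge w(T)$. Next, since $G$ is triangle-free and subcubic, Conjecture~\ref{MAINconjII} (which we are assuming) yields $\mathrm{mac}(G)\ge \tfrac{4}{5}w(G)$. Combining,
\[
\mathrm{mac}(G)\;\ge\;\max\!\left\{\,w(T),\ \tfrac{4}{5}\,w(G)\,\right\}.
\]

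It then remains to verify the purely numerical inequality
\[
\max\!\left\{\,w(T),\ \tfrac{4}{5}\,w(G)\,\right\}\;\ge\;\tfrac{1}{2}\,w(G)+\tfrac{3}{8}\,w(T),
\]
which I would do by splitting on whether $w(T)\le\tfrac{4}{5}w(G)$ or not. In the first case, $\tfrac{4}{5}w(G)-\tfrac{1}{2}w(G)-\tfrac{3}{8}w(T)=\tfrac{3}{10}w(G)-\tfrac{3}{8}w(T)=\tfrac{3}{8}\bigl(\tfrac{4}{5}w(G)-w(T)\bigr)\ge 0$, so the Conjecture~\ref{MAINconjII} bound suffices. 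In the second case, $w(T)-\tfrac{1}{2}w(G)-\tfrac{3}{8}w(T)=\tfrac{5}{8}w(T)-\tfrac{1}{2}w(G)>\tfrac{5}{8}\cdot\tfrac{4}{5}w(G)-\tfrac{1}{2}w(G)=0$, so the tree-bipartition bound suffices. The two bounds meet exactly at the threshold $w(T)=\tfrac{4}{5}w(G)$, which is reassuring since this is precisely where the tight example $C_5$ (with unit weights) sits.

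I do not foresee a genuine obstacle: the only nontrivial ingredient is Conjecture~\ref{MAINconjII}, which is given by hypothesis, and the remaining steps are a one-line cut argument plus an elementary inequality. The only thing worth being careful about is to apply Conjecture~\ref{MAINconjII} to $G$ itself with its original weights (rather than attempting a reweighting trick), since $G$ is already triangle-free and subcubic and no modification is required.
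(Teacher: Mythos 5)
Your proposal is correct and uses exactly the same two ingredients as the paper (the trivial bound $\mathrm{mac}(G)\ge w(T)$ from the bipartition of the spanning tree, plus the $\tfrac45 w(G)$ bound from Conjecture~\ref{MAINconjII}); the paper simply finishes by taking the convex combination $\tfrac38\cdot w(T)+\tfrac58\cdot\tfrac45 w(G)$ of the two bounds, which is equivalent to your max-plus-case-split. So this is essentially the paper's proof.
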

\begin{proof}
Let $G$ be a triangle-free subcubic graph and let $T$ be any spanning tree in $G$.
As any tree is bipartite, we note that ${\rm mac}(G)\ge w(T).$ 
Conjecture~\ref{MAINconjII} would imply that ${\rm mac}(G)\ge \frac{4}{5} w(G)$.
Thus, we have the following, which completes the proof.
\[
{\rm mac}(G) \geq   \frac{3}{8} w(T) + \frac{5}{8} \left( \frac{4}{5} w(G) \right)   =   \frac{3}{8} w(T) + \frac{1}{2} w(G) 
\]
\end{proof}

The well-known pentagon conjecture of Erd{\"{o}}s \cite{Erdos84} states that a triangle-free graph $G$ on $n$ vertices has at most $(n/5)^5$
5-cycles. The bound is tight as every graph obtained from $C_5$ by replacing every vertex $x$ by a set of $n/5$ independent vertices (with the same adjacencies as in $C_5$) 
has exactly $(n/5)^5$ 5-cycles. This conjecture was proved independently in \cite{Grzesik12} and \cite{HatamiHKNR13}.

Let us consider another conjecture, which is on 5-cycles in triangle-free subcubic graphs.

\begin{conjecture}\label{5cycles}
Every triangle-free subcubic graph $G$ contains a set $E'$ of edges, such that every 5-cycle in $G$ contains exactly one edge from $E'.$
\end{conjecture}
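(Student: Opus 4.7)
The plan is to prove Conjecture~\ref{5cycles} conditionally on Conjecture~\ref{MAINconjII}, realising the implication announced in the introduction. The idea is to exhibit $E'$ as the complement in $E(G)$ of a maximum cut with respect to a weight function tailored to $5$-cycles, so that MAINconjII forces every $5$-cycle to contribute exactly four edges to the cut.

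Concretely, for each $e \in E(G)$ let $w(e)$ be the number of $5$-cycles of $G$ through $e$. A double count gives $w(G) = \sum_{e} w(e) = 5 N_5$, where $N_5$ is the total number of $5$-cycles of $G$. The weighted graph $(G,w)$ is still triangle-free and subcubic, so Conjecture~\ref{MAINconjII} yields ${\rm mac}(G) \geq \tfrac{4}{5} w(G) = 4 N_5$. Fix a cut $(A,B)$ realising this bound and, for each $5$-cycle $C$, write $x_C = |E(C) \cap E(A,B)|$. Every cycle meets every cut in an even number of edges, hence $x_C \in \{0,2,4\}$; counting the contribution of each $5$-cycle to the cut weight gives $w(E(A,B)) = \sum_{C} x_C \geq 4 N_5$. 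Since $x_C \leq 4$ and the sum has exactly $N_5$ terms, we must have $x_C = 4$ for every $5$-cycle $C$. Taking $E' := E(G) \setminus E(A,B)$, every $5$-cycle of $G$ contains exactly $5 - 4 = 1$ edge of $E'$, which is precisely the conclusion of Conjecture~\ref{5cycles}.

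The main obstacle is of course that Conjecture~\ref{MAINconjII} is itself open, so the above is really a reduction that distils MAINconjII down to a clean structural assertion about $5$-cycles. An unconditional proof would seemingly require a direct combinatorial argument, for instance an induction on $|E(G)|$ that carefully handles how two $5$-cycles may share an edge or a single vertex in a triangle-free subcubic graph, or a parity argument akin to those used for $T$-joins. The appeal of the reduction is that the potentially hard structural content of Conjecture~\ref{5cycles} is extracted from the max-cut bound essentially for free, which also gives some additional evidence in favour of Conjecture~\ref{MAINconjII}.
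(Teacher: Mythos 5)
Your conditional argument is exactly the paper's own treatment of this statement: the paper does not prove the conjecture unconditionally either, but establishes in Proposition~\ref{Implies5cycles} that Conjecture~\ref{MAINconjII} implies it, using the same weighting of each edge by the number of $5$-cycles through it and the same counting that forces each $5$-cycle to contribute exactly one edge to $E'=E\setminus F$. Your parity observation ($x_C\in\{0,2,4\}$) is a harmless variant of the paper's remark that an odd cycle cannot lie entirely in a bipartite subgraph; otherwise the reduction is identical.
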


Note that Conjecture~\ref{5cycles} holds for all {triangle-free subcubic} graphs, $G$, where every edge belongs to equally many 5-cycles, say $k$, by 
the following argument. By Theorem~\ref{knownI}, there exists a bipartite subgraph of $G$ with at least $4|E(G)|/5$ edges.
Let $E'$ be all the edges not in this subgraph.
In $G$ there are $k|E(G)|/5$ distinct $5$-cycles and each of them is covered
by (at least) one of the at most $|E(G)|/5$ edges in $E'$.
As each edge in $E'$ can cover at most $k$ $5$-cycles, we note that every $5$-cycle is covered exactly once,
which implies that Conjecture~\ref{5cycles} holds for $G$.  As a special case, 
Conjecture~\ref{5cycles} holds for the Petersen Graph, where every edge lies in four $5$-cycles. 
For example, in Figure~\ref{PetersenE}, the edge set $E'=\{x_1y_1,y_3y_4,x_3x_4\}$ intersects every $5$-cycle of the Petersen Graph  exactly once.

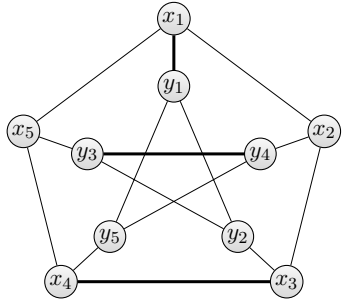
\begin{figure}[hbtp]
\begin{center}
\tikzstyle{vertexDOT}=[scale=0.25,circle,draw,fill]
\tikzstyle{vertexY}=[circle,draw, top color=gray!5, bottom color=gray!30, minimum size=11pt, scale=0.8, inner sep=0.99pt]
\begin{tikzpicture}[scale=0.5]
\node (x1) at (5,8) [vertexY] {$x_1$};
\node (x2) at (9,5) [vertexY] {$x_2$};
\node (x3) at (8,1) [vertexY] {$x_3$};
\node (x4) at (2,1) [vertexY] {$x_4$};
\node (x5) at (1,5) [vertexY] {$x_5$};

\node (y1) at (5,6.2) [vertexY] {$y_1$};
\node (y4) at (7.3,4.4) [vertexY] {$y_4$};
\node (y2) at (6.7,2.2) [vertexY] {$y_2$};
\node (y5) at (3.3,2.2) [vertexY] {$y_5$};
\node (y3) at (2.7,4.4) [vertexY] {$y_3$};

\draw [line width=0.01cm] (x1) -- (x2) -- (x3) -- (x4) -- (x5) -- (x1);
\draw [line width=0.01cm] (y1) -- (y2) -- (y3) -- (y4) -- (y5) -- (y1);

\draw [line width=0.04cm] (x1) -- (y1);
\draw [line width=0.04cm] (x3) -- (x4);
\draw [line width=0.04cm] (y3) -- (y4);
\draw [line width=0.01cm] (x2) -- (y4);
\draw [line width=0.01cm] (x3) -- (y2);
\draw [line width=0.01cm] (x4) -- (y5);
\draw [line width=0.01cm] (x5) -- (y3);

\end{tikzpicture}
\end{center}
\caption{Petersen graph with $E'$ in boldface}
\label{PetersenE}
\end{figure}

The following result is a link between Conjectures~\ref{MAINconjII} and \ref{5cycles}, which provided Conjecture~\ref{MAINconjII} holds, demonstrates that a lower bound on ${\rm mac}(G)$ can be used
to establish a structural result on unweighted graphs.

\begin{proposition}\label{Implies5cycles}
Conjecture~\ref{MAINconjII} implies Conjecture~\ref{5cycles}.
\end{proposition}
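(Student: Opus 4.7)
The plan is to produce, from any triangle-free subcubic graph $G$, a weighting that lets Conjecture~\ref{MAINconjII} give us exactly the desired edge set $E'$. The natural choice is a weighting by participation in $5$-cycles: for each edge $e\in E(G)$, set $w(e)$ equal to the number $c(e)$ of $5$-cycles of $G$ that contain $e$. Letting $N_5$ denote the total number of $5$-cycles of $G$, a straightforward double count gives $w(G)=\sum_{e}c(e)=5N_5$, since each $5$-cycle contributes its five edges.

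Next, I would apply Conjecture~\ref{MAINconjII} to the weighted graph $(G,w)$: it yields a cut $(A,B)$ of weight at least $\frac{4}{5}w(G)=4N_5$. I define $E'=E(G)\setminus E(A,B)$ to be the set of edges not used by the cut. Then
\[
w(E')\;=\;w(G)-w(E(A,B))\;\le\;5N_5-4N_5\;=\;N_5.
\]
Double counting incidences between $E'$ and $5$-cycles from the other side,
\[
\sum_{C\text{ a }5\text{-cycle of }G}|E(C)\cap E'|\;=\;\sum_{e\in E'}c(e)\;=\;w(E')\;\le\;N_5.
\]

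The final step is the key combinatorial observation: since every $5$-cycle is an odd cycle and the subgraph induced by the cut $(A,B)$ is bipartite, each $5$-cycle must contain at least one edge of $E'$. Therefore
\[
\sum_{C\text{ a }5\text{-cycle of }G}|E(C)\cap E'|\;\ge\;N_5,
\]
and combining with the previous inequality forces equality throughout, so $|E(C)\cap E'|=1$ for every $5$-cycle $C$ of $G$. This is exactly the conclusion of Conjecture~\ref{5cycles}.

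The only real content is the choice of weighting together with the two opposite inequalities sandwiching $\sum_C|E(C)\cap E'|$ between $N_5$ and $N_5$; I do not foresee an obstacle beyond verifying that the parity/odd-cycle argument is applied to the complement of the cut rather than to the cut itself. No special properties of subcubic graphs are used beyond their appearance in the hypothesis of Conjecture~\ref{MAINconjII}, so the implication is essentially a clean reduction from the structural statement to a weighted cut bound.
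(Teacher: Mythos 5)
Your proof is correct and follows essentially the same route as the paper: weight each edge by the number of $5$-cycles containing it, apply Conjecture~\ref{MAINconjII} to get $w(E')\le N_5$, and use the fact that every (odd) $5$-cycle must meet the complement of a cut to force equality and hence exactly one edge of $E'$ per $5$-cycle. No gaps.
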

\begin{proof}
Let $G=(V,E)$ be a triangle-free graph with maximum degree 3.  For each $e\in E$, define $w(e)$ to be the number of 5-cycles that contain  $e$ in $G.$ 
If there are exactly $c$ distinct 5-cycles in $G,$ then  $w(G)=5c.$ Let $B=(V,F)$ be a maximum weight bipartite subgraph in $G$ and let $E' =E\setminus F.$  
Note that every 5-cycle in $G$ must contain an edge from $E'$, which implies that $w(E') \ge c.$   If Conjecture ~\ref{MAINconjII} holds,
 then ${\rm mac}(G)\ge 4w(G)/5,$ which implies that $w(E') \le w(G)/5 = c. $ 
Therefore, $w(E')=c,$ which implies that every 5-cycle contains exactly one edge from $E',$ and thus Conjecture \ref{5cycles} holds.
\end{proof}

\subsection{Triangle-free graphs with arbitrary bounded maximum degree} \label{sec:Delta}

Using Shearer's randomized algorithm \cite{Shearer92} we can obtain the following bound. 

\begin{theorem}\label{Shearer}
Let $G$ be a weighted triangle-free graph with $\Delta(G)\le \Delta$. 
Then ${\rm mac}(G) \geq  s_{\Delta}\cdot w(G),$ where $ s_{\Delta}=\frac{1}{2} + \frac{1}{4\sqrt{2\Delta}}.$
\end{theorem}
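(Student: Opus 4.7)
The plan is to derive the bound from Shearer's randomized algorithm for Max Cut on unweighted triangle-free graphs of bounded maximum degree. Shearer's work gives, for any unweighted triangle-free graph $H$ with $\Delta(H) \le \Delta$, a cut of size at least $s_\Delta |E(H)|$; the task is to port this to the weighted setting.

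First I would verify that Shearer's argument yields a per-edge probability bound, namely that under the random procedure he employs (a biased random $2$-coloring followed by a local correction), each edge $e$ ends up in the cut with probability at least $s_\Delta$. This is plausible because Shearer's analysis is local: in a triangle-free graph with $\Delta(G)\le \Delta$, the neighborhood structure at the endpoints of each edge is constrained uniformly, so the per-edge contribution to the expected cut size is bounded below by the same quantity $s_\Delta$. Granted this, for the weighted graph $G$ I apply the very same randomized procedure (which ignores the weights) and compute, by linearity of expectation,
\[
\mathbb{E}[w(A,B)] \;=\; \sum_{e \in E(G)} w(e)\,\Pr[e \in \text{cut}] \;\ge\; s_\Delta \sum_{e \in E(G)} w(e) \;=\; s_\Delta\, w(G).
\]
The probabilistic method then yields a cut of weight at least $s_\Delta\, w(G)$, which is exactly the claim.

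If Shearer's bound is stated only in aggregate (total cut size) rather than per-edge form, the fallback is a blow-up reduction. By continuity it suffices to treat rational weights, and after clearing denominators we may assume $w : E(G) \to \mathbb{Z}_{>0}$. I would then build an auxiliary simple graph by replacing each edge $uv$ of weight $w(uv)$ with $w(uv)$ internally-disjoint $uv$-paths of length $2$, which is triangle-free because any triangle in the new graph would avoid the degree-$2$ subdivision vertices and hence descend to a triangle in $G$. Cuts in the auxiliary graph translate back to cuts in $G$ with the correct weighted accounting. The main obstacle in this fallback is that such a blow-up inflates the maximum degree at each original vertex $v$ from $d_G(v)$ to the weighted degree $\sum_{e \ni v} w(e)$, which can far exceed $\Delta$. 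Handling this cleanly is the delicate point that the parenthetical ``easily follows from results of Shearer'' elides; the per-edge formulation of Shearer's analysis sketched above circumvents the issue entirely and is the route I would pursue.
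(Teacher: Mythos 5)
Your primary route is exactly the paper's proof: the paper runs Shearer's two-step randomized procedure on the weighted graph, extracts from Shearer's proof of his Theorem~1 the per-edge cut probability $\theta_{uv}=\frac{1}{2}+\frac{1}{4}(\rho_u-\frac12)+\frac{1}{4}(\rho_v-\frac12)$ together with his Lemma~1 bound $\rho_x-\frac12\ge \frac{1}{2\sqrt{2\deg(x)}}$, and concludes by linearity of expectation over the edge weights. So the per-edge bound you flag as the key point does hold and your hesitation about it (and the fallback blow-up, which as you note would break the degree bound) is unnecessary.
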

\begin{proof}
{Shearer's randomized algorithm takes $G$ as an input and constructs a random cut $C$ of $G$ as follows. In Step 1, we partition $V(G)$ randomly and uniformly into two sets $A$ and $B.$
Call a vertex $x$ in $G$ {\em good} if more than half of the vertices adjacent to it lie in the other set. If exactly half of the vertices adjacent to $x$ lie in the other set call $x$ good with probability 1/2.
Otherwise call $x$ {\em bad}. In Step 2, leave the good vertices where they are and redistribute the bad vertices randomly and uniformly into $A$ and $B.$ The edges between $A$ and $B$ induce the cut $C.$
} 

It is shown in the proof of Theorem 1 in \cite{Shearer92} that an edge $uv$ of $G$ is in $C$ with probability $\theta_{uv}=\frac{1}{2} + \frac{1}{4} (\rho_u - \frac{1}{2}) + \frac{1}{4}  (\rho_v - \frac{1}{2}),$
where $\rho_x$ is the probability that $x\in \{u,v\}$ is good after Step 2 provided that $u,v$ are put into different sets in Step 1. Let $f_x=\rho_x - \frac{1}{2}$, where $x\in \{u,v\}$. It follows from the proof of Theorem 1
and from Lemma 1 in \cite{Shearer92} that for $x\in \{u,v\}$
$$f_x\ge \frac{1} {2\sqrt{2\cdot {\rm deg}(x)}}\geq  \frac{1}{2\sqrt{2\Delta}}$$ 

Observe that the expected value of the weight of $C$ is
\begin{eqnarray*}
\sum_{uv \in E} w(uv) \theta_{uv} &= &\\
W(G)/2 + \sum_{uv \in E} w(uv)(f_u+f_v)/4 &\geq &\\
W(G)/2 + (1/4\sqrt{2\Delta}) W(G) &&
\end{eqnarray*}
Thus, the expected weight of $C$ is at least $ s_{\Delta}\cdot w(G)$, {where  $ s_{\Delta}=\frac{1}{2} + \frac{1}{4\sqrt{2\Delta}}$}. Hence, $G$ has a cut with at least such a weight. 
\end{proof}

Note that we can find a cut of $G$ of weight at least $ s_{\Delta}\cdot w(G)$ in polynomial time using the derandomization method of conditional probabilities \cite{AloSpe}.
Also note that for $\Delta(G)\le 3,$ the bound in Theorem \ref{Shearer} is as follows: ${\rm mac}(G) \geq 0.602\cdot w(G)$. However, Theorem \ref{mainProb} provides a significantly better bound: ${\rm mac}(G) \geq  0.727\cdot w(G).$ This indicates that Theorem  \ref{Shearer} can be improved at least for small values of $\Delta$ {and indeed Theorem \ref{th:Y} provides such an improvement Theorem  \ref{Shearer} for small values of $\Delta$.  The following lemma will be used to prove Theorems~\ref{mainProb} and \ref{th:Y}.}


\begin{lemma}\label{mainMatch}
Let $G$ be a weighted triangle-free graph with  $\Delta(G)\le \Delta$ and let $M$ be a matching in $G$.
Then ${\rm mac}(G) \geq \frac{\Delta}{2\Delta-1} (w(G)-w(M)) + w(M) $.
\end{lemma}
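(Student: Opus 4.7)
The plan is to apply Theorem~\ref{gen} via an averaging argument over a family of $\mathcal{B}$-subgraphs of $G$, each containing~$M$. Rewriting the target as $(2\Delta-1)\cdot{\rm mac}(G) \ge \Delta\,w(G) + (\Delta-1)\,w(M)$, it suffices to exhibit $K = 2\Delta-1$ subgraphs $R_1,\ldots,R_K\in\mathcal{B}(G)$ with $M\subseteq R_i$ for each $i$ and such that every non-matching edge of $G$ lies in at least one $R_i$; summing Theorem~\ref{gen} over the $R_i$ yields $K\cdot{\rm mac}(G) \ge \tfrac{K}{2}w(G)+\tfrac{1}{2}(K\,w(M)+w(G)-w(M))$, which after substituting $K=2\Delta-1$ rearranges to the claimed bound exactly.

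To construct the $R_i$, I would first apply Vizing's theorem to $G - M$ (whose maximum degree is at most $\Delta$) to obtain matchings $D_1, \ldots, D_p$, and set $R_j = M \cup D_j$. Each $R_j$ has maximum degree at most $2$, so its components are paths and even cycles alternating between $M$-edges and $D_j$-edges. The triangle-free hypothesis plays a crucial role here: $N(u) \cap N(v) = \emptyset$ for every edge $uv\in E(G)$ forbids chords of length $2$, so short components---$3$-vertex paths and $4$-cycles---are automatically induced bipartite subgraphs of $G$ and belong to $\mathcal{B}(G)$. Also, since $G$ is triangle-free, whenever $e=xy$ with matching partners $u,v$ satisfies $uv\in E(G)$, the induced subgraph on $\{u,v,x,y\}$ is precisely the $4$-cycle $xuvyx$, which is safely in $\mathcal{B}(G)$.

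The main obstacle is controlling chords of length $\ge 3$ in longer components of $R_j$, which would violate the induced-subgraph requirement of $\mathcal{B}(G)$. I would resolve this by refining the edge coloring: whenever a class produces a chord-violating long path, split it along the offending edges, and argue that $K \le 2\Delta-1$ classes still suffice, using that each matching edge is incident to at most $2(\Delta-1)$ non-matching edges (giving $2\Delta-1$ edges per matched pair). In parallel, I would attempt a direct randomized construction in which one samples a non-matching edge $e$, forms $R=M\cup\{e\}$, and adjoins any forced correction edge (e.g.\ the partner edge $uv$ when $e=xy$ has both endpoints matched to adjacent partners $u,v$, so that the resulting component is an induced $4$-cycle); then one would verify that each non-matching edge is included with probability at least $1/(2\Delta-1)$. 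Making one of these constructions rigorous---in particular, confirming that the count $2\Delta-1$ is genuinely sufficient for the chord-avoidance constraints to be met simultaneously---is where I expect the technical heart of the proof to lie.
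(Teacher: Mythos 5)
Your averaging framework is exactly the paper's: exhibit $K=2\Delta-1$ members of ${\cal B}(G)$, each containing $M$, whose union covers $E(G)$, and sum Theorem~\ref{gen} over them; that part of your reduction is correct. The gap is in the construction of these subgraphs. Applying Vizing to $G-M$ and setting $R_j=M\cup D_j$ does not work: a component of $M\cup D_j$ is in general a long $M$-alternating path or even cycle, and such a component need not be an induced subgraph of $G$ (the subgraph induced on its vertex set can even contain an odd cycle), so $R_j\notin{\cal B}(G)$. Your proposed repairs --- splitting classes along offending edges, or a randomized selection hitting each non-matching edge with probability at least $1/(2\Delta-1)$ --- are exactly where the count $2\Delta-1$ has to be justified, and they are left unverified. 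Note also that if Vizing on $G-M$ alone sufficed, you would need only $\Delta+1$ classes and would obtain a bound far stronger than the lemma; that should have been a warning sign.

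The missing idea is to run the edge coloring \emph{after contracting $M$}. Contract each edge of $M$ to a single vertex and merge parallel edges; the resulting simple graph has maximum degree at most $2\Delta-2$ (this is precisely your observation that a matched pair meets at most $2(\Delta-1)$ non-matching edges), so Vizing gives a proper edge coloring with $c\le 2\Delta-1$ colors. Pulling a color class $M_i$ back to $G$, no two of its edges touch the same $M$-edge or the same unmatched vertex, so every component of $M\cup M_i$ has at most four vertices. Triangle-freeness then forces each such component to be an induced bipartite subgraph: the only possible extra edge among the four vertices of an alternating path $x'xyy'$ is $x'y'$, which closes it into a $4$-cycle, and since $xy$ and $x'y'$ become parallel edges under the contraction they receive the same color. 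Hence $M\cup M_i\in{\cal B}(G)$ for each $i$, and your own summation over these $c\le 2\Delta-1$ subgraphs completes the proof.
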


\begin{proof}
Let $G$ and $M$ be defined as in the statement of the theorem and $\Delta(G)\le \Delta$.
Let $G_1$ be the graph obtained  by contracting each edge $e_i \in M$ to a vertex $x_i$.
If $G_1$ has parallel edges, for every pair $u,v$ of vertices with parallel edges, delete 
all but one of the parallel edges and let the weight 
of the remaining edge between $u$ and $v$ be the sum of the original edges. We will denote the resulting graph by $G_2$.
Note that $w(G_2) = w(G) - w(M)$ and $G_2$ is a simple graph with $\Delta(G_2) \leq 2\Delta-2$.

By Vizing's theorem, $G_2$ has a proper $c$-edge-coloring, where $c\le 2\Delta-1.$
Let $M''_i$ denote the edges of color $i$ in such a proper $c$-edge-coloring ($i \in [c]$). Note that
$M''_i$ is a matching in $G_2$. Observe that $M''_i$ corresponds to an induced matching in $G$ (via $G_1$) denoted by $M_i.$
{Note that the parallel edges do not create a problem with the matchings since $G$ is triangle-free. Every component $C$ in $G[M \cup M_i]$ has  
at most four vertices (otherwise, contraction of edges of $M$ in $C$ would create a pair on non-parallel edges of $M_i$ in $G_1$, which is impossible)}. Hence,
 $G[M \cup M_i] \in {\cal B}(G)$ (as $G$ is triangle-free). Thus, Theorem~\ref{gen}  implies that
$
{\rm mac}(G) \ge \frac{w(G)+ w(M)+w(M_i)}{2}.
$
Summing the above over $i=1,2,\ldots,c$ and 
noting that every edge not in $M$ belongs to one of the matching $M_i$, we have $$c \cdot {\rm mac}(G) \ge \frac{c}{2}(w(G)+w(M))+\frac{w(G)-w(M)}{2}.$$
Hence, 
\[
\begin{array}{rcl} 
{\rm mac}(G) & \ge & \frac{w(G)+w(M)}{2}+\frac{w(G)-w(M)}{2c}\\
& \ge & \frac{w(G)+w(M)}{2}+\frac{w(G)-w(M)}{2(2\Delta-1)}\\
 & = & \frac{\Delta}{2\Delta-1} (w(G)-w(M)) + w(M) \\
\end{array}
\]
\end{proof}

\begin{theorem}\label{th:Y}
Let $G$ be a weighted triangle-free graph with  $\Delta(G)\le \Delta.$ Then
${\rm mac}(G) \geq t_{\Delta}\cdot w(G),$ where $t_{\Delta}=1/2 + (3\Delta -1)/(4\Delta^2+2\Delta-2).$
\end{theorem}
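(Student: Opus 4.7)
The plan is to combine Lemma~\ref{mainMatch} with a carefully chosen matching $M$, where the choice is guided by Vizing's theorem applied to $G$ itself (not, as in the lemma's proof, to a contracted graph).

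First, I would observe that the bound in Lemma~\ref{mainMatch} is monotone increasing in $w(M)$: rewriting it as
\[
{\rm mac}(G) \;\geq\; \frac{\Delta}{2\Delta-1}\,w(G) \;+\; \frac{\Delta-1}{2\Delta-1}\,w(M),
\]
we see that to maximize the right-hand side we want a matching of large weight. To lower-bound $w(M)$, I would apply Vizing's theorem to $G$, obtaining a proper edge-coloring with at most $\Delta+1$ color classes $M_1,\dots,M_{\Delta+1}$, each of which is a matching. Since $w(M_1)+\cdots+w(M_{\Delta+1})=w(G)$, the heaviest color class $M$ satisfies $w(M) \geq w(G)/(\Delta+1)$.

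Plugging this into the displayed inequality gives
\[
{\rm mac}(G) \;\geq\; \frac{w(G)}{2\Delta-1}\left(\Delta + \frac{\Delta-1}{\Delta+1}\right) \;=\; \frac{(\Delta^2+2\Delta-1)\,w(G)}{(2\Delta-1)(\Delta+1)}.
\]
It then remains a purely algebraic verification that
\[
\frac{\Delta^2+2\Delta-1}{(2\Delta-1)(\Delta+1)} \;=\; \frac{1}{2} + \frac{3\Delta-1}{4\Delta^2+2\Delta-2},
\]
which follows by writing the right-hand side over the common denominator $2(2\Delta-1)(\Delta+1)=4\Delta^2+2\Delta-2$ and checking numerators.

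I do not anticipate a real obstacle: the only conceptual step is recognizing that Lemma~\ref{mainMatch} rewards us for finding a heavy matching, and that Vizing's theorem—applied directly to $G$, where $\Delta(G)\le\Delta$—supplies one of weight at least $w(G)/(\Delta+1)$. (Note that we do not need triangle-freeness for this matching step; it is already built into Lemma~\ref{mainMatch}.) The rest is bookkeeping, and the final fraction simplifies cleanly to the claimed $t_\Delta$.
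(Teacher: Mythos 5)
Your proof is correct and takes essentially the same approach as the paper: apply Vizing's theorem to $G$ itself to obtain $\Delta+1$ matchings partitioning $E(G)$, then invoke Lemma~\ref{mainMatch}. The only cosmetic difference is that the paper averages the $\Delta+1$ resulting inequalities whereas you apply the lemma to the single heaviest colour class; since the maximum weight is at least the average $w(G)/(\Delta+1)$, the two give the identical bound $t_\Delta\cdot w(G)$.
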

\begin{proof}
By Vizing's theorem, $G$ has a $(\Delta + 1)$-coloring. Let $M_i$ be the edges with color $i \in [\Delta +1],$ and note that $M_i$ is a matching. Using Lemma \ref{mainMatch} for each $M_i$,
we obtain ${\rm mac}(G) \geq \frac{\Delta}{2\Delta-1} (w(G)-w(M_i)) + w(M_i) $ for every $i\in [\Delta + 1].$ 
By summing up these inequalities and simplifying the resulting inequality (by using the fact that 
$w(M_1)+w(M_2)+\cdots +w(M_{\Delta+1})=w(G)$), we obtain the following:
\[
\begin{array}{rcl} \2
{\rm mac}(G) & \geq & \frac{1}{\Delta +1} \sum_{i=1}^{\Delta+1} \left( \frac{\Delta}{2\Delta-1} (w(G)-w(M_i)) + w(M_i) \right) \\ \2
 & = &  \frac{1}{\Delta +1} \sum_{i=1}^{\Delta+1} \left( \frac{\Delta}{2\Delta-1} w(G) + \frac{\Delta -1}{2\Delta-1} w(M_i) \right) \\ \2
 & = & \frac{\Delta}{2\Delta-1} w(G) +  \frac{1}{\Delta +1} \times \frac{\Delta -1}{2\Delta-1} w(G) \\ \2
 & = & \left( \frac{1}{2} + \frac{3\Delta-1}{4\Delta^2+2\Delta-2} \right) w(G)  \\ 
\end{array}
\]
\end{proof}

Let us compare the bounds of Theorems \ref{Shearer} and \ref{th:Y}. We have $t_{\Delta}>s_{\Delta}$ if and only if $\Delta \le 16.$
In fact, a selected number of values of $s_{\Delta}$ and $t_{\Delta}$ can be seen below.
\begin{center}
\begin{tabular}{|c||c|c|}\hline
$\Delta$ & $s_{\Delta}$ & $t_{\Delta}$ \\ \hline \hline
 1 & 0.6768 & 1.0000 \\ \hline 
 2 & 0.6250 & 0.7778 \\ \hline 
 3 & 0.6021 & 0.7000 \\ \hline 
 4 & 0.5884 & 0.6571 \\ \hline 
$\cdots$ & $\cdots$ & $\cdots$ \\ \hline
16 & 0.5442 & 0.5446 \\ \hline 
17 & 0.5429 & 0.5421 \\ \hline 
\end{tabular}
\end{center}

Since $t_3=0.7,$ we have ${\rm mac(}G) \geq 0.7 w(G)$ for a triangle-free $G$ with $\Delta(G)\le 3.$ The gap between 0.7 and the coefficient 0.8 of Conjecture \ref{MAINconjI} is just 0.1, but it does not seem to be easy to bridge this gap as the proof of Theorem \ref{mainProb} in the next section shows.

Lemma~\ref{mainMatch} seems interesting in its own right and gives rise to the following open problem.

\begin{openProblem}\label{openProbMatch}
For each $\Delta \geq 1$ determine the maximum value, $c_{\Delta}$, such that for every
edge-weighted triangle-free graph $G$ with maximum degree at most  $\Delta$ and matching $M$ in $G,$ the following holds.
\[
{\rm mac}(G) \geq c_{\Delta} (w(G)-w(M)) + w(M) 
\]
\end{openProblem}

The following proposition determines  $c_1$, $c_2$ and $c_3$ precisely.

\begin{proposition}\label{mainMatchTight}
$c_1 = 1$, $c_2=\frac{2}{3}$ and $c_3 = 0.6$.
\end{proposition}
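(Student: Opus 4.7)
The lower bounds $c_1\ge 1$, $c_2\ge 2/3$, $c_3\ge 3/5$ follow immediately from Lemma~\ref{mainMatch} with $\Delta=1,2,3$, since $\Delta/(2\Delta-1)$ takes the values $1$, $2/3$, $3/5$ respectively. The substance of the proposition is the matching upper bounds, which I plan to establish by producing tight weighted examples.

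For $\Delta=1$, I observe that $c_1\le 1$ is automatic: every graph with $\Delta(G)\le 1$ is a disjoint union of edges, so ${\rm mac}(G)=w(G)$, and taking $M=\emptyset$ in the defining inequality forces $c_1\le 1$. For $\Delta=2$, I would take $G=C_5$ with unit edge weights and let $M$ be a matching of two non-adjacent edges; then $w(G)=5$, $w(M)=2$, ${\rm mac}(C_5)=4$, and $({\rm mac}(G)-w(M))/(w(G)-w(M))=2/3$, which pins $c_2\le 2/3$.

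The core case is $c_3\le 3/5$. My candidate would be the Petersen graph $G$ with outer pentagon on $a_1,\dots,a_5$, inner pentagram with edges $b_ib_{i+2}$ (indices mod~$5$), and spokes $a_ib_i$. I would weight every spoke by $2$ and every pentagon or pentagram edge by $1$, and take $M$ to be the five spokes, so $w(G)=20$ and $w(M)=10$. The claim then reduces to showing ${\rm mac}(G)=16$, which gives the desired ratio $6/10=3/5$.

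To prove ${\rm mac}(G)\ge 16$ I would exhibit the bipartition with one side $\{a_1,a_3,a_5,b_2,b_3,b_4\}$: a direct check shows it cuts four spokes, four pentagon edges, and four pentagram edges, for weight $2\cdot 4+4+4=16$. For the matching upper bound, let $x_s,x_p,x_r$ denote the number of spoke, pentagon, and pentagram edges cut by an arbitrary bipartition, so the cut weight is $2x_s+x_p+x_r$. If $x_s\le 4$, the trivial bounds $x_p,x_r\le 4$ (max cut of $C_5$) already give weight at most $16$. The hard case is $x_s=5$: then $b_i$ lies opposite $a_i$ for every $i$, so via $b_i=\neg a_i$ both the pentagon-cuts (pairs $a_i,a_{i+1}$) and the pentagram-cuts (pairs $a_i,a_{i+2}$) are read off a single $2$-coloring of $\{a_1,\dots,a_5\}$; since $\{\pm 1,\pm 2\}=\mathbb{Z}_5\setminus\{0\}$, the pentagon together with the pentagram edges jointly realize $K_5$ on $\{a_1,\dots,a_5\}$, hence $x_p+x_r$ is bounded by the maximum number of bichromatic edges in a $2$-coloring of $K_5$, namely $\max_k k(5-k)=6$, again yielding weight at most $10+6=16$. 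The main obstacle is recognizing this $K_5$-identification; once it is in place, the rest of the verification is routine.
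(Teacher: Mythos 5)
Your proposal is correct and follows essentially the same route as the paper: lower bounds from Lemma~\ref{mainMatch}, and the same tight examples ($K_2$, the unit-weight $C_5$ with a size-two matching, and the Petersen graph with the spoke perfect matching) for the upper bounds. The only difference is cosmetic: the paper puts weight $10$ on the spokes so that every maximum cut must contain all of them, whereas you put weight $2$ and handle the case of at most four cut spokes separately; your $K_5$-identification in the five-spoke case is a clean substitute for the paper's "four edges from one pentagon force at most two from the other" count.
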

\begin{proof}
By Lemma~\ref{mainMatch} we note that $c_1 \geq 1$, $c_2 \geq \frac{2}{3}$ and $c_3 \geq 0.6$.
Clearly $c_1 \leq 1$ (consider unweighted $K_2$ and $M=\emptyset$), which implies that $c_1=1$. 

If $G$ is an unweighted $5$-cycle and $M$ is a matching
of size two in $G$, then ${\rm mac}(G) = 4 = \frac{2}{3} (|E(G)|-|E(M)|) + |E(M)|$, 
which implies that  $c_2 \leq \frac{2}{3}$. Therefore $c_2 = \frac{2}{3}$.

Let $G$ be the Petersen graph depicted in Figure~\ref{PetersenG}. That is, the edge set consists of the edges of
two $5$-cycles, $C_x=x_1 x_2 x_3 x_4 x_5 x_1$ and $C_y=y_1 y_2 y_3 y_4 y_5 y_1$ and the matching $M=\{ x_1 y_1, x_2 y_4, x_3 y_2, x_4 y_5, x_5 y_3 \}$.
Let the weight of the edges in $M$ be 10 and the weight of all other edges in $G$ be 1.  Then $w(M)=50$ and $w(G)-w(M)=10$.
This implies that any maximum weight cut, $(A,B)$, must include all edges on $M$.  As 
$C_x$ and $C_y$ are both $5$-cycles at most $4$ edges from each can belong to $(A,B)$.
If four edges from $C_x$ belongs to $(A,B)$ then we note that at most 2 edges from $C_y$ belong to $(A,B)$ (if all edges in $M$ belong to $(A,B)$) 
and analogously
if four edges from $C_y$ belongs to $(A,B)$ then we note that at most 2 edges from $C_x$ belong to $(A,B)$. This implies that 
${\rm mac}(G) \leq w(M)+6 = w(M) + 0.6 (w(G)-w(M))$. Therefore $c_3 \leq 0.6$, which implies that $c_3=0.6$.
\end{proof}

\begin{figure}[hbtp]
\begin{center}
\tikzstyle{vertexDOT}=[scale=0.25,circle,draw,fill]
\tikzstyle{vertexY}=[circle,draw, top color=gray!5, bottom color=gray!30, minimum size=11pt, scale=0.8, inner sep=0.99pt]
\begin{tikzpicture}[scale=0.5]
\node (x1) at (5,8) [vertexY] {$x_1$};
\node (x2) at (9,5) [vertexY] {$x_2$};
\node (x3) at (8,1) [vertexY] {$x_3$};
\node (x4) at (2,1) [vertexY] {$x_4$};
\node (x5) at (1,5) [vertexY] {$x_5$};

\node (y1) at (5,6.2) [vertexY] {$y_1$};
\node (y4) at (7.3,4.4) [vertexY] {$y_4$};
\node (y2) at (6.7,2.2) [vertexY] {$y_2$};
\node (y5) at (3.3,2.2) [vertexY] {$y_5$};
\node (y3) at (2.7,4.4) [vertexY] {$y_3$};

\draw [line width=0.01cm] (x1) -- (x2) -- (x3) -- (x4) -- (x5) -- (x1);
\draw [line width=0.01cm] (y1) -- (y2) -- (y3) -- (y4) -- (y5) -- (y1);

\draw [line width=0.04cm] (x1) -- (y1);
\draw [line width=0.04cm] (x2) -- (y4);
\draw [line width=0.04cm] (x3) -- (y2);
\draw [line width=0.04cm] (x4) -- (y5);
\draw [line width=0.04cm] (x5) -- (y3);

\end{tikzpicture}
\end{center}
\caption{Petersen graph $G$ and matching $M$ in boldface}
\label{PetersenG}
\end{figure}
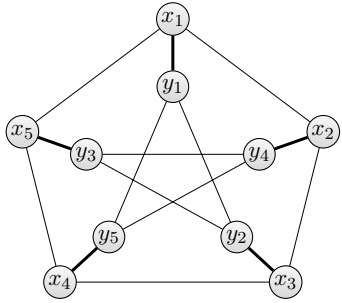

Proposition~\ref{match} implies that $c_{\Delta} \geq 0.5$ for all $\Delta \geq 1$, which by Theorem~\ref{mainMatchTight} implies
that the following holds.
\[
0.6 = c_3 \geq c_4 \geq c_5 \geq \cdots \geq 0.5
\]
We will finish this section with the following asymptotic result for $c_{\Delta}.$

\begin{proposition}
We have $\lim_{\Delta \rightarrow \infty} c_{\Delta} = 0.5.$
\end{proposition}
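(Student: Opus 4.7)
The plan is to sandwich $c_\Delta$ between $\tfrac12$ and $\tfrac12 + o(1)$ as $\Delta\to\infty$.

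For the lower bound, apply Proposition~\ref{match} to a maximum weight matching $M^\ast$ of $G$: for every matching $M\subseteq E(G)$ we have $w(M^\ast)\ge w(M)$, and hence
$$\mathrm{mac}(G)\ \ge\ \frac{w(G)+w(M^\ast)}{2}\ \ge\ \frac{w(G)+w(M)}{2}\ =\ \tfrac{1}{2}\bigl(w(G)-w(M)\bigr)+w(M),$$
which already gives $c_\Delta\ge \tfrac12$ for every $\Delta\ge 1$.

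For the upper bound I would reduce to a pure existence question. First, the family of triangle-free graphs with $\Delta(G)\le\Delta$ is non-decreasing in $\Delta$, so $c_\Delta$ is non-increasing. Second, taking $M=\emptyset$ and all edge-weights equal to $1$ in the defining inequality collapses it to $c_\Delta\le \mathrm{mac}(G)/|E(G)|$ for every triangle-free $G$ with $\Delta(G)\le\Delta$. Thus it suffices to exhibit, for arbitrarily large $d$, a triangle-free $d$-regular graph $G_d$ satisfying $\mathrm{mac}(G_d)/|E(G_d)|=\tfrac12+o_d(1)$.

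The cleanest choice is the Lubotzky--Phillips--Sarnak Ramanujan graphs $X^{p,q}$: for an odd prime $p$ and a suitable large prime $q$, $X^{p,q}$ is $(p+1)$-regular, has girth $\Omega(\log |V|)$ (hence is triangle-free), and smallest adjacency eigenvalue $\lambda_{\min}\ge -2\sqrt{p}$. The standard spectral upper bound $\mathrm{mac}(G)\le \tfrac14|V(G)|(d-\lambda_{\min})$ then yields
$$\frac{\mathrm{mac}(X^{p,q})}{|E(X^{p,q})|}\ \le\ \frac12+\frac{2\sqrt{p}}{p+1}\ =\ \frac12+O\!\left(\frac{1}{\sqrt{d}}\right),\qquad d=p+1.$$
Letting $p\to\infty$ through primes, together with the monotonicity of $c_\Delta$, gives $\lim_{\Delta\to\infty} c_\Delta\le\tfrac12$, which combined with the lower bound completes the proof.

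The only non-routine ingredient is the existence of triangle-free bounded-degree graphs whose max-cut ratio approaches $\tfrac12$; LPS Ramanujan graphs supply this, but a probabilistic construction would work equally well (random $d$-regular graphs w.h.p.\ have at most $O_d(1)$ triangles and $\mathrm{mac}/|E|=\tfrac12+O(1/\sqrt d)$ by concentration of the bottom eigenvalue; deleting one edge per triangle changes the ratio by $o(1)$). Neither source is intrinsic to the combinatorial framework developed earlier in the paper, which is the main aesthetic obstacle, but either suffices to pin down the limit.
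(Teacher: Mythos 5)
Your proof is correct, but it takes a genuinely different route to the upper bound. The paper argues by contradiction: it invokes Alon's 1996 result that for every $m$ there is a triangle-free graph with $m$ edges in which no bipartite subgraph has $m/2+c'm^{4/5}$ edges; since the limit $L$ of the non-increasing sequence satisfies $L\le c_{\Delta(G)}$ for whatever maximum degree that graph happens to have, taking $m$ large enough that $c'm^{4/5}<(L-\tfrac12)m$ violates the defining inequality with $M=\emptyset$ (in fact with any matching). This is degree-oblivious and very short, but yields no rate. You instead produce, for each large $d$, an explicit triangle-free $d$-regular witness with $\mathrm{mac}/|E|=\tfrac12+O(1/\sqrt d)$ via the eigenvalue bound, then use monotonicity of $c_\Delta$; this buys a quantitative decay $c_\Delta\le\tfrac12+O(1/\sqrt\Delta)$ along primes (and hence for all $\Delta$ via Bertrand's postulate), at the cost of importing Ramanujan graphs or Friedman-type concentration for random regular graphs. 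Two small points to tighten: you must take the \emph{non-bipartite} LPS graphs $X^{p,q}$ (those with $p$ a quadratic residue mod $q$), since the bipartite ones have $\lambda_{\min}=-(p+1)$ and max-cut ratio $1$; and your reduction $c_\Delta\le\mathrm{mac}(G)/|E(G)|$ uses $M=\emptyset$, which is legitimate (the paper itself does this for $c_1$). Your lower bound via Proposition~\ref{match} is exactly the paper's.
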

\begin{proof}
Since the sequence is monotonically decreasing and bounded by 0.5, the limit exists and it is at least 0.5. Suppose the limit $L$ is larger than 0.5.

Alon \cite{Alon96} proved that there exists some absolute positive constant $c'$ so that for every $m$ there exists a triangle-free graph $G$ with $m$ 
edges for which no bipartite subgraph has $m/2+c'\cdot m^{4/5}$ edges.
Let all edges of $G$ be of weight 1 and let $M$ be a matching of $G.$   If $|E(G)|$ is large enough for $c'm^{4/5} < (L-0.5)m$ to hold, we have
\[
\begin{array}{rcl} 
{\rm mac}(G) & \le & m/2 + c'm^{4/5}\\
& < & Lm \\
& \leq & Lm + (1-L)|M| \\
 & = & L(m-|M|) + |M|  \\
 & \le & c_{\Delta(G)}(m-|M|) + |M| \\
\end{array}
\]
This contradiction completes the proof.
\end{proof}


\section{Proofs of Theorems~\ref{mainProb} and \ref{mainProbTree}}\label{sec:proofs}

We first need the following lemmas.

\begin{lemma}\label{girth2}
Let $k$ be any positive integer and let $T$ be any spanning tree in a graph $G$ and let $e^* \in E(T)$ be arbitrary.
If for all $e \in E(G) \setminus E(T)$ we have that $T+e$ contains no odd cycle of length $2k-1$ or less, then 
${\rm mac}(G)\ge \frac{w(G)}{2} + \frac{k-1}{2k} w(T) + \frac{1}{2k} w(e^*)$.
\end{lemma}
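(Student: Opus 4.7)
The plan is to adapt the slab construction from the proof of Theorem~\ref{girth} with two twists: root $T$ at an endpoint of $e^*$, and modify one of the $k$ vertex partitions so that $e^*$ appears in all $k$ of the associated bipartite subgraphs rather than only $k-1$ of them.

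Concretely, I would root $T$ at the endpoint $r$ of $e^*$ for which the other endpoint $u$ becomes a child of $r$, and let $L_i$ denote the vertices at distance $i$ from $r$ in $T$. For each $j\in\{0,\ldots,k-1\}$, let $\mathcal{P}_j$ be the partition of $V(G)$ whose parts are the vertex sets of the connected components of the forest obtained from $T$ by removing every tree edge between $L_i$ and $L_{i+1}$ with $i\equiv j\pmod k$. I would then modify $\mathcal{P}_0$ by replacing its singleton part $\{r\}$ and the part $V(U)$ containing $u$ (the subtree rooted at $u$ truncated at $L_k$) by their union $\{r\}\cup V(U)$; this simply merges two parts along the edge $e^*$. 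Finally, set $G_j:=\bigsqcup_{V\in\mathcal{P}_j}G[V]$; by construction each connected component of $G_j$ is an induced subgraph of $G$, so the remaining task is to verify bipartiteness.

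The main technical step---the one I expect to be the crux---is showing that every $G[V]$ is bipartite. For this I would observe that each $V\in\mathcal{P}_j$ induces a connected subtree $T[V]$ of $T$ of diameter at most $2k-2$: in the standard parts this follows from the depth bound $k-1$, and in the merged part $\{r\}\cup V(U)$ it holds because any two vertices of $V(U)$ lie at $T$-distance at most $2(k-1)$ from each other while $r$ lies at $T$-distance at most $k$ from every such vertex. Taking $T[V]$ as a spanning tree of $G[V]$ and using the convexity of subtrees (so that $d_{T[V]}(x,y)=d_T(x,y)$ for $x,y\in V$), each non-tree edge $xy\in E(G[V])\setminus E(T[V])$ produces a fundamental cycle of length $d_T(x,y)+1\le 2k-1$---which coincides with the unique cycle of $T+xy$. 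The hypothesis of the lemma then forces this cycle to be even, so every fundamental cycle is even and hence $G[V]$ is bipartite, yielding $G_j\in\mathcal{B}(G)$.

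With $G_j\in\mathcal{B}(G)$ for each $j$, Theorem~\ref{gen} gives $\mathrm{mac}(G)\ge(w(G)+w(G_j))/2$. I would then sum these $k$ inequalities and use the accounting that every tree edge other than $e^*$ is cut in exactly one $\mathcal{P}_j$ and so survives in $k-1$ of the $G_j$, while $e^*$ is cut only in the original $\mathcal{P}_0$ but is reinstated by the merge and therefore survives in all $k$ of the $G_j$. This yields $\sum_j w(G_j)\ge(k-1)w(T)+w(e^*)$, and dividing the resulting inequality by $k$ produces the desired bound $\mathrm{mac}(G)\ge\frac{w(G)}{2}+\frac{k-1}{2k}w(T)+\frac{1}{2k}w(e^*)$.
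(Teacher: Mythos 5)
Your proof is correct and follows essentially the same route as the paper's: the same slab decomposition of $T$ into $k$ families of low-diameter subtrees, bipartiteness of each part forced by the parity hypothesis on fundamental cycles, Theorem~\ref{gen} applied to each $G_j$, and the same averaging over $j$. The only cosmetic difference is how $e^*$ is kept in all $k$ subgraphs: the paper levels $T$ by distance to the \emph{nearer} endpoint of $e^*$ (so $L_0$ contains both endpoints and $e^*$ is never a slab edge, at the cost of components of diameter up to $2k-1$), whereas you root at one endpoint and merge the two parts of $\mathcal{P}_0$ across $e^*$.
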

\begin{proof}
Let $G$, $T$, $e^*$ and $k$ be defined as in the lemma.
Let $e^* = r_1 r_2$ and let $L_i$ contain the vertex $u$ if and only if $i=\min\{d_T(r_1,u),d_T(r_2,u)\}$,
where $d_T(r_i,u)$ denotes the distance from $r_i$ to $u$ in $T$.
Note that $L_0 = \{r_1,r_2\}$.

For $j=0,1,\ldots,k-1$, let $G_j$ be the subgraph induced by the edges of $T$ minus those between $L_i$ and $L_{i+1}$ for
all $i = j$ (mod $k$). 

Let $T^*$ be an arbitrary component in $G_j$ (for any $j\in \{0,1,\ldots,k-1\}$). As $T^*$ is a subgraph of $T$ we note that 
$T^*$ is a tree. 
Let $e \in E(G)\setminus E(T)$ have both end-points in $V(T^*)$. We will now show that $T^* + e$ contains no odd-cycle.
Let $C$ be the unique cycle in $T^* + e$. By the construction of $G_j$ we note that the path $C-e$ has length at most $2k-1$ 
(in fact, if $C$ does not contain $e^*,$ then the length is at most $2k-2$)
and that the cycle $C$ therefore has length at most $2k$.
As $T+e$ contains no odd cycle of length $2k-1$ or less (and $2k$ is even) we note that $T^* + e$ contains no odd-cycle.

Therefore $G[V(T^*)]$ is bipartite,
which implies that $G[V(T^*)]$ belongs to ${\cal B}(G)$ and and by Theorem~\ref{gen}, 
${\rm mac}(G)\ge (w(G)+w(G_j))/2$ for every $j=0,1,\ldots,k-1.$ 

As $e^*$ belongs to all the $k$ subgraphs $G_j$ and every edge 
of $T-e^*$ belongs to $k-1$ of the $k$ subgraphs $G_j$, we note that summing the equations 
${\rm mac}(G)\ge (w(G)+w(G_j))/2$ for all $j=0,1,\ldots,k-1$ gives us the following: 
\[
k  \cdot {\rm mac}(G)\ge k \; \frac{w(G)}{2} + \sum_{j=0}^{k-1} \frac{w(G_j)}{2} = k \; \frac{w(G)}{2} + (k-1) \frac{w(T)}{2} + \frac{w(e^*)}{2}
\]
Dividing the above inequality by $k$ gives us the desired bound.
\end{proof}

\begin{lemma}\label{lemmaProb}
Let $G$ be a subcubic edge-weighted graph and let $T$ be a spanning tree in $G$.
Let $r$ be the length of a shortest odd cycle in $T+e$ for any edge $e \in E(G) \setminus E(T)$.
Let $p$ be arbitrary such that $0 \leq p \leq 1$. Then the following bound holds.
\[
{\rm mac}(G)\ge \frac{p+1}{2} w(T) + \frac{1-p^{r-1}}{2} (w(G)-w(T)).
\]
\end{lemma}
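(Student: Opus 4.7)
The plan is to construct a random two-coloring of $V(G)$ driven by the spanning tree $T$, arranged so that each tree edge lands in the cut with probability exactly $(p+1)/2$, while each non-tree edge lands in the cut with probability at least $(1-p^{r-1})/2$. Linearity of expectation, together with the existence of a cut achieving the expected weight, then yields the claim.

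Concretely, I would root $T$ at an arbitrary vertex $r_0$ and set $q := (p+1)/2 \in [1/2,1]$. For each tree edge $e$, draw an independent Bernoulli random variable $Y_e$ with $\Pr[Y_e=1] = q$. Define a coloring $c : V(G) \to \{0,1\}$ by $c(r_0) = 0$ and $c(v) \equiv \sum_{e \in P_v} Y_e \pmod{2}$, where $P_v$ is the unique tree path from $r_0$ to $v$. In the induced random cut, a tree edge $e=uv$ is cut iff $Y_e=1$, so its cut probability equals $q = (p+1)/2$. For a non-tree edge $uv$ with tree path $P$ of length $n$, the telescoping $c(u)-c(v) \equiv \sum_{e' \in P} Y_{e'} \pmod 2$ shows that $uv$ is cut iff $\sum_{e' \in P} Y_{e'}$ is odd.

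The key calculation is the standard parity identity. Since $\mathbb{E}[(-1)^{Y_{e'}}] = 1-2q = -p$, independence gives $\mathbb{E}\bigl[(-1)^{\sum Y_{e'}}\bigr] = (-p)^n$, and hence $\Pr[\sum Y_{e'} \text{ is odd}] = \tfrac{1 - (-p)^n}{2}$. The unique cycle in $T+uv$ has length $\ell = n+1$. If $\ell$ is odd then $n$ is even, so $(-p)^n = p^n$, and the cut probability is $\tfrac{1-p^{\ell-1}}{2} \ge \tfrac{1-p^{r-1}}{2}$ since $\ell \ge r$ and $p \in [0,1]$. If $\ell$ is even then $n$ is odd, so $(-p)^n = -p^n$, and the cut probability is $\tfrac{1+p^n}{2} \ge \tfrac{1}{2} \ge \tfrac{1-p^{r-1}}{2}$. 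Summing over all edges by linearity produces an expected cut weight of at least $\tfrac{p+1}{2}\,w(T) + \tfrac{1-p^{r-1}}{2}(w(G)-w(T))$, so a cut of at least this weight exists.

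The main obstacle is spotting the reparametrization $q = (p+1)/2$: only this choice enforces $1-2q = -p$, which is precisely what makes $(1-2q)^n$ collapse to $p^{\ell-1}$ on odd-cycle chords (where $n=\ell-1$ is even) while flipping sign favorably on even-cycle chords. Once this parametrization is identified, the case analysis is immediate and, somewhat curiously, does not require the hypothesis $\Delta(G) \le 3$.
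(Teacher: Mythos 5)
Your proof is correct, and at bottom it builds the same random cut as the paper's proof, just described and analysed differently. The paper keeps each edge of $T$ independently with probability $p$, properly $2$-colours each component of the kept forest, and flips each component's colouring with an independent fair coin; a kept tree edge is then cut with probability $1$ and an unkept one with probability $1/2$, giving $p+(1-p)/2=(p+1)/2$ per tree edge. The resulting tree-edge cut indicators are exactly your i.i.d.\ Bernoulli$\bigl((p+1)/2\bigr)$ variables $Y_e$ (a kept edge forces $Y_e=1$; for the unkept edges, which form a tree after contracting components, the XORs of the independent component flips are i.i.d.\ fair bits), so the two cut distributions coincide. The difference is in handling non-tree edges: the paper conditions on whether the endpoints land in the same component and bounds the single bad event (same component and odd fundamental cycle, probability at most $p^{r-1}$, since all $|C_e|-1\ge r-1$ path edges must be kept), whereas you compute the cut probability exactly as $\tfrac{1-(-p)^n}{2}$ via $\mathbb{E}\bigl[(-1)^{\sum Y_{e'}}\bigr]=(1-2q)^n=(-p)^n$. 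Your route is slightly sharper (exact probabilities, and it shows chords with even fundamental cycle are cut with probability strictly above $1/2$) and arguably cleaner; both yield the stated bound. Your remark that $\Delta(G)\le 3$ is never used is also consistent with the paper, whose argument likewise makes no use of the degree hypothesis.
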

\begin{proof}
  Let $G$, $T$, $r$ and $p$ be defined as in the lemma.  Pick each edge of $T$ with probability $p$ and denote the
resulting set of edges by $E^*$. Let $T_1^*,T_2^*,\ldots,T_l^*$ be the connected components of the graph 
$G^* = (V(G),E^*)$. Note that all $T_i^*$ are subtrees of $T$ (some of which may contain only one vertex). 

Let $(X_i,Y_i)$ be a bipartition of $T^*_i$ for all $i=1,2,\ldots,l.$ 
For each $i\in [l],$ randomly and uniformly assign $X_i$ color 1 or 2 and $Y_i$ the opposite color.
Let $A$ be all vertices of color 1 and let $B$ be all vertices of color 2. 
Now every edge in $E^*$ lies in the cut induced by $(A,B)$.
Consider an arbitrary edge $e \in E(G) \setminus E^*$ and let $e=uv$. 
Assume that $u \in V(T^*_i)$ and $v \in V(T^*_j)$.
We will show the following claims.

\2

{\bf Claim 1:} {\em If $e \in E(T) \setminus E^*$, then $e$ lies in the cut induced by $(A,B)$ with probability $\frac{1}{2}$ (given $E^*$). }

\2

{\em Proof of Claim~1:} Note that $i \not= j$, as adding the edge $e$ to $G^*$ does not create a cycle (all edges belong to the tree
$T$).  Therefore $u$ and $v$ will be assigned colors 1 and 2 randomly and independently. This completes the proof of Claim~1.

\2

{\bf Claim 2:} {\em If $e \in E(G) \setminus E(T)$, then $e$ lies in the cut induced by $(A,B)$ with probability
at least $\frac{1-p^{r-1}}{2}$.}

\2

{\em Proof of Claim~2:} Let $C_e$ denote the unique cycle in $T+e$.

First consider the case when $i=j$ and $|E(C_e)|$ is even.  
In this case $e$ lies in the cut induced by $(A,B)$ with probability 1 (given $E^*$).

Now consider the case when $i=j$ and $|E(C_e)|$ is odd. By the definition of $r$ in the statement of the lemma
we must have $|E(C_e)| \geq r$. Furthermore all edges in $C_e - e$ belong to $E^*$. So this case only happens with 
probability at most $p^{r-1}$ (as each edge in $C_e - e$ has probability $p$ of belonging to $E^*$).

Finally, if $i \not=j$ then $e$ lies in the cut induced by $(A,B)$ with probability $\frac{1}{2}$ (given $E^*$).

So with probability at least $1-p^{r-1}$, we are in a case where $e$ will belong to the cut induced by $(A,B)$ with probability at least 1/2. 
Therefore $e$ will belong to the cut induced by $(A,B)$ with probability at least $\frac{1}{2} \times (1-p^{r-1})$.
This completes the proof of Claim~2.

\2

We now return to the proof of Lemma~\ref{lemmaProb}.
If $e \in E(T)$ then by Claim~1 and the law of total probability, the probability that $e$ belongs to the cut induced by $(A,B)$ is 
$p + (1-p)\frac{1}{2} = \frac{1+p}{2}$.
By Claim~2, if $e \in E(G) \setminus E(T)$ then 
$e$ lies in the cut induced by $(A,B)$ with probability at least $\frac{1-p^{r-1}}{2}$.
This completes the proof.
\end{proof}

Recall the statement of Theorem~\ref{mainProb}.

\2

\noindent
{\bf Theorem~\ref{mainProb}.} {\em Let $G$ be an edge-weighted triangle-free graph with $\Delta(G) \leq 3$.
Then ${\rm mac}(G) \geq \frac{8}{11}  w(G)$.
}

\begin{proof}
We will first show that we may restrict our attention to $3$-regular triangle-free graphs.
Let $G'$ be equal to $K_{3,3}$ where one edge $uv$ has been subdivided. That is, $uv$ has been replaced by 
a path $uwv$. For every vertex, $s \in G$ we add $3-d_G(s)$ copies of $G'$ to $G$ and add an edge from each $w$-vertex in
the $G'$s to $s$. Note that the resulting graph is $3$-regular and triangle-free.
Furthermore giving all the new edges a weight of zero, shows that if the theorem holds for this new graph then 
it also holds for $G$. We may therefore without loss of generality assume that $G$ is $3$-regular.

By Brook's Theorem we note that $\chi(G) \leq 3$. Let $c:\ V(G)\rightarrow \{1,2,3\}$ be a proper 3-coloring 
of $G$ {and let $V_1,V_2,V_3$ be the color classes of $c$.}
For a given vertex $v \in V (G),$ if a color appears exactly once in $N(v)$ {(i.e. one vertex of $N(v)$ has a color $i$ and the other two vertices of $N(v)$ have a color $j\ne i$),}
we let $s(v)$ be the
neighbor of $v$ with that color. Otherwise, $s(v)$ is not defined.

Define the digraph $D^*$ such that $V(D^*)=V(G)$  and the arc set of $D^*$ is as follows:
\[
A(D^*) = \{ v s(v) \; | \; v\in V(G) \; \& \; \mbox{$s(v)$ is defined} \}
\]

{Let $\Delta^+(D^*)$ denote  the maximum out-degree of $D^*$.} Note that $\Delta^+(D^*) \leq 1$ by the construction of $D^*$ and that $D^*$ may contain $2$-cycles. If $uvu$ is a 
$2$-cycle in $D^*$ then $s(u)=v$ and $s(v)=u$. Let $G^* = UG(D^*)$ i.e. $G^*$ is the underlying graph of $D^*$
and contains all edges of the form $vs(v)$, where $v \in V(G)$. 

For every edge $e \in E(G),$ let $V^*(e) = \{ v \; | \; vs(v)=e \}$ and let
$A_i = \{ e \; | \; |V^*(e)|=i \}$ for $i=0,1,2$.
That is, $A_0$ contains all edges $e$ that are not of the form $v s(v)$ for any $v,$ 
$A_2$ contains all edges $uv$ where $s(u)=v$ and $s(v)=u$ and $A_1 = E(G) \setminus (A_0 \cup A_2)$.
Also note that $E(G^*) = A_1 \cup A_2$.
We will now prove the following claims.

\2

{\bf Claim A:} {\em  ${\rm mac}(G) \ge w(A_0) + \frac{2}{3} w(A_1) + \frac{1}{3} w(A_2)$.}

\2

{\em Proof of Claim~A:} For all $i\in \{1,2,3\}$, we define
\[
C_i = E(G) \setminus \{ v s(v) \; | \;  v \in V_i \; \& \; \mbox{$s(v)$ is defined} \}.
\]
Note that $C_i$ induces a bipartite graph as every vertex in $V_i$ only has edges to one of the two other sets.
So ${\rm mac}(G) \geq w(C_i)$ for $i=1,2,3$. Note that 
every edge in $A_i$ appears in $3-i$ of the sets $C_1,C_2,C_3$, which implies the following:
\[
{\rm mac}(G) \geq \frac{1}{3} ( w(C_1) + w(C_2) + w(C_3) ) = w(A_0) + \frac{2}{3} w(A_1) + \frac{1}{3} w(A_2).
\]
This completes the proof of Claim~A.

\2

{\bf Claim B:} {\em  If $p_1p_2 \in E(G)$ and $p_2p_3 \in A(D^*)$, where $p_1 \not= p_3$, then 
$\{c(p_1),c(p_2),c(p_3)\} = \{1,2,3\}$.  This implies that $c(p_1) = 6 - c(p_2) - c(p_3)$.}

{Furthermore,  if $R=r_1,r_2,r_3, \ldots, r_s$ is a directed path in $D$,
then  $c({r_{1}})=c({r_{4}})=c({r_{7}})= \ldots$ and $c({r_{2}})=c({r_{5}})=c({r_{8}})= \ldots$ and $c({r_{3}})=c({r_{6}})=c({r_{9}})= \ldots$.
}

\2

{\em Proof of Claim~B:}  As all edges in $G$, and therefore also arcs in $D^*$, go between different $V_i$-sets, we note that
$c({p_1})\not= c({p_2})$ and $c({p_2}) \not= c({p_3})$. 
As $p_2p_3 \in A(D^*)$ we have $s(p_2) = p_3$, which implies that there is only one edge 
from $p_2$ to $V_{c({p_3})}$. Furthermore this edge is $p_2p_3$. 
Therefore $p_1 \not\in V_{c({p_3})}$ as otherwise $p_2$ would have two edges to $V_{c({p_3})}$.
So, $c({p_1})\not= c({p_3})$. This implies that $c({p_1})$, $c({p_2})$ and $c({p_3})$ take on three distinct values, 
which completes the {proof of the first part of Claim~B.
The second part of Claim~B follows immediately from the first part.}

\2

{\bf Claim C:} {\em Let $e \in A_0$ and assume that $C$ is a cycle in $G^* +e$ containing $e$.
Then $|E(C)| = 0$ (mod 3). } 

\2

{
{\em Proof of Claim~C:} Let $C$ be a cycle in $G^* +e$ containing $e$, where $ e\in A_0$.
Let $P$ be the path $C-e$ and assume $P=p_1p_2p_3 \ldots p_l$. Note that $e= p_1 p_l$. 
Assume without loss of generality that $c(p_1)=1$ and $c({p_l})=2$.
We now consider the following two cases. 

\2

{\em Case C.1. $p_1 p_2 p_3 \ldots p_l$ or $p_l p_{l-1} p_{l-2} \ldots p_1$ is a directed path in $D^*$.}  

Assume without loss of generality that $p_1 p_2 p_3 \ldots p_l$ is a directed path in $D^*$. 
As $c({p_1})=1$ and $c({p_l})=2$, we note that $c({p_2})=3$ and $c({p_3})=2$, by Claim~B. 
Therefore, as $c({p_l})=2=c({p_3})$ we must have that $l$ is divisible by 3, by Claim~B, which completes the proof of Case~C.1.

\2

{\em Case C.2. $p_1 p_2 p_3 \ldots p_l$ and $p_l p_{l-1} p_{l-2} \ldots p_1$ are not directed paths in $D^*$.}

As $\Delta^+(D^*) \leq 1$ this implies that
there exists a $p_i \in \{p_2,p_3,\ldots,p_{l-1}\}$ such that $p_i$ has two in-neighbors in $P.$
Let $P'=p_1 p_2 \ldots p_i$ and let $P''=p_l p_{l-1} p_{l-2} \ldots p_i$ and note that $P'$ and $P''$ are both directed paths in $D^*$.
As $c({p_1})=1$ and $c({p_l})=2$ we note that, by Claim~B, $c({p_{l-1}})=c({p_2})=3$. 

If $c({p_i})=1$ then $|E(P')| = 0$ (mod 3) and $|E(P'')| = 2$ (mod 3), implying that $|E(C)| = 0$ (mod 3) as desired.
If $c({p_i})=2$ then $|E(P')| = 2$ (mod 3) and $|E(P'')| = 0$ (mod 3), implying that $|E(C)| = 0$ (mod 3) as desired.
And finally if $c({p_i})=3$ then $|E(P')| = 1$ (mod 3) and $|E(P'')| = 1$ (mod 3) again implying that $|E(C)| = 0$ (mod 3) as desired.
This completes the proof for Case~C.2.
}


\2

{\bf Claim D:} {\em If $C$ is a cycle in $G^*$ then $|E(C)| = 0$ (mod 3) and $C$ contains no chord in $G$. }

\2

{\em Proof of Claim~D:} Let $C = p_1 p_2 p_3 \ldots p_l p_1$ be a cycle of length $l$ in $G^*$.
As $\Delta^+(D^*) \leq 1$ we note that $C$ is a directed cycle in $D^*$. Without loss of generality 
assume that $c({p_1})=1$ and $c({p_2})=2$ (otherwise rename the $V_i$'s). By Claim~B we note that 
$c({p_l})=3$ (due to the path $p_l p_1 p_2$) and $c({p_3})=3$ (due to the path $p_1 p_2 p_3$).
Continuing using Claim~B we note that the following holds.
\begin{eqnarray*}
c({p_l})&=&3 \mbox{, }  c({p_1})=1 \mbox{, } c({p_2})=2 \mbox{, } c({p_3})=3 \mbox{, }\\
c({p_4})&=&1 \mbox{, }  c({p_5})=2 \mbox{, } c({p_6})=3 \mbox{, } c({p_7})=1 \mbox{, } \ldots 
\end{eqnarray*}
We see that $c({p_j})=3$ if and only if $j$ is divisible by three, which implies 
that $l$ is divisible by three (as $c({p_l})=3$).
Therefore  $|E(C)| = 0$ (mod 3).

For the sake of contradiction assume that $C$ has a chord, $p_i p_j$ in $G$, where $i<j$.
Consider the two cycles 
$$C_1 = p_i p_{i+1} \ldots p_j p_i \mbox{ and }
C_2 = p_j p_{j+1} \ldots p_l p_1 p_2 \ldots p_i p_j$$
By Claim~C we note that $|E(C_1)|$ and $|E(C_2)|$ are both divisible by three.
However $|E(C_1)|+|E(C_2)| = |E(C)|+2$ (as the edge $p_i p_j$ is counted twice).
This is a contradiction as $|E(C_1)|+|E(C_2)|$ is divisible by three, but $|E(C)|+2$ is not 
(as $|E(C)|$ is divisible by three, by our above arguments),
which completes the proof of Claim~D.

\2





{\bf Claim E:} {\em If $P$ is a $(u,x)$-path in $D^*,$ $Q$ is a $(v,x)$-path in $D^*$ and there exists
an arc $xy \in D^*$, 
such that $|A(P)|,|A(Q)| \geq 1$ and $yx$ is not an arc on either $P$ or $Q$}, then $uv \not\in E(G)$.

\2

{\em Proof of Claim~E:} Let $P=p_1p_2 \ldots p_a$  ($a \geq 2$) and $Q=q_1 q_2  \ldots q_b$ ($b \geq 2$), 
where $p_1=u$, $q_1=v$ and $p_a=q_b=x$ and let $xy \in A(D^*)$ be defined as in the statement of Claim~E. 

{
Without loss of generality assume that $c({x} )=2$ and $c(y)=3$. 
Using Claim~B on the paths $Py$ and $Qy$ we note that 
all arcs in $P$ and in $Q$ go from $V_1$ to $V_2$ or from $V_2$ to $V_3$ or from $V_3$ to $V_1$ in $D^*$.
}

Assume for the sake of contradiction that $uv \in E(G)$ (i.e. $p_1q_1 \in E(G)$). As $p_1p_2,q_1q_2 \in A(D^*)$ and $p_1q_1 \in E(G)$,
Claim~B implies the following:
\[
c({p_2} )= 6 - c({p_1}) - c({q_1}) = c({q_2})
\]
As all arcs in $P$ and in $Q$ go from $V_1$ to $V_2$ or from $V_2$ to $V_3$ or from $V_3$ to $V_1$ in $D^*$ this implies that
$c({p_1}) = c({q_1})$ (as $c({p_2}) = c({q_2})$), a contradiction to $p_1 q_1 \in E(G)$.
This completes the proof of Claim~E.

\2

{\bf Claim~F:} {\em ${\rm mac}(G) \geq \frac{1}{2} w(A_0) + \frac{7}{8} w(A_1) + w(A_2)$.}

\2

{\em Proof of Claim~F:} {For a set $U$ of vertices of $G$, let $G[U]$ denote the subgraph of $G$ induced by $U.$} Let $G_1^*,G_2^*,\ldots,G_r^*$ denote the connected components in $G^*$
and let $D_1^*,D_2^*, \ldots, D_r^*$ be the maximal subgraphs in $D^*$ such that $UG(D_i^*)=G_i^*$.
Let $F_i$ denote all edges in $E(G) \setminus E(G^*)$ with both endpoints in $V(G_i^*)$.
{Note that the edge set of $G[V(G_i^*)]$ is exactly $E(G_i^*) \cup E(F_i)$.
Furthermore note} that each $D_i^*$ is either an in-tree (an {\em  in-tree} is an oriented tree, where exactly one vertex, the root, has out-degree 0
and all other vertices have out-degree 1) or $D_i^*$ contains one directed cycle (possibly a 2-cycle) 
and all vertices have out-degree one in $D_i^*$.
We now consider the following cases (for $i=1,2,\ldots,r$):

\2

{\em Case F.1. $D_i^*$ is an in-tree.} 
If $f \in F_i$ then by Claim~C we note that any cycle in $G_i^* + f$ has length divisible by three.
As $G$ is triangle-free this implies that any odd cycle in $G_i^* + f$ has length at least nine.
Lemma~\ref{girth2} (with $k=4$) now implies the following.

\begin{equation}
\begin{array}{rcl}
{\rm mac}(G[V(G_i^*)]) & \geq & \frac{1}{2} (w(F_i)+w(G_i^*)) +  \frac{k-1}{2k} w(G_i^*)  \\
                       & \geq & \frac{1}{2} w(F_i) +  \frac{7}{8} w(G_i^*)  \\
\end{array}  \label{Eq:G1}
\end{equation}

\2

{\em Case F.2. $D_i^*$ contains a directed $2$-cycle.}  Let $C = c_1 c_2 c_1$ be the cycle in $D_i^*$.
As in the proof of Case~F.1 we note that if $f \in F_i$ then, by Claim~C, any cycle in $G_i^* + f$ has length divisible by three.
As $G$ is triangle-free this again implies that any odd cycle in $G_i^* + f$ has length at least nine.
Lemma~\ref{girth2} (with $k=4$) now implies the following, where we let $e^*$ in Lemma~\ref{girth2} be the edge $c_1 c_2$.

\begin{equation}
\begin{array}{rcl}
{\rm mac}(G[V(G_i^*)]) & \geq & \frac{1}{2} (w(F_i)+w(G_i^*)) +  \frac{k-1}{2k} w(G_i^*) + \frac{1}{2k} w(c_1c_2) \\
                       & \geq & \frac{1}{2} w(F_i) +  \frac{7}{8} w(G_i^*) + \frac{1}{8} w(c_1c_2) \\
\end{array}  \label{Eq:G2}
\end{equation}

\2

{\em Case F.3. $D_i^*$ contains a directed cycle of length greater than two. }
Let $C = c_1 c_2 \ldots c_l c_1$ be the cycle in $D_i^*$.
Note that by Claim~D, $l$ is divisible by 3 and $C$ contains no chord in $G$.
Note that every $u \in V(D_i^*) \setminus V(C)$ has a unique directed path from $u$ to $V(C)$
(as $G_i^*$ is connected and $\Delta^+(D_i^*) \leq 1$).
Let { ${\rm pe}(u)=c_j$} if and only if the unique path from $u$ to $V(C)$ ends in $c_j$.
Define $C_j$ as follows {(see Figure~\ref{ClaimF3fig} for an illustration).}
{
\[
C_j = \{ v \; | v \in V(D_i^*) \setminus V(C) \; \& \; {\rm pe}(v)=c_j \} \cup \{c_j\}
\]
}

\begin{figure}[hbtp]
\begin{center}
\tikzstyle{vertexDOT}=[scale=0.25,circle,draw,fill]
\tikzstyle{vertexY}=[circle,draw, top color=gray!5, bottom color=gray!30, minimum size=11pt, scale=0.8, inner sep=0.99pt]
\tikzstyle{vertexZ}=[circle,draw, top color=gray!5, bottom color=gray!30, minimum size=11pt, scale=0.5, inner sep=0.49pt]
\tikzstyle{vertexW}=[circle,draw, top color=gray!5, bottom color=gray!30, minimum size=11pt, scale=1.0, inner sep=1.1pt]

\begin{tikzpicture}[scale=0.5]
\node (ca) at (-2,2) [vertexY] {$c_1$};
\node (cb) at (2,2) [vertexY] {$c_2$};
\node (cc) at (5.5,-2) [vertexY] {$c_3$};
\node (cd) at (2.5,-2) [vertexY] {$c_4$};
\node (ce) at (-0.5,-2) [vertexY] {$c_5$};
\node (cf) at (-3.5,-2) [vertexY] {$c_6$};

\draw [->,line width=0.07cm] (ca) -- (cb);
\draw [->,line width=0.07cm] (cb) -- (cc);
\draw [->,line width=0.07cm] (cc) -- (cd);
\draw [->,line width=0.07cm] (cd) -- (ce);
\draw [->,line width=0.07cm] (ce) -- (cf);
\draw [->,line width=0.07cm] (cf) -- (ca);

\node (a1) at (-4,2) [vertexZ] {};
\node (a2) at (-6,2) [vertexZ] {};
\node (a3) at (-4,4) [vertexZ] {};
\node (a4) at (-2,4) [vertexZ] {};
\draw [->,line width=0.02cm] (a1) -- (ca);
\draw [->,line width=0.02cm] (a2) -- (a1);
\draw [->,line width=0.02cm] (a3) -- (a1);
\draw [->,line width=0.02cm] (a4) -- (ca);
\draw [rounded corners] (-7,1) rectangle (-1,5);   
\node [scale=1.2] at (-8,3) {$C_1$};

\node (b1) at (4,2) [vertexZ] {};
\node (b2) at (6,2) [vertexZ] {};
\node (b3) at (4,4) [vertexZ] {};
\draw [->,line width=0.02cm] (b1) -- (cb);
\draw [->,line width=0.02cm] (b2) -- (b1);
\draw [->,line width=0.02cm] (b3) -- (b1);
\draw [rounded corners] (1,1) rectangle (7,5);   
\node [scale=1.2] at (8,3) {$C_2$};

\draw [rounded corners] (4.5,-3) rectangle (6.5,-1);   
\node [scale=1.2] at (7.5,-2) {$C_3$};

\draw [rounded corners] (1.5,-3) rectangle (3.5,-1);   
\node [scale=1.2] at (2.5,-0.4) {$C_4$};

\draw [rounded corners] (-1.5,-3) rectangle (0.5,-1);   
\node [scale=1.2] at (-0.5,-0.4) {$C_5$};

\node (f1) at (-5.5,-2) [vertexZ] {};
\draw [->,line width=0.02cm] (f1) -- (cf);
\draw [rounded corners] (-6.5,-3) rectangle (-2.5,-1);   
\node [scale=1.2] at (-7.5,-2) {$C_6$};

\end{tikzpicture}
\end{center}
\caption{An illustration of the sets $C_i$'s in Claim~F.3.}
\label{ClaimF3fig}
\end{figure}
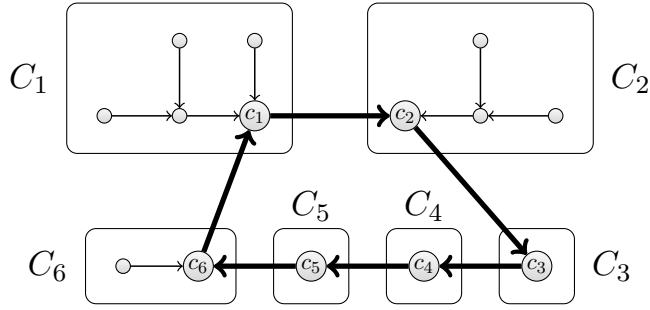

We will now show that if $u \in C_j$ and $v \in C_k,$ where $j \not= k$,  then $uv \not\in E(G) \setminus E(C)$.
For the sake of contradiction assume that $uv \in E(G) \setminus E(C)$. As $uv \not\in E(C)$ and $C$ has no chords in $G$
we note that $u \not= c_j$ or $v \not= c_k$. Without loss of generality assume that $u \not= c_j$. Let $P$ be the unique path
from $u$ to $c_j$ in $D_i^*$ and let $Q$ be the unique path from $v$ to $c_k$ followed by the path from $c_k$ to $c_j$ using the arcs of 
$C$. As $c_j c_{j+1} \in D_i^*$ (and $c_{j+1}c_j$ is not an arc in $D_i^*$, as $l \geq 3$, {and therefore also not an arc on $A(P) \cup A(Q)$}), Claim~E implies that $uv \not\in E(G)$, a contradiction.  Therefore $uv \not\in E(G) \setminus E(C)$,
as desired.

Also, $G_i^*[C_j]$ is a tree (possibly containing only one vertex) for all $j$. 
Analogously to Case~F.1 we note that, for every $f \in F_i$, any odd cycle in $G_i^*[C_j] + f$ has length at least nine.
Lemma~\ref{girth2} (with $k=4$) now implies the following:
\[
\begin{array}{rcl}
{\rm mac}(G[C_j]) & \geq & \frac{1}{2} (w(G^*_i[C_j]) + w(F_i \cap E(G[C_j])))  + \frac{k-1}{2k} w(G^*[C_j])  \\
                  & \geq & \frac{1}{2} w(F_i \cap E(G[C_j]))  + \frac{7}{8} w(G^*[C_j])  \\
\end{array}
\]

Recall that either $l$ is even or $l \geq 9$.
In this case picking an optimal weighted cut in each $C_j$ and adding all edges of $C$ if $|E(C)|$ is even or
all edges of $C$ except the cheapest one if $|E(C)|$ is odd, we obtain the following (as if $|E(C)|$ is odd then
$|E(C)| \geq 9$).
\[
{\rm mac}(G[V(G_i^*)])  \geq \frac{8}{9} w(C) + \frac{1}{2} w(F_i) + \frac{7}{8} w(E(G_i^*)\setminus E(C))\\
\]
As $8/9 > 7/8$ this implies the following: 

\begin{equation}
{\rm mac}(G[V(G_i^*)])  \geq  \frac{1}{2} w(F_i) + \frac{7}{8} w(G_i^*) \label{Eq:G3}
\end{equation}

This completes Case~F.3.

\2

{Note that every edge in $A_2 \cap E(G_i^*)$ is considered in Case~F.2 above and in this case Inequality~(\ref{Eq:G2}) holds.
Therefore any edge in $A_2 \cap E(G_i^*)$ is counted $\frac{7}{8} + \frac{1}{8}$ times. 
Now combining Inequality~(\ref{Eq:G1}),  Inequality~(\ref{Eq:G2}) and Inequality~(\ref{Eq:G3})
 we obtain the following.}

\[
{\rm mac}(G[V(G_i^*)])  \geq w(A_2 \cap E(G_i^*)) + \frac{1}{2} w(F_i) + \frac{7}{8}w(E(G_i^*) \setminus A_2)
\]

Let $(X_i,Y_i)$ be a maximum weight cut of $G_i^*$ and for
each $i \in [r],$ randomly and uniformly assign $X_i$ color 1 or 2 and $Y_i$ the opposite color.
Let $A$ be all vertices of color 1 and let $B$ be all vertices of color 2.
Now every edge in $(X_i,Y_i)$ lies in the cut induced by $(A,B)$ and every edge between different $G_i^*$'s lies in the cut induced by $(A,B)$
with probability 1/2. 
{Let $W$ be the weight of all edges between different $G_i^*$'s and note that the average weight of the cut $(A,B)$ is as follows.

\[
\frac{W}{2} + \sum_{i=1}^r {\rm mac}(G[V(G_i^*)])        
\]

If $e \in E(A_0)$ then $e$ either belongs to some $F_i$ or is an edge between different $G_i^*$'s (and in this case is counted in $W$), while
if $e \in E(A_1) \cup E(A_2)$ then $e$ belongs to some $G_i^*$. Therefore the following holds (as ${\rm mac}(G)$ is greater than or equal to the 
average weight of the cut $(A,B)$).
}

\[
{\rm mac}(G) \geq \frac{1}{2} w(A_0) + \frac{7}{8} w(A_1) + w(A_2)
\]

This completes the proof of Claim F.

\2

{\bf Claim~G:} {\em  ${\rm mac}(G) \geq \frac{3}{5} w(A_0) + \frac{3}{5} w(A_1) + w(A_2)$.}

\2

{\em Proof of Claim~G:} As $\Delta^+(D^*) \leq 1$ we note that all edges in $A_2$ form 
a matching in $G$ (i.e. they have no endpoints in common). 
By Lemma~\ref{mainMatch} (with $\Delta=3$ and $M=A_2$) 
we note that ${\rm mac}(G) \ge \frac{3}{5}(w(G) - w(A_2)) + w(A_2)$.
As $w(G)-w(A_2)=w(A_0)+w(A_1)$ this implies Claim~G.

\2

We now return to the proof of Theorem~\ref{mainProb}. By Claims~A, F and G, the following three inequalities hold.

\begin{description}
\item[(1)] ${\rm mac}(G) \ge w(A_0) + \frac{2}{3} w(A_1) + \frac{1}{3} w(A_2)$.
\item[(2)] ${\rm mac}(G) \geq \frac{1}{2} w(A_0) + \frac{7}{8} w(A_1) + w(A_2)$.
\item[(3)] ${\rm mac}(G) \geq \frac{3}{5} w(A_0) + \frac{3}{5} w(A_1) + w(A_2)$.
\end{description}

Taking $\frac{9}{22}$  times inequality (1) plus $\frac{8}{22}$  times  inequality (2) plus $\frac{5}{22}$  times inequality (3),
implies the following:
\[
\begin{array}{rcl}  \2
{\rm mac}(G) & \geq &  \left( \frac{9}{22} + \frac{8\cdot 1}{22 \cdot 2} + \frac{5 \cdot 3}{22 \cdot 5} \right) w(A_0) 
                     + \left( \frac{9\cdot 2}{22 \cdot 3} + \frac{8\cdot 7}{22 \cdot 8} + \frac{5 \cdot 3}{22 \cdot 5} \right) w(A_1) \\ \2 \2
            &  &         + \left( \frac{9\cdot 1}{22 \cdot 3} + \frac{8}{22} + \frac{5}{22} \right) w(A_2) \\
& = & \frac{16}{22} (w(A_0) + w(A_1) + w(A_2)) = \frac{8}{11} \cdot w(G) \\

\end{array}
\]
\end{proof}

Recall the statement of Theorem~\ref{mainProbTree}.

\2

\noindent
{\bf Theorem~\ref{mainProbTree}.} {\em Let $G$ be an edge-weighted triangle-free graph with $\Delta(G) \leq 3$
and let $T$ be an arbitrary spanning tree in $G$.
Then ${\rm mac}(G) \geq \frac{w(G)}{2} + 0.3193 \cdot w(T)$.
}

\begin{proof}
  By Lemma~\ref{lemmaProb}, with $p=0.85$ and $r=5$ and Theorem~\ref{mainProb}, we obtain the following inequalities.
\[
\begin{array}{crcl}
 \mbox{{\bf (a)}} & {\rm mac}(G) & \geq &  \frac{p+1}{2} w(T) + \frac{1-p^{r-1}}{2} (w(G)-w(T))  \\
                   &              & \geq & 0.925 \cdot w(T) + 0.23899687 (w(G)-w(T))\\
& & & \\
 \mbox{{\bf (b)}} & {\rm mac}(G) & \geq & \frac{8}{11} \cdot w(T) + \frac{8}{11} (w(G)-w(T)) \\
\end{array}
\]
Taking $0.46545$ times inequality (a) plus $0.53455$ times inequality (b) gives us the following inequality, which completes
the proof.
\[
{\rm mac}(G) \geq 0.8193 \cdot w(T) + \frac{w(G)-w(T)}{2}.
\]
\end{proof}

\section{Conclusion}\label{sec:c}

{In this paper, we study lower bounds of the maximum weight ${\rm mac}(G)$ of a cut in a weighted graph $G.$ We obtain lower bound for arbitrary graphs, graphs of bounded girth, and triangle-free graphs of bounded maximum degree. 
We posed a number of conjectures and an open problem. We conjecture that if $G$ is a weighted triangle-free graph and $T$ is a spanning tree of $G$, then ${\rm mac}(G)\ge \frac{w(G)}{2} + \frac{3w(T)}{8}.$
We also conjecture that ${\rm mac}(G)\ge 4w(G)/5$ for a weighted triangle-free subcubic graph $G$. Bondy and Locke \cite{BoLO86} proved  that the last conjecture holds for unweighted triangle-free subcubic graphs, in {other} words for weighted 
triangle-free subcubic graphs where each edge has the same weight.  The following conjecture is related to the main topic of the paper:
every triangle-free subcubic graph $G$ contains a set $E'$ of edges, such that every 5-cycle in $G$ contains exactly one edge from $E'.$}

\vspace{2mm}

\noindent {\bf Acknowledgement} We are grateful to Noga Alon for suggesting Theorem \ref{Shearer} and to the referees for numerous suggestions which significantly improved the presentation.

\end{document}